	\numberwithin{equation}{section}
\def\e{\varepsilon}
\def\R{{\mathbb R}}
\def\C{{\mathbb C}}
\def\N{{\mathbb N}}
\def\Z{{\mathbb Z}}
\def\d{\partial}
\def\a{\alpha}
\def\EE{{\mathbb E}}
\def\g{\gamma}
\def\u{{\mathbf u}}
\def\v{{\mathbf v}}
\def\w{{\mathbf w}}
\def\f{{\mathbf f}}
\def\h{{\mathbf h}}
\def\H{{\mathbf H}}
\def\A{{\mathbf A}}
\def\F{{\mathbf F}}
\def\RR{{\mathbf R}}
\def\dsp{\displaystyle}
\def\be{\begin{equation}}
\def\ee{\end{equation}}
\def\Id{{\rm Id}}
\def\und{\underline}
\begin{document}

\theoremstyle{plain}
\newtheorem{theo}{Theorem}[section]

\theoremstyle{plain}
\newtheorem*{theo*}{Theorem}

\theoremstyle{plain} 
\newtheorem{defi}[theo]{Definition}

\theoremstyle{plain}
\newtheorem{prop}[theo]{Proposition}

\theoremstyle{plain}
\newtheorem{coro}[theo]{Corollary}

\theoremstyle{plain}
\newtheorem{lemma}[theo]{Lemma}

\theoremstyle{definition}
\newtheorem*{condi}{Condition}

\theoremstyle{definition}
\newtheorem{notation}[theo]{Notation}

\theoremstyle{plain} 
\newtheorem{remark}[theo]{Remark}

\theoremstyle{plain} 
\newtheorem{hypo}[theo]{Assumption}

\title{On hyperbolicity and Gevrey well-posedness. \\ Part two: Scalar or degenerate transitions}

\author{Baptiste Morisse \thanks{School of Mathematics, Cardiff University - \href{mailto:morisseb@cardiff.ac.uk}{morisseb@cardiff.ac.uk}. The author is supported by the EPSRC grant "Quantitative Estimates in Spectral Theory and Their Complexity" (EP/N020154/1). 
The author thanks his PhD advisor Benjamin Texier for all the remarks on this work, Jeffrey Rauch for interesting discussions and Jean-François Coulombel for his careful reading of a previous version.}
}
\date{\today}

\maketitle

\begin{abstract} 

For first-order quasi-linear systems of partial differential equations, we formulate an assumption of a transition from initial hyperbolicity to ellipticity. This assumption bears on the principal symbol of the first-order operator. Under such an assumption, we prove a strong Hadamard instability for the associated Cauchy problem, namely an instantaneous defect of Hölder continuity of the flow from $G^{\sigma}$ to $L^2$, with $0 < \sigma < \sigma_0$, the limiting Gevrey index $\sigma_0$ depending on the nature of the transition. We restrict here to scalar transitions, and non-scalar transitions in which the boundary of the hyperbolic zone satisfies a flatness condition. As in our previous work for initially elliptic Cauchy problems [B. Morisse, \textit{On hyperbolicity and Gevrey well-posedness. Part one: the elliptic case}, arXiv:1611.07225], the instability follows from a long-time Cauchy-Kovalevskaya construction for highly oscillating solutions. This extends recent work of N. Lerner, T. Nguyen, and B. Texier [\textit{The onset of instability in first-order systems}, to appear in J. Eur. Math. Soc.].

\end{abstract}


\newpage


\setcounter{tocdepth}{2}
\tableofcontents




\section{Introduction}

We consider the following Cauchy problem, for first-order quasi-linear systems of partial differential equations:
\be
	\label{Cauchy}
	\d_{t}u = \sum_{j=1}^{d} A_{j}(t,x,u) \d_{x_j}u + f(t,x,u) \;,\qquad u(0,x) = h(x) .
\ee

\noindent The system is of size $N$, that is $u(t,x)$ and $f(t,x,u)$ are in $\R^N$ and the $A_j(t,x,u)\in\R^{N\times N}$. The time $t$ is nonnegative, and $x$ is in $\R^{d}$. We assume throughout the paper that the $A_j$ and $f$ are analytic in a neighborhood of some point $(0,x_0,u_0)\in\R_{t}\times\R^{d}_{x}\times\R^{N}_{u}$. 

Under assumptions of weak defects of hyperbolicity for the first-order operator, we prove ill-posedness of \eqref{Cauchy} in Gevrey spaces. Weak defect of hyperbolicity is here understood as a transition from hyperbolicity of the principal symbol at initial time, to ellipticity of the principal symbol for later times. Our results extend Métivier's ill-posedness theorem in Sobolev spaces for initially elliptic operators \cite{metivier2005remarks}, our own ill-posedness result in Gevrey spaces for initially elliptic operators \cite{morisse2016I}, Lerner, Nguyen and Texier's theorem on systems transitioning from hyperbolicity to ellipticity \cite{lerner2015onset}, and echo Lu's construction of WKB profiles \cite{lu2016resonances} which are destabilized by terms not present in the initial data. 

Our proofs use Métivier's method developed in \cite{metivier2005remarks} based on majoring series, hence the assumption of analyticity for the $A_j$ and $f$. Our assumptions of weak defects of hyperbolicity mean that the operator in \eqref{Cauchy} experiences a transition in time from hyperbolicity to non hyperbolicity. The transition is possibly not uniform in space.  Our assumptions bearing on the principal symbol, and the associated normal forms, are presented in Section \ref{section.assumptions}. Our results are Theorems \ref{theorem.smooth} and \ref{theorem.airy}, stated in Section \ref{subsection.results}. The proofs comprise Sections \ref{section.ansatz} to \ref{section.existence}.

In the companion paper \cite{morisse2016IIz}, we consider the case of genuinely non-scalar transitions.


\subsection{Background}
\subsubsection{A long-time Cauchy-Kovaleveskaya result for elliptic Cauchy problems}
\label{subsubsection.metivier}


Our article \cite{morisse2016I} contains a long-time Cauchy-Kovalevskaya theorem, based on the paper \cite{metivier2005remarks} by Métivier, which proves an Hadamard instability result for initially elliptic quasi-linear systems in Gevrey spaces. Precisely, the result of \cite{morisse2016I} asserts that the flow associated to the Cauchy problem \eqref{Cauchy} fails to be Hölder from a highly regular $\sigma$-Gevrey space to the very lowly regular $L^2$ space, locally in the $x$ variable and for $\sigma$ less than a critical exponent $\sigma_0$ depending on initial spectrum, under the assumption of initial ellipticity for the first-order differential operator. 

Here initial ellipticity is understood as an initial defect of hyperbolicity. That is for some $(x_0,\vec{u}_0,\xi_0) \in \R_{x}^{d}\times\R_{u}^{N}\times\R_{\xi}^{d},$ the principal symbol at $(0,x_0,\vec{u}_0,\xi_0)$:
$$
	A_0 := \sum_{j=1}^{d} A_j(0,x_0,\vec{u}_0) \xi_{0,j}
$$

\noindent has at least one couple of non-real eigenvalues, with imaginary part $\pm i \g_0$ associated to eigenvectors $\vec{e}_{\pm}$. 

%
%
%


In \cite{morisse2016I} we posit in $(3.2)$ the ansatz $u_{\e}(t,x) = \e\u(t /\e , x , (x-x_0)\cdot \xi_0/\e)$, where $\u(s,x,\theta)$ is periodic in the $\theta$ variable. We transform then the Cauchy problem \eqref{Cauchy} into the equation
\be
	\label{intro.equation.ds}
	\d_{s} \u - \und{A}(\e s , x) \d_{\theta} \u  = G(\u)
\ee

\noindent for some non-linear remainder term $G(\u)$. The leading term $\und{A}(t,x)$ is here the principal symbol
\be
	\label{intro.def.und.A}
	\und{A}(t,x) = \sum_j A_j(t,x,\vec{u}_0) \xi_{0,j} .
\ee

\noindent The ellipticity condition is an open condition bearing on the principal symbol $\und{A}$. In particular, ellipticity at $(0,x_0)$ implies ellipticity around $(0,x_0)$. The proof of \cite{morisse2016I} introduces the propagator $U$ defined by 
\be
	\label{intro.equation.propa}
	\d_{s} U(s',s,x,\theta) - \und{A}(\e s, x) \d_{\theta} U(s',s,x,\theta) = 0 \quad , \quad U(s',s',x,\theta) = {\rm Id}.
\ee
	
\noindent By ellipticity, the propagator $U$ has an exponential growth. We introduce an appropriate Banach space of functions of $(s,x,\theta)$ which are analytical in the $x$ variable and whose Fourier coefficients in $\theta$ have an exponential growth which reflects the growth of the propagator. A fixed point argument shows existence and uniqueness, and exponential growth in this space, which implies the Hadamard instability.

The main issue in \cite{morisse2016I}, compared to the previous analysis of Metivier \cite{metivier2005remarks}, is that in Gevrey spaces, the Hadamard instability is recorded at much longer times than in Sobolev spaces. The instability is observed thanks to highly oscillating, well-polarized initial data, which generate solutions growing exponentially both in time and frequency. Observing an instability means that at some time, the $L^2$ norm of the solution is far greater (with respect to the frequency) than the Sobolev or Gevrey norm of the initial datum. Considering the fundamental oscillation $e^{i x \cdot \xi}$ with frequency $\xi$, a simple computation leads to the Sobolev norm $||e^{i x \cdot \xi}||_{H^m} \simeq |\xi|^{m}$, whether the Gevrey norm is $|| e^{ix\cdot\xi} ||_{\sigma,c,K} \simeq e^{ (\sigma c^{\sigma})^{-1}  |\xi|^{\sigma}} $ (see \eqref{def.norm.gevrey} below and Lemma 3.3 in \cite{morisse2016I}). Hence the observation of the instability is recorded as a much longer time in Gevrey spaces than in Sobolev spaces.

\subsubsection{Lerner, Morimoto and Xu's result on transition to ellipticity for scalar equations}


In \cite{lerner2010instability}, Lerner, Morimoto and Xu introduce the notion of transition to ellipticity for initially hyperbolic systems. A prototypical example is the Burgers equation with a complex forcing: $ \d_t u + u\d_{x}u = i $. In the case of real data, the principal symbol is initially hyperbolic. Due to the complex forcing, the principal symbol is elliptic for ulterior times. For general equations \eqref{Cauchy} (with $N = 1$: scalar equations), under bracket conditions generalizing the situation for Burgers with complex forcing, and describing a transition from hyperbolicity to ellipticity, the authors in \cite{lerner2010instability} prove a strong form of instability, namely that if local $C^2$ solutions exist, then the complement of the analytic wave-front set of the datum is not empty. In particular, if the bracket conditions are formulated at $(x_0,u_0)\in\R^{d}\times\R$, it is shown in \cite{lerner2010instability} that for any analytical datum $\und{h}$ such that $\und{h}(x_0) = u_0$ , there exists smooth initial data $h$ close to $\und{h}$ which do not generate local $C^2$ solutions, a result analogous to Lebeau's
theorem for Kelvin-Helmholtz \cite{lebeau2002regularite}. The proof of \cite{lerner2010instability} relies strongly on a representation of solutions based on the method of characteristics, specific to scalar equations, which was developed earlier in \cite{metivier1985uniqueness}.

%

\subsubsection{Lerner, Nguyen and Texier's result on transition to ellipticity for general systems}

In \cite{lerner2015onset}, Lerner, Nguyen and Texier extend the analysis of \cite{lerner2010instability} to systems \eqref{Cauchy}. The result of \cite{lerner2015onset} shows an instantaneous lack of Hölder well-posedness of the flow, with an arbitrarily large loss of derivatives, under appropriate assumptions of transition to ellipticity. The analysis of \cite{lerner2015onset} is based on the method of approximation of pseudo-differential flows introduced in \cite{texier2014approximations}. One key observation in \cite{lerner2015onset} is that for systems, many types of transitions may occur. The focus in \cite{lerner2015onset} is on genuinely non-scalar transitions (more about this specific point in Remark \ref{remark.deux}). For these, the propagator generically grows in time like the Airy function.


\subsubsection{Defect of hyperbolicity in Maxwell systems}

There is a strong analogy between the progression from \cite{metivier2005remarks} to our present results and recent results \cite{lu2015}, \cite{lu2016resonances} in geometric optics. In \cite{lu2015}, Lu and Texier study large-amplitude solutions to Maxwell-based systems in the small wavelength limit. They show that in appropriate coordinates, resonances in frequency correspond to points of weak hyperbolicity. Thus at the resonances, the subprincipal symbol plays a role in the stability analysis. Under a Levi condition, hyperbolicity is violated around the resonances, and WKB solutions do not approximate exact solutions issued from appropriate nearby initial data, no matter how precise the order of the WKB approximation. This result is somehow analogous to Métivier's initial ellipticity result. Following \cite{lu2015}, Lu studied in \cite{lu2016resonances} a situation in which WKB solutions are destabilized by terms which are not present in the initial data. That is, the Levi condition of \cite{lu2015} is satisfied initially, but higher-order harmonics of the WKB solutions, which are generated by the nonlinearities in the course of the propagation, are associated with higher-order resonances. For these resonances, the Levi conditions may not be satisfied, leading to instability. This framework is somehow similar to ours, with an instability which develops in time, starting from an initially hyperbolic situation.

\subsubsection{Ill-posedness results for hydrostatic Euler and related equations}

In \cite{MR3509003}, Han-Kwan and Nguyen study the hydrostatic Euler and some singular Vlasov equations through the point of view of an abstract PDE $\d_t U - \mathcal{L}U = \mathcal{Q}(u,u)$, where $\mathcal{L}$ and $\mathcal{Q}$ are (bi)linear non-local operators. For each of those equations, the corresponding operator $\mathcal{L}$ exhibits an unbounded unstable spectrum, similar to the notion of ellipticity used in the present paper. Inspired notably by the work \cite{metivier2005remarks} of Métivier, Han-Kwan and Nguyen develop an analytical framework in order to prove an Hadamard instability result in all Sobolev spaces. One main ingredient is, as in \cite{metivier2005remarks} and the present work, to prove the existence of a family of analytical solutions for the abstract PDE which carry the expected exponential growth in time (called \textit{loss of analyticity for the semi-group} in \cite{MR3509003}). 

%

\subsection{Overview of the paper}

Our assumptions are based on the framework set out in \cite{lerner2015onset}, the results of which we extend in two distinct ways: we prove existence of solutions up to the observation time at which the Hadamard instability is recorded, and we measure the deviation in Gevrey spaces.

We assume that for a specific frequency $\xi_0\in\R^{d}$ the linear part of the principal symbol at $u=\vec{u}_0\in\R^{N}$ defined by \eqref{intro.def.und.A} has a real spectrum at time $t=0$ while non real eigenvalues appear for $t>0$. In this sense the operator experiences a transition from initial hyperbolicity ($t = 0$, real eigenvalues) to eventual ellipticity ($t > 0$, non-real eigenvalues).

A sharp difference with the initially elliptic case lies in the normal forms of the operators. Indeed, the elliptic case is reducible to the case where $\und{A}$ is a triangular matrix with non real and conjugated diagonal entries. 

By contrast, transitions in time appear in many ways. There is not one single normal form. Section \ref{section.assumptions} will be devoted to the descriptions of such transitions in time and the associated normal forms for systems of size $N=2$. In particular, this paper focuses on two particular normal forms, described in the next paragraphs.

\subsubsection{The smoothly diagonalizable case}
\label{subsubsection.smooth}

Under Assumptions {\rm \ref{hypo.branching}}, {\rm \ref{hypo.smooth}} and {\rm \ref{hypo.semi-simple}} (see Proposition \ref{prop.normalform.smooth} below), there holds
$$ 
	\und{A}(t,x) \approx \und{A}^{{\rm S}}(t) :=
	\begin{pmatrix}
		0 & t \\
		- \g_0^2 t & 0
	\end{pmatrix}
$$

\noindent with $\g_0 > 0$. Here $\approx$ means equality up to higher order terms in the Taylor expansion in time and space, and up to a change of basis. The matrix $\und{A}^{{\rm S}}(t)$ is smoothly diagonalisable in $\C$, with smooth eigenvalues $ \pm i \g_0 t $. This case is mostly scalar ; it is analogous to a degenerate Cauchy-Riemann problem. 

Our analysis shows that our method in \cite{morisse2016I} is robust enough to allow for such a weak defect of hyperbolicity. We replace ansatz $(3.2)$ therein by $u_{\e}(t,x) = \u(t /\e^{1/2} , x , (x-x_0)\cdot \xi_0/\e)$. For such $\und{A}^{{\rm S}}(t)$, the growth for the associated propagator solving
$$
	\d_{s} U^{{\rm S}}(s',s,\theta) - \und{A}^{{\rm S}}( s) \d_{\theta}U^{{\rm S}}(s',s,\theta)  = 0
$$

\noindent is like
\be
	\label{intro.growth.smooth}
	|U_n^{{\rm S}}(s',s)| \lesssim \exp\left(\int_{s'}^{s} \g_{{\rm S}}^{\sharp}(\tau) d\tau \right) \; , \quad \forall \, 0 \leq s' \leq s \; , \; \forall\,n\in\Z^{d}
\ee

\noindent for the Fourier coefficients of $U^{{\rm S}}(s',s,\theta)$, with $\g_{{\rm S}}^{\sharp}(\tau) = \g_0 \tau$.
	
\subsubsection{The degenerate Airy case}
\label{subsubsection.airy}

Under Assumptions {\rm \ref{hypo.branching}} and {\rm \ref{hypo.degenerate}} (see Proposition \ref{prop.normalform.stiff} below), there holds
$$
	\und{A}(t,x) \approx \und{A}^{{\rm Ai}}(t,x) :=
	\begin{pmatrix}
		0 & 1 \\
		-\g_0^2(t-t_{\star}(x)) & 0 
	\end{pmatrix}
$$

\noindent where $\approx$ means equality up to higher order terms in the Taylor expansion in time and space and $t_{\star}(x) \geq 0$ in a whole neighborhood of $x=x_0$\footnote{If there were $x_1$ such that $t_{\star}(x_1) <0$, we would be in the case of initial ellipticity and Métivier's result would apply.}. 

The time transition function $t_{\star}(x)$ defines the boundary between the elliptic and hyperbolic zones. Indeed, for $t < t_{\star}(x)$, the eigenvalues are $\pm \sqrt{t_{\star}(x)-t}$ while for $t > t_{\star}(x)$ the eigenvalues are $	\pm i\sqrt{t-t_{\star}(x)} $. 

The transition between hyperbolicity and ellipticity is thus not uniform in space, and depends on the space-dependent transition time $t_{\star}(x)$. In order to use and develop the method of \cite{morisse2016I}, we have to treat the transition time as a remainder term and verify its smallness in the framework. From that view, the non degenerate case $ t_{\star}(x) = O((x-x_0)^2)$ is out of reach of the method presented in this paper, and requires special attention - we devoted two companion papers \cite{morisse2016lemma} and \cite{morisse2016IIz} to the subject - more about this specific point in Remark \ref{remark.deux} below. We will focus here on the degenerate case
$$ 
	t_{\star}(x) = O((x-x_0)^4) .
$$

\noindent Note that the cases of odd power of $x$ are in contradiction with the assumption of non-negativity of $t_{\star}$ around $x=x_0$.

We emphasize also the fact that the eigenvalues of $\und{A}^{{\rm Ai}}$ are $C^0$ in time but not $C^1$, hence the stiffness of this case.

In this framework, we replace ansatz $(3.2)$ of \cite{morisse2016I} by $u_{\e}(t,x) = \u(t /\e^{2/3} , x , (x-x_0)\cdot \xi_0/\e)$. As such a transition is not semi-simple, the previous ansatz induces the following equation for the propagator
$$
	\d_{s} U^{{\rm Ai}}(s',s,\theta) - \e^{-1/3}\und{A}^{{\rm Ai}}(s,x_0) \d_{\theta} U^{{\rm S}}(s',s,\theta) = 0
$$

\noindent which as then a growth like
\be
	\label{intro.growth.airy}
	|U_n^{{\rm Ai}}(s',s)| \lesssim \e^{-1/3} \exp\left( \int_{s'}^{s} \g_{{\rm Ai}}^{\sharp}(\tau) d\tau \right) \; , \quad \forall \, 0 \leq s' \leq s \; , \; \forall\,n\in\Z^{d}
\ee

\noindent for the Fourier coefficients of $U^{{\rm Ai}}(s',s,\theta)$ with $\g_{{\rm Ai}}^{\sharp}(\tau) = \g_0 \tau^{1/2}$ which is typical of the Airy growth.

\begin{remark}
	\label{remark.deux}
	In {\rm \cite{lerner2015onset} }, the authors allow for generic non-scalar transitions, for which $t_{\star}(x) = O((x-x_0)^2) $. In particular, the space-time domain $\{(t,x) : (x-x_0)^2 \leq t\}$ is included in the domain of hyperbolicity. As we will see precisely in the course of the proof of Proposition {\rm \ref{prop.below.airy}}, in our context this space-time domain is too large for the standard Cauchy-Kovalevskaya theorem to apply. Thus, in the case $t_{\star}(x) = O((x-x_0)^2)$, we need a specific Gevrey well-posedness result in that space-time domain before observing the instability develop in the elliptic domain. This Gevrey well-posedness result is the object of the article {\rm \cite{morisse2016lemma}}, and the completion of the instability proof in the case $t_{\star}(x) = O((x-x_0)^2) $ is the object of the article {\rm \cite{morisse2016IIz}}.

\end{remark}

\subsubsection{Example: compressible Euler with Van der Waals pressure law}

Transitions of the principal symbol from hyperbolicity to ellipticity, as described in the above paragraphs, are observed in physical equations describing \textit{phase transitions}. One such system (mentioned in both \cite{metivier2005remarks} and \cite{lerner2015onset}) is the compressible Euler equations in one spatial dimension, with a Van der Waals pressure law:
\be
	\label{vdw} 
	\begin{cases}
		& \d_t u_1 + \d_{x}u_2 = 0  \\
		& \d_{t}u_2 + \d_{x}(p(u_1)) = 0
	\end{cases} 
\ee

\noindent where $p$ follows a Van der Waals equation of state, for which there holds $p'(u_1) \leq 0$, for some $u_1\in\R$. The system is hyperbolic (resp. elliptic) for $p'(u_1) >0$ (resp. for $p'(u_1) <0$). For solutions which leave the hyperbolic zone, a phase transition occurs. This corresponds for us to the catastrophic growth recorded in the elliptic zone. If for instance the elliptic zone is defined by $\{|u_1|\leq \delta\}$, for some $\delta>0$, then solutions may enter the elliptic zone only to leave it immediately, due to the exponential growth.


\section{Main assumptions and results}
\label{section.assumptions}


\subsection{Branching eigenvalues and defect of hyperbolicity}


We look at the possible cases of a defect of hyperbolicity, that is transitions from initial hyperbolicity to ellipticity at time $t>0$, following the work of Lerner, Nguyen and Texier of \cite{lerner2015onset}. 

We introduce first
\be
	\label{def.A}
	A(t,x,u) = \sum_{j} A_j(t,x,u) \xi_{0,j}.
\ee

\noindent We assume there are $(x_0,\vec{u}_0,\xi_0) \in\R^{d}\times\R^{N}\times\R^{d}$ and $r_0>0$ such that the principal symbol defined by
\be
	\label{def.A_0}
	\und{A}(t,x) = A(t,x,\vec{u}_0)
\ee

\noindent satisfies
\be
	\label{spectrum.transition.a}
		{\rm Sp}\left(\und{A}(0,x)\right) \subseteq \R \;, \quad \forall x\in B_{r_0}(x_0) 
\ee

\noindent which stands for initial and local hyperbolicity around $x_0\in\R^{d}$. Note that, as soon as there is some $x_1\in\R^{d}$ such that $\und{A}(0,x_1)$ has non real spectrum, we are in the case of initial ellipticity treated in \cite{morisse2016I}.

We assume also that, for small times $t>0$, there are some $x$ close to $x_0$ such that
\be
	\label{spectrum.transition.b}
	{\rm Sp}\left(\und{A}(t,x)\right) \not\subseteq \R \quad , \quad \forall \,t>0 .
\ee

\noindent Condition \eqref{spectrum.transition.a} stands for initial and local hyperbolicity around $x_0$ ; condition \eqref{spectrum.transition.b} expresses the ellipticity of $\und{A}$ at time $t>0$. Up to translations in $x$ and $u$, which do not affect our forthcoming assumptions, and by homogeneity in $\xi$, we may assume
\be
	\label{etc}
	x_0 = 0 \quad , \quad \vec{u}_0 = 0 \quad , \quad |\xi_0| = 1 .
\ee

Since the $A_j$ have real coefficients, non-real eigenvalues of $\und{A}(t,x)$ appear in conjugate pairs. For such a pair $\lambda_{\pm}(t,x)$, by reality of the eigenvalues at $t=0$ we have a double eigenvalue $\lambda_{-}(0,x) = \lambda_{+}(0,x) \in\R$ of $\und{A}(0,x)$. To avoid higher order transitions (which would involve eigenvalues of multiplicity $3$ or greater), we assume the eigenvalues of $\und{A}(0,0)$ to be distinct and simple, except for one double eigenvalue:

\begin{hypo}
	\label{hypo.1}
	We assume the eigenvalues of $\und{A}(0,0)$ to be distinct and simple, except for one double eigenvalue.
\end{hypo}

We block diagonalize the principal symbol into $\und{A}(t,x)^{(0)}$ and $\und{A}(t,x)^{(1)}$. The block $\und{A}(t,x)^{(0)}$ is a $2\times2$ matrix corresponding to the double eigenvalue, and the $(N-2)\times(N-2)$ block $\und{A}^{(1)}$ has simple real eigenvalues at $t=0$ in a whole neighborhood of $x=0$. Thanks to Assumption \ref{hypo.1} the block diagonalization is smooth. Therefore we focus our discussion on $\und{A}^{(0)}$, and we may assume $N=2$, that is $\und{A} \equiv \und{A}^{(0)}$. 

The question is now to describe the possible matrices $\und{A}(t,x)$ satisfying conditions \eqref{spectrum.transition.a} and \eqref{spectrum.transition.b}. Following \cite{lerner2015onset}, we reformulate the conditions \eqref{spectrum.transition.a} and \eqref{spectrum.transition.b} in terms of the characteristic polynomial of $\und{A}$ defined as
\be
	\label{defi.poly.char}
	P(\lambda,t,x) = {\rm det}\left(\lambda - \und{A}(t,x)\right)
\ee

\noindent which is simply in the case $N=2$
\be
	\label{poly.char.calc}
	P(\lambda,t,x) = \left(\lambda - \frac{1}{2}{\rm Tr}\und{A}(t,x)\right)^2 + \Delta(t,x) 
\ee

\noindent where we define
\be
	\label{def.Delta}
	\Delta(t,x) = {\rm det} \, \und{A}(t,x) - \left(\frac{1}{2}{\rm Tr}\und{A}(t,x)\right)^2 .
\ee

\noindent Thus the real or complex nature of the spectrum depends on the sign of $\Delta$. So condition \eqref{spectrum.transition.a} is equivalent in terms of $\Delta$ to
\be
	\label{Delta.initial.hyp}
	\Delta(0,x) \leq 0 \quad , \quad \forall \, x\in B_{r_0}(0) .
\ee

\noindent As a double eigenvalue $\lambda_{-}(0,x) = \lambda_{+}(0,x) \in\R$ of $\und{A}(0,x)$ corresponds to a double root of $P(\lambda,0,x)$, we formulate the following Assumption:

\begin{hypo}[Branching eigenvalues]
	\label{hypo.branching}
	In addition to {\rm \eqref{Delta.initial.hyp}}, we assume that there exists some $\lambda_0\in\R$ such that
	\be
		\label{double.root}
		P(\lambda_0,0,0) = 0 \quad , \quad \d_{\lambda}P(\lambda_0,0,0) = 0 .
	\ee
\end{hypo}

\begin{remark}
	Note that condition \eqref{double.root} is equivalent to
	$$ 
		\lambda_0 = \frac{1}{2}{\rm Tr}\und{A}(0,0) \quad , \quad \Delta(0,0)=0 .
	$$
\end{remark}

For condition \eqref{spectrum.transition.b} to be satisfied, that is for a conjugate pair of eigenvalues to appear as $t>0$, $\Delta(t,x)$ has to be positive for $t>0$. The eigenvalues of $\und{A}$, which are the zeroes of $P$, are then expressed by the square roots of $\Delta$. Even though the regularity of $\Delta$, being an algebraic combination of the coefficients of $\und{A}$, is analytic, the regularity of the square roots of $\Delta$ can be of course much weaker.  How much rougher than $\Delta$ may $\sqrt{\Delta}$ be has been studied in particular by Glaeser \cite{glaeser1963racine}. The question of the regularity of the eigenvalues and of the eigenvectors is here of importance as we work in the analytic framework:  we may not use non-smooth (in time and space) changes of basis, since the methods we use, following \cite{metivier2005remarks}, strongly rely on analyticity. In particular, we may not diagonalize the principal symbol if the eigenvectors are not smooth.



\subsection{The case of a smooth transition}
\label{subsection.smooth.assum}



For the square roots of $\Delta$ to be as smooth as $\Delta$, the discriminant $\Delta$ has to be the square of a smooth function $\delta(t,x)$:
$$ 
	\Delta(t,x) = \delta(t,x)^2 .
$$

\noindent In this case, note that $\Delta(0,x) = \delta(0,x)^2 \geq 0$. Since we assume also that $\Delta(0,x) \leq 0$ by \eqref{Delta.initial.hyp}, we get 
$$ \delta(0,x) = 0 \quad , \quad \forall x\in B_{r_0}(0) . $$

\noindent This is equivalent to the existence of some analytic function $\tilde{\delta}(t,x)$ such that 
$$ 
	\delta(t,x) = t\tilde{\delta}(t,x) .
$$ 

\noindent We sum up all this in the following

\begin{hypo}[Smooth transition]
	\label{hypo.smooth}
	There is a function $\delta(t,x)$ analytic in the $t$ and $x$ variables such that
	\be
		\label{smooth.a}
		\Delta(t,x) = \left(t\delta(t,x)\right)^2
	\ee
	\noindent with 
	\be
		\label{smooth.b}
		\delta(0,0) = \g_0 > 0 .
	\ee
\end{hypo}

Under Assumption \ref{hypo.smooth}, since $\Delta(0,x) \equiv 0$ the eigenvalues of $\und{A}(0,x)$ are the double eigenvalue $ \frac{1}{2}{\rm Tr}\und{A}(0,x)$. There are two cases\footnote{As opposed to the case of a stiff transition, described in Section \ref{subsection.stiff}, where $\und{A}(0,0)$ is not semi-simple.}, as $\und{A}(0,x)$ could be semi-simple or not. In what follows we add the assumption 

\begin{hypo}[Semi-simplicity]
	\label{hypo.semi-simple}
	The unique eigenvalue of $\und{A}(0,0)$ is semi-simple, for all $x$ near $x=0$.
\end{hypo}

\noindent This assumption is Hypothesis 1.5 in \cite{lerner2015onset}. We can now prove the following normal form result

\begin{prop}[Normal form for the smooth transition]
	\label{prop.normalform.smooth}
	Under Assumptions {\rm \ref{hypo.branching}}, {\rm \ref{hypo.smooth}} and {\rm \ref{hypo.semi-simple}}, there is an analytical change of basis $Q_0(t,x)\in\R^{2\times2}$ such that
	\be
		\label{normalform.smooth}
		Q_0^{-1}(t,x)\left(\und{A}(t,x) - \frac{1}{2}{\rm Tr}\und{A}(t,x)\,{\rm Id}\right)Q_0(t,x) = 
		\begin{pmatrix}
			0 & t \\
			-t\delta^2 & 0
		\end{pmatrix} .
	\ee

\end{prop}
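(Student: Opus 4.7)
The plan is to conjugate the trace-free part
$$ M(t,x,\xi) := \und{A}(t,x,\xi) - \tfrac{1}{2}{\rm Tr}\,\und{A}(t,x,\xi)\,{\rm Id}, $$
since any change of basis conjugating $M$ to the off-diagonal matrix in \eqref{normalform.smooth} automatically gives the full identity (the scalar part $\frac12{\rm Tr}\,\und{A}\,{\rm Id}$ is unchanged by conjugation). By construction ${\rm Tr}\, M = 0$, and Assumption \ref{hypo.smooth} gives $\det M = \Delta = (t\delta)^2$.

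First I would factor a $t$ out of $M$. At $t=0$, $M(0,x,\xi)$ is a $2\times 2$ trace-free matrix of determinant zero, so both its eigenvalues vanish; Assumption \ref{hypo.semi-simple} makes it semi-simple, and a semi-simple matrix with all eigenvalues zero must be the zero matrix. Thus $M(0,x,\xi) \equiv 0$ for $(x,\xi)$ near $(0,\xi_0)$, and analyticity in $t$ yields
$$ M(t,x,\xi) = t\,N(t,x,\xi), $$
where $N$ is real-analytic in $(t,x)$, linear in $\xi$, and satisfies ${\rm Tr}\, N = 0$, $\det N = \delta^2$. Cayley--Hamilton on the $2\times 2$ matrix $N$ then yields the key identity
$$ N^2 + \delta^2\,{\rm Id} = 0. $$

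Next I would put $N$ in companion form by an explicit choice of basis. Fix any constant $w_0 \in \R^2 \setminus \{0\}$ and set
$$ Q_0(t,x,\xi) := \bigl(\, N(t,x,\xi)\,w_0 \;\big|\; w_0 \,\bigr) \in \R^{2\times 2}, $$
i.e.\ the matrix with columns $N w_0$ and $w_0$. At the base point, $N(0,0,\xi_0)$ has eigenvalues $\pm i\g_0$ (since $\det N(0,0,\xi_0) = \g_0^2 > 0$ and ${\rm Tr}\, N = 0$), which are non-real, so no real vector is an eigenvector, and $w_0$, $N(0,0,\xi_0)w_0$ are linearly independent. Hence $Q_0$ is invertible on a neighborhood of $(0,0,\xi_0)$. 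Using $N^2 w_0 = -\delta^2 w_0$ one computes directly
$$ N Q_0 = \bigl(\, N^2 w_0 \;\big|\; N w_0 \,\bigr) = \bigl(\, -\delta^2 w_0 \;\big|\; N w_0 \,\bigr) = Q_0 \begin{pmatrix} 0 & 1 \\ -\delta^2 & 0 \end{pmatrix}, $$
and multiplying by $t$ gives exactly \eqref{normalform.smooth} after conjugating by $Q_0^{-1}$.

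The only delicate point is the vanishing of $M$ at $t=0$: it is precisely here that Assumption \ref{hypo.semi-simple} is invoked in an essential way, and this is what distinguishes the present \emph{smooth} setting from the stiff \emph{Airy} case of Section \ref{subsubsection.airy}, where $M(0,x,\xi)$ is nilpotent but non-zero and only one off-diagonal entry ends up carrying a factor of $t$.
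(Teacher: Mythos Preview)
Your proof is correct and follows essentially the same strategy as the paper: use semi-simplicity to factor the trace-free part as $M = tN$, then conjugate $N$ to the companion form $\begin{pmatrix} 0 & 1 \\ -\delta^2 & 0 \end{pmatrix}$. The paper writes out the entries $a_{ij} = t\,\widetilde{a_{ij}}$ explicitly and chooses the cyclic vector to be a standard basis vector (after checking one of the off-diagonal $\widetilde{a_{ij}}$ is nonzero at the base point), whereas you invoke Cayley--Hamilton directly and allow an arbitrary constant cyclic vector $w_0$, using the non-reality of the eigenvalues $\pm i\gamma_0$ to guarantee invertibility of $Q_0$; this is a slightly cleaner packaging of the same argument.
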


\begin{proof}

	We denote
	$$
		\begin{pmatrix}
			a_{11} & a_{12} \\
			a_{21} & -a_{11}
		\end{pmatrix}
		= \und{A}(t,x) - \frac{1}{2}{\rm Tr}\und{A}(t,x) {\rm Id} .
	$$

	\noindent By definition \eqref{def.Delta} there holds $	\Delta = -a_{11}^2 - a_{12}a_{21} $ and then, by \eqref{smooth.a} in Assumption \ref{hypo.smooth}
	\be
		\label{determinant}
		-a_{11}^2 - a_{12}a_{21} = t^2 \delta(t,x)^2  .
	\ee

	\noindent By Assumption \ref{hypo.semi-simple} the matrix $ (a_{ij})_{i,j} $ satisfies
	$$ 
		a_{11}(0,x) = a_{12}(0,x) = a_{21}(0,x) = 0 
	$$

	\noindent so that there are smooth functions $\widetilde{a_{ij}}(t,x)$ such that $ a_{ij}(t,x) = t\widetilde{a_{ij}}(t,x) $. Hence by \eqref{determinant} we get 
	$$
		\delta(t,x)^2 = -\widetilde{a_{11}}^2 - \widetilde{a_{12}}\widetilde{a_{21}} .
	$$
	
	\noindent As $\delta(0,0)^2 >0$ by Assumption \ref{hypo.smooth} (2), the term $\widetilde{a_{12}}\widetilde{a_{21}}(0,0)$ is non zero. Hence either one of $\widetilde{a_{12}}(0,0)$ or $\widetilde{a_{21}}(0,0)$ is non zero. In the first case, the matrix
	$$
		Q_0(t,x) = 
		\begin{pmatrix}
			\widetilde{a_{11}} & 1 \\
			\widetilde{a_{21}} & 0
		\end{pmatrix}
	$$

	\noindent is such that \eqref{normalform.smooth} holds. The second case is treated in the same way, which suffices to end the proof.
	
\end{proof}


\subsection{The case of a stiff transition}
\label{subsection.stiff}


If $\Delta$ is not the square of a function, its square roots are typically not as smooth as $\Delta$. In fact, for any $k\in\N$ it is possible to find $\Delta$ such that it is analytic, but its square roots are $C^{k}$ and not $C^{k+1}$. The first non degenerate case of this kind is when
\be
	\label{dt.Delta}
	\d_{t}\Delta(0,0) >0 
\ee

\noindent which implies that $\Delta(t,0)^{1/2} \sim t^{1/2} $ which is $C^{0}$ but not $C^{1}$ at $t=0$. With $\Delta(0,0) = 0$ by Assumption \ref{hypo.branching}, condition \eqref{dt.Delta} and the implicit function theorem give the existence of an analytic function $t_{\star}(x)$ such that
\be
	\label{def.t_star}
	\Delta(t,x) = 0 \Longleftrightarrow t = t_{\star}(x) \quad \text{locally around } (t,x)=(0,0) .
\ee

\noindent Introducing
$$
	e(t,x) = \int_{0}^{1}\d_{t}\Delta((1-\tau)t_{\star}(x) + \tau t,x) d\tau 
$$

\noindent there holds
\be
	\label{equality.Delta.tstar}
	\Delta(t,x) = \left(t - t_{\star}(x)\right)e(t,x) .
\ee

\noindent As $\Delta$ is analytic, $e$ is also analytic, and satisfies
$$
	e(0,0) = \d_{t}\Delta(0,0) >0
$$

\noindent so that $e$ is positive around $(0,0)$. Then the sign of $\Delta(t,x)$, hence the real or complex
nature of the spectrum of $\und{A}(t,x)$, is given by the sign of $t - t_{\star}(x)$, a situation comparable to the one described in Section 1.2.3 of \cite{lerner2015onset}: 
\begin{itemize}
	\item For $(t,x)$ under the transition curve $\{(t_{\star}(x),x)\}$ the eigenvalues of $\und{A}(t,x)$ are real.
	\item For $(t,x)$ above the transition curve, the eigenvalues of $\und{A}(t,x)$ have a non-zero imaginary part like $\pm i (t-t_{\star}(x))^{1/2}$.
\end{itemize}

\noindent The question is then to describe $t_{\star}$. First, as $\Delta(0,0) = 0$, 
\be
	\label{zero.t_star}
	t_{\star}(0) = 0.
\ee

\noindent As $\Delta(0,x) \leq 0$ for $x\in B_{r_0}(0)$, we have
$$
	t_{\star}(x) \geq 0 \quad , \quad \forall x\in B_{r_0}(0) 
$$

\noindent which implies
\be
	\label{zero.dx.t_star}
	\d_{x} t_{\star}(0) = 0 
\ee

\noindent so that the Taylor expansion of $t_{\star}(\cdot)$ around $x=0$ is as
$$
	t_{\star}(x) = \frac{1}{2} \sum_{j,k}\d_{x_j}\d_{x_k} t_{\star}(0) \,x_jx_k + O(x^3)
$$

\noindent and the Hessian $\left( \d_{x_j}\d_{x_k} t_{\star}(0) \right)_{j,k} $ is a nonnegative matrix. But as we will see in the course of the proof of Proposition \ref{prop.below.airy}, the non degenerate case $\left( \d_{x_j}\d_{x_k} t_{\star}(0) \right)_{j,k} \neq 0$ cannot be dealt with our method. We then assume
\be
	\d_{x_j}\d_{x_k} t_{\star}(0) = 0 \quad , \quad \forall \, j,k = 1 , \ldots , d .
\ee

\noindent Just as before, inequality \eqref{Delta.initial.hyp} implies that third order derivatives of $t_{\star}(\cdot)$ are null at $x=0$, and there holds
$$
	t_{\star}(x) = O(x^4) .
$$

In order to sum up those assumptions in a more intrinsic way, we express derivatives of $t_{\star}$ by derivatives of $\Delta$. By definition \eqref{def.t_star} of $t_{\star}$, there holds $	\Delta(t_{\star}(x),x) = 0 $ hence, differentiating with respect to $x$ and taking $x=0$:
$$
	\d_{x}t_{\star}(0) \,\d_{t}\Delta(0,0) + \d_{x}\Delta(0,0) = 0.
$$

\noindent As $\d_{t}\Delta(0,0) > 0$, equality \eqref{zero.dx.t_star} is then equivalent to
$$
	\d_{x}\Delta(0,0) = 0.
$$

\noindent By Faà di Bruno formula on iterate derivatives applied to the equality $	\Delta(t_{\star}(x),x) = 0 $, we may prove by induction that $t_{\star}(x) = O(x^4)$ is equivalent to the following

\begin{hypo}[Degenerate stiff transition]
	\label{hypo.degenerate}
	We assume 
	$$
		\d_{x}^{\a}\,\Delta(0,0) = 0 \quad , \quad \forall \,\a\in\N^{d} \text{ with } |\a| \leq 3 .
	$$
\end{hypo}

We prove now a normal form expression for $\und{A}(t,x)$:

\begin{prop}[Normal form for the stiff transition]
	\label{prop.normalform.stiff}
	Under Assumptions {\rm \ref{hypo.branching}} and {\rm \ref{hypo.degenerate}}, there are an analytical change of basis $Q_0(t,x)$ and real analytical functions $t_{\star}(x)$ and $e(t,x)$ such that
	\be
		\label{normalform.stiff}
		Q_0^{-1}\left(\und{A}(t,x) - \frac{1}{2}{\rm Tr}\und{A}(t,x)\,{\rm Id}\right)Q_0 = 
		\begin{pmatrix}
			0 & 1 \\
			-(t-t_{\star})e & 0
		\end{pmatrix} .
	\ee
\end{prop}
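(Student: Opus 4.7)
The overall plan is modelled on the proof of Proposition \ref{prop.normalform.smooth}: build an explicit analytic change of basis directly from the entries of the trace-free matrix $B(t,x,\xi) := \und{A}(t,x,\xi) - \frac{1}{2}{\rm Tr}\und{A}(t,x,\xi)\,{\rm Id}$. Compared to the smooth case two new features appear. First, the factorisation of the discriminant is no longer $\Delta = (t\delta)^2$ but, thanks to the non-degeneracy $\d_t \Delta(0,0,\xi_0) > 0$ which underlies the stiff setting (see the beginning of Section \ref{subsection.stiff}), the implicit function theorem produces analytic functions $t_\star(x,\xi)$ and $e(t,x,\xi)$, with $e(0,0,\xi_0) = \d_t\Delta(0,0,\xi_0) > 0$, such that $\Delta(t,x,\xi) = (t-t_\star(x,\xi))\,e(t,x,\xi)$. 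This step has already been carried out in the discussion preceding the statement.

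Writing
$$
B = \begin{pmatrix} a_{11} & a_{12} \\ a_{21} & -a_{11} \end{pmatrix}, \qquad -a_{11}^2 - a_{12}a_{21} = \Delta,
$$
the second new feature is that $B(0,0,\xi_0) \neq 0$. Indeed, this matrix is nilpotent (zero trace, and zero determinant since $\Delta(0,0,\xi_0)=0$); if it vanished, each $a_{ij}$ would vanish at $(0,0,\xi_0)$ and hence $\Delta = -a_{11}^2 - a_{12}a_{21}$ would vanish to second order there, contradicting $\d_t\Delta(0,0,\xi_0) > 0$. Since $B(0,0,\xi_0)$ is non-zero nilpotent with zero trace, its two off-diagonal entries cannot both vanish, so up to a constant linear basis swap I may assume $a_{12}(0,0,\xi_0) \neq 0$.

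I then set
$$
Q_0(t,x,\xi) = \begin{pmatrix} a_{12} & 0 \\ -a_{11} & 1 \end{pmatrix},
$$
which is invertible near $(0,0,\xi_0)$ since $\det Q_0 = a_{12}$. A direct calculation using $a_{11}^2 + a_{12}a_{21} = -\Delta$ then yields
$$
Q_0^{-1} B Q_0 = \begin{pmatrix} 0 & 1 \\ -\Delta & 0 \end{pmatrix} = \begin{pmatrix} 0 & 1 \\ -(t-t_\star)\,e & 0 \end{pmatrix},
$$
which is \eqref{normalform.stiff}. The main conceptual obstacle, and the point where the stiff case truly diverges from the smooth one, is the non-vanishing of $a_{12}$ at $(0,0,\xi_0)$, i.e.\ the non semi-simplicity of $B(0,0,\xi_0)$: this is available here precisely because of the stiffness $\d_t\Delta(0,0,\xi_0) > 0$, and would fail under Assumption \ref{hypo.semi-simple}.
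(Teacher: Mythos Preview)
Your proof is correct and follows essentially the same approach as the paper: both write the trace-free part $B$ with entries $a_{ij}$, use $\d_t\Delta(0,0,\xi_0)\neq 0$ to conclude that at least one off-diagonal entry is nonzero at $(0,0,\xi_0)$, and then build an explicit analytic $Q_0$ from these entries. The only cosmetic difference is that the paper assumes $a_{21}(0,0,\xi_0)\neq 0$ and takes $Q_0=\begin{pmatrix} a_{11} & 1 \\ a_{21} & 0 \end{pmatrix}$, whereas you assume $a_{12}(0,0,\xi_0)\neq 0$ and use the companion matrix $Q_0=\begin{pmatrix} a_{12} & 0 \\ -a_{11} & 1 \end{pmatrix}$; both conjugations yield the same normal form.
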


\begin{proof}

	\noindent By definition of $\Delta$ and denoting
	$$
		\und{A} - \frac{1}{2}{\rm Tr}\und{A} \,{\rm Id} = 
		\begin{pmatrix}
			a_{11} & a_{12} \\
			a_{21} & -a_{11}
		\end{pmatrix}
	$$

	\noindent we get $\Delta = -a_{11}^2 - a_{12}a_{21}$. As $\Delta \sim t$ both $a_{12}$ and $a_{21}$ cannot both be zero at $(0,0)$. Assuming that $a_{21}(0,0) \neq 0$, the matrix
	$$
		\label{Q.stiff}
		Q_0(t,x) =
		\begin{pmatrix}
			a_{11} & 1 \\
			a_{21} & 0
		\end{pmatrix}
	$$

	\noindent is an analytical change of basis such that \eqref{normalform.stiff} holds.

\end{proof}

\begin{remark}
	\label{remark.2}
	Note that, on the contrary of the normal form of the smooth transition given in Proposition {\rm \ref{prop.normalform.smooth}}, the normal form of the stiff transition is not semi-simple. This is of importance, as non-semisimplicity introduces an additional factor $\e^{-1/3}$ in the upper bound \eqref{growth.propa.airy} of the Airy propagator, to be compared with the upper bound \eqref{growth.propa.smooth} in the smooth case.
\end{remark}

We add the following assumption:

\begin{hypo}[Genuinely nonlinear zeroth-order perturbation]
	\label{hypo.structural}
	We assume that $f(t,x,u)$ is quadratic in $u$ locally around $u=\vec{u}_0$, that is
	$$
		\d_{u} f(t,x,u) \big|_{u=\vec{u}_0} \equiv 0
	$$
	
	\noindent in a neighborhood of $(t,x) = (0,0)$.
\end{hypo}


\subsection{Statement of the results}
\label{subsection.results}


We recall first the definition of conical domain of $\R_{t}\times\R_{x}^{d}$ centered at $(t,x) = (0,0)$, as in Definition $2.2$ in \cite{morisse2016I}. We denote
\be
	\label{defi.Omega}
	\Omega_{R,\rho} = \bigcup_{t\geq 0} \{t\}\times\Omega_{R,\rho,t} = \left\{(t,x) \in\R\times\R^{d} \;\Big|\; 0\leq t < \rho^{-1} ,\; R |x|_{1} + \rho t <1 \right\} .
\ee


\begin{theo}[Gevrey ill-posedness of the smooth case]
	\label{theorem.smooth}
	Under Assumptions {\rm \ref{hypo.1}}, {\rm \ref{hypo.branching}}, {\rm \ref{hypo.smooth}}, {\rm \ref{hypo.semi-simple}} and {\rm \ref{hypo.structural}}, the Cauchy problem \eqref{Cauchy} is not Hölder well-posed in Gevrey spaces $G^{\sigma}$ for all $\sigma \in(0,\sigma_0)$ with
	$$ \sigma_0 = 1/3 . $$
	
	\noindent That is for all $c>0$, $K$ compact of $\R^{d}$ and $\a\in(0,1]$, there are sequences $R_{\e}^{-1} \to 0$ and $\rho_{\e}^{-1} \to 0$, a family of initial conditions $h_{\e}\in G^{\sigma}$ and corresponding solutions $u_{\e}$ of the Cauchy problem on domains $\Omega_{R_{\e},\rho_{\e}}$ such that
	\be
		\lim_{\e\to 0} ||u_{\e}||_{L^2(\Omega_{R_{\e},\rho_{\e}})} / ||h_{\e}||_{\sigma,c,K}^{\a} = +\infty .
	\ee
	
	\noindent The time of existence of the solutions $u_{\e}$ is at least of size $\e^{1/2-\sigma/2} $.
\end{theo}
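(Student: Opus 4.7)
The proof follows the long-time Cauchy--Kovalevskaya strategy of \cite{morisse2016I}, adapted to the slower, $t$-linear growth rate of the imaginary part of the principal symbol's spectrum. The starting point is the oscillating ansatz
\begin{equation*}
    u_\e(t,x) \;=\; \u\bigl(t/\e^{1/2},\, x,\, (x-x_0)\cdot\xi_0/\e\bigr),
\end{equation*}
in which the temporal rescaling factor $\e^{1/2}$ --- rather than $\e$, as in the initially elliptic case --- is dictated by the fact that the accumulated growth rate $\int_0^t (\gamma_0 \tau/\e)\, d\tau = \gamma_0 t^2/(2\e)$ becomes of order one only when $t \sim \e^{1/2}$. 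Substituting into \eqref{Cauchy}, applying Proposition \ref{prop.normalform.smooth}, and Taylor-expanding $\und{A}(\e^{1/2}s,x) = \e^{1/2} s\, M(x) + O(\e s^2)$, where $M(x)$ has purely imaginary spectrum $\pm i\, \delta(0,x,\xi_0)$ with $\delta(0,0,\xi_0) = \gamma_0$, reduces the Cauchy problem to an equation of the form
\begin{equation*}
    \d_s \u \;-\; s\, M(x)\, \d_\theta \u \;=\; G_\e(s, x, \u, \d_x \u),
\end{equation*}
the remainder $G_\e$ collecting Taylor residues in $t$, tangential $x$-derivative terms, $\u$-dependent corrections of the symbol, and the quadratic zeroth-order forcing from Assumption \ref{hypo.structural}, each carrying a controlled power of $\e$ or of $\u$.

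The second step is the construction of the propagator $U^{{\rm S}}(s',s)$ for the frozen linear equation. Fourier decomposition in $\theta$ turns the propagator equation into the matrix ODE $\d_s U_n = i n s\, M(x)\, U_n$, whose explicit solution $U_n(s',s,x) = \exp\bigl(i n M(x)(s^2 - s'^2)/2\bigr)$ satisfies the Gevrey-compatible bound \eqref{intro.growth.smooth}. Mirroring \cite{morisse2016I}, I would set up a scale of Banach spaces of functions $\u(s,x,\theta)$ holomorphic in $x$ on a shrinking family of complex polydiscs, whose $\theta$-Fourier coefficients carry an exponential weight tailored to the propagator growth. In these spaces the inhomogeneous equation $\d_s \u - s M \d_\theta \u = \Phi$ is inverted by Duhamel, and a Picard iteration produces a unique analytic solution $\u$ up to a rescaled observation time $s_\e$, provided the iterates remain inside the ball on which $G_\e$ is contractive.

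For the Hadamard instability, I would take a single-mode initial datum $h_\e(x) = \kappa_\e\, e_+\, \cos\bigl((x-x_0)\cdot\xi_0/\e\bigr)$, with $e_+$ an eigenvector of $M(0)$ associated with the unstable direction $+i\gamma_0$, and $\kappa_\e$ tuned so that $\|h_\e\|_{\sigma,c,K}$ is of order one; this forces $\kappa_\e \sim \exp(-c\, \e^{-\sigma})$. The single active Fourier mode $n = 1$ is then amplified by the propagator by a factor at least $\exp(\gamma_0 s_\e^2 / 2)$, so the ratio $\|u_\e\|_{L^2}/\|h_\e\|^{\alpha}_{\sigma,c,K}$ is bounded below by $\exp\bigl(\gamma_0 s_\e^2/2 - \alpha c\, \e^{-\sigma}\bigr)$. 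Choosing $s_\e$ slightly larger than $\e^{-\sigma/2}$ makes this ratio diverge as $\e \to 0$; in the physical variable $t = \e^{1/2} s$ this corresponds to the announced observation time $\e^{1/2 - \sigma/2}$. The shrinking conical domains $\Omega_{R_\e,\rho_\e}(0)$ arise from the shrinking polydiscs tracked in the Banach scale.

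The hard step --- and what fixes the threshold $\sigma_0 = 1/3$ --- is establishing the long-time validity of the Picard iteration on the weighted space over the window $[0, s_\e]$. The remainder $G_\e$ carries Taylor contributions of size $\e s^2$ coming from $\und{A}(\e^{1/2}s,x) - \e^{1/2} s M(x) = O(\e s^2)$, together with $\u$-dependent corrections of the symbol and quadratic contributions of $f$, each bringing an additional factor $\e^{1/2}$ from the scaling of the ansatz. Propagating and summing these errors through the weighted analytic norms --- a delicate majoring-series computation in the spirit of \cite{metivier2005remarks} --- produces a smallness condition of the form $\e^{a} s_\e^{b} \ll 1$ whose balance against $s_\e \sim \e^{-\sigma/2}$ yields the admissible range $\sigma \in (0, 1/3)$. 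Carrying out this bookkeeping, and verifying that the cumulative nonlinear error never exceeds the linear amplification that drives the blow-up of $\|u_\e\|_{L^2}/\|h_\e\|^\alpha_{\sigma,c,K}$, is the technical core of the proof.
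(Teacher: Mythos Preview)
Your overall strategy matches the paper's: oscillating ansatz with time rescaling $t = \e^{1/2} s$, propagator growth $\exp(\g_0(s^2-s'^2)/2)$, Duhamel-based fixed point in weighted analytic spaces, and instability driven by single-mode data. Two points deserve comment.

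First, a structural difference. You keep the $x$-dependence in the leading matrix $M(x)$, whereas the paper freezes the propagator entirely at $x=0$ (so $\und{A}^{{\rm S}}(s)$ depends on $s$ only; see Lemma~\ref{lemma.expansion.smooth}) and pushes the spatial variation $tx\cdot {\rm R}^{{\rm S}}_x$ into the remainder. Your choice is closer to \cite{morisse2016I} and would force you to track the gap between an $x$-uniform upper growth rate $\g^\sharp$ and the pointwise rate $\g^\flat$ at $x=0$. The paper's freeze avoids this (so $\g^\sharp = \g^\flat$, Remark~\ref{remark.flat.sharp}) at the price of an extra remainder contribution of size $\und{s}\,R^{-1}$, which in the end imposes the same constraint $\delta < 1/3$.

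Second, and more substantively, your final paragraph does not identify the mechanism that actually produces $\sigma_0 = 1/3$. The operator $\d_\theta$ is \emph{unbounded} on the weighted spaces, and a naive estimate of the Duhamel integral of $(\e^{1/2} s^2)\,\d_\theta\u$ does not close. The paper builds a perturbation parameter $\beta>0$ into the exponential weight (Definition~\ref{definition.fixed.time}); the Duhamel integral against the propagator then \emph{regularizes} $\d_\theta$, but at the cost of a prefactor $\beta^{-1}$ (Proposition~\ref{reg_dtheta}). To keep $e^{\beta \und{s}}$ bounded in the pointwise lower-bound step \eqref{smallness.u-f} one is forced to take $\beta \sim \und{s}^{-1} \sim \e^{\delta/2}$. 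The critical constraint is therefore not $\e^{1/2}\und{s}^2 \ll 1$ but $\beta^{-1}\e^{1/2}\und{s}^2 \ll 1$, i.e.\ $\e^{1/2 - 3\delta/2} \ll 1$, which is exactly $\delta < 1/3$ (Proposition~\ref{prop.below.smooth}). Note also that in the rescaled equation the Taylor residue enters as $\e^{-1/2}\cdot O(\e s^2) = O(\e^{1/2} s^2)$, not $O(\e s^2)$; without this correction and the $\beta^{-1}$ factor your bookkeeping would not land on $1/3$.
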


\begin{theo}[Gevrey ill-posedness of the Airy case]
	\label{theorem.airy}
	Under Assumptions {\rm \ref{hypo.1}}, {\rm \ref{hypo.branching}}, {\rm \ref{hypo.degenerate}} and {\rm \ref{hypo.structural}}, the result of Theorem {\rm \ref{theorem.smooth}} holds for any Gevrey index $\sigma \in (0,\sigma_0)$, with
	$$ \sigma_0 = 2/13 . $$
\end{theo}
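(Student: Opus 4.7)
The plan is to adapt the long-time Cauchy-Kovalevskaya construction of \cite{morisse2016I} to the stiff, non semi-simple Airy regime. First, by Assumption \ref{hypo.1} we may block-diagonalize $\und{A}$ smoothly and reduce to $N=2$. Eliminating the trace of $\und{A}$ by a linear change of independent variables and applying Proposition \ref{prop.normalform.stiff}, the principal symbol becomes the Airy normal form $\und{A}^{{\rm Ai}}(t,x)$ with $t_{\star}(x)=O(x^4)$ under Assumption \ref{hypo.degenerate}. Assumption \ref{hypo.structural} makes $f$ contribute only to a nonlinear remainder. Next, substitute the ansatz $u_{\e}(t,x) = \u(t/\e^{2/3}, x, (x-x_0)\cdot \xi_0/\e)$ into \eqref{Cauchy}; collecting the leading orders in $\e$ produces
$$
\d_s \u - \e^{-1/3}\und{A}^{{\rm Ai}}(\e^{2/3} s, x)\d_{\theta}\u = G_{\e}(\u),
$$
where $G_{\e}$ gathers the $\e^{2/3}$-small $x$-derivative and fully nonlinear contributions.

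The second step is a careful spectral analysis of the associated linear propagator $U^{{\rm Ai}}(s',s)$. In the elliptic regime $s>\e^{-2/3}t_{\star}(x)$, the eigenvalues of $in\,\e^{-1/3}\und{A}^{{\rm Ai}}(\e^{2/3}s, x)$ are $\pm n\,\g_0 (s-\e^{-2/3}t_{\star}(x))^{1/2}$, and integration in $s$ yields the Airy-type upper bound \eqref{intro.growth.airy}. The $\e^{-1/3}$ prefactor anticipated in Remark \ref{remark.2} arises because $\und{A}^{{\rm Ai}}(0,0)$ is not semi-simple: the change of basis diagonalizing the mode-$n$ symbol is singular of order $\e^{-1/3}$. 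I would then transpose the Banach scale of \cite{morisse2016I} to this setting, weighting Fourier coefficients in $\theta$ by the Airy growth and taking an $\e$-dependent analyticity radius $R_{\e}^{-1}$ in $x$. The Duhamel formula
$$
\u(s) = U^{{\rm Ai}}(0,s)h + \int_0^s U^{{\rm Ai}}(s',s) G_{\e}(\u(s'))\,ds'
$$
is then solved by a contraction argument on a long time interval $[0,T_{\e}]$, using the $\e^{2/3}$ smallness of $G_{\e}$ to absorb the exponential growth and derivative losses.

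The central obstacle is the calibration of parameters so that the fixed-point argument survives up to the observation time $T_{\e}$ on an analyticity disk large enough for the Airy growth to overcome the Gevrey-$\sigma$ norm of the datum. The degeneracy $t_{\star}(x)=O(x^4)$ forces $R_{\e}^{-1}$ to be of order at most $\e^{1/12}$, so that $\e^{-2/3}t_{\star}(x)$ remains bounded on the analyticity disk and the spectral picture above extends uniformly in $x$; this is precisely the reason why the non-degenerate case $t_{\star}=O(x^2)$ is out of reach and requires the separate treatment of \cite{morisse2016lemma, morisse2016IIz}, as announced in Remark \ref{remark.deux}. Combining this restriction with the $\e^{-1/3}$ prefactor, the ${}\sim s^{3/2}$ Airy integral at $s_{\e}=T_{\e}/\e^{2/3}$, and the $\e^{-\sigma}$ cost of the Gevrey-$\sigma$ norm of an initial mode of $\theta$-frequency $n_{\e}$ (corresponding to $x$-frequency $n_{\e}/\e$), an optimization over $n_{\e}$ and $T_{\e}$ produces the critical Gevrey exponent $\sigma_0=2/13$. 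Finally, picking $h_{\e}$ concentrated on a single well-tuned Fourier mode along a destabilizing eigendirection, one reads off the divergence of $\|u_{\e}\|_{L^2(\Omega_{R_{\e},\rho_{\e}}(0))}/\|h_{\e}\|_{\sigma,c,K}^{\a}$ as $\e\to 0$ for every $\sigma<2/13$ and $\a\in(0,1]$, exactly as in \cite{morisse2016I}.
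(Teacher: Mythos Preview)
Your outline follows the same template as the paper (reduction to the Airy normal form, long-time Cauchy--Kovalevskaya via a Duhamel fixed point in the weighted analytic spaces of \cite{morisse2016I}, Airy-type growth of the propagator, then parameter calibration), so the global strategy is correct. However, several points of your implementation diverge from the paper in ways that matter, and the step that actually produces $\sigma_0=2/13$ is only asserted.

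First, the paper does \emph{not} keep the full $x$-dependent symbol in the propagator. It freezes $\und{A}^{{\rm Ai}}$ at $x=0$ (Lemmas~\ref{lemma.expansion.airy} and \ref{lemma.growth.propa.airy}) and throws the $x$-dependence, in particular the $t_{\star}(x)$ term, into the remainder $\RR^{{\rm Ai}}$ which enters $G_{\e}$ with a large prefactor $\e^{-1/3}$ (see \eqref{def.G.eta} and Lemma~\ref{lemma.remainder.airy}). Your claim that $G_{\e}$ is ``$\e^{2/3}$-small'' is therefore incorrect in the paper's framework: the dangerous term is $\e^{-1/3}t_{\star}(R^{-1})$, and controlling it is precisely what forces the degeneracy $t_{\star}=O(x^4)$ and drives the final Gevrey threshold. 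If instead you keep an $x$-dependent propagator as you wrote, you must prove majoring-series bounds for it uniformly on the analyticity disk and through the hyperbolic strip $s<\e^{-2/3}t_{\star}(x)$; this is not addressed.

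Second, the quantitative claims are off. The constraint on the analyticity radius is not ``$R_{\e}^{-1}\lesssim \e^{1/12}$''. In the paper's calibration (Proposition~\ref{prop.below.airy}) one sets $\beta=\e^{2\delta/3}$, $\und{s}\approx \e^{-2\delta/3}$, and the $t_{\star}$-constraint reads $R^{-1}\ll \e^{(2/3+5\delta/6)/4}$ when $t_{\star}=O(x^4)$. The value $\sigma_0=2/13$ then comes from a system of asymptotic inequalities among $\beta$, $R$, $\rho$, $M'$ (equations \eqref{un.airy}--\eqref{six.airy}), not from an ``optimization over $n_{\e}$ and $T_{\e}$''. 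In fact there is no optimization over frequency: the datum \eqref{def.initial.data} is a single $\theta$-mode $n=\pm 1$ polarized along the Airy-growing direction (Lemma~\ref{lemma.growth.freesolution.airy}). You should replace your heuristic optimization by the explicit balancing that isolates the binding constraint $\e^{2/3-2\delta/3}\ll \e^{1/3+3\delta/2}$, equivalent to $\delta<2/13$.
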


\noindent Recall that a function $f$ defined on an open set $B$ of $\R^{d}$ is said to belong to the Gevrey space $G^{\sigma}(B)$ if for all compact $K \subset B$, there are constants $C_K >0$ and $c_K>0$ that satisfy
\be
	|\d^{\a} f|_{L^{\infty}(K)} \leq C_K c_K^{|\a|} |\a|!^{1/\sigma} \quad , \quad \forall \a \in\N^{d}.
\ee

\noindent We then define a family of semi-norms on $G^{\sigma}(B)$, for all compact $K\subset B$ and $c>0$ by
\be
	\label{def.norm.gevrey}
	||f||_{\sigma,c,K} = \sup_{\a} |\d^{\a} f|_{L^{\infty}(K)}c^{-|\a|}|\a|!^{-1/\sigma}.
\ee

\begin{remark}
	\label{remark.un}
	The limiting Gevrey index $\sigma_0$ is in both cases due in part to technical limitations. In the proof, in each case remainder terms are proved to be small in the spaces described later. The limiting index $\sigma_0$ is directly influenced by this smallness of the remainders. In the smooth case, a null remainder would imply $\sigma_0 = 1/2$, which is the expected limiting Gevrey index in this case. In the Airy case, a smaller remainder would imply a greater index $\sigma_0$, but it is not clear if the limit $1/2$ could be attained. 
	
	Also, as pointed out in Remark {\rm \ref{remark.2}}, one main difference between both cases is the extra weight for the Airy propagator in the ansatz of highly oscillating solutions, as shown in Lemma {\rm \ref{lemma.growth.propa.airy}}. This implies a stronger constraint on the smallness of the remainder terms appearing in the Airy case, as explained in the proof of Proposition {\rm \ref{prop.below.airy}}.
\end{remark}

The proofs are given in Sections \ref{section.ansatz} to \ref{section.existence}, with an appendix devoted to the Airy equation in Section \ref{section.appendix}. We introduce a functional framework that is flexible enough to simultaneously cover the smooth, semi-simple case (Theorem \ref{theorem.smooth}) and the stiff, non-semi-simple case (Theorem \ref{theorem.airy}). We develop in Section \ref{section.ansatz} the ansatz of highly oscillating solutions which reduces the Cauchy problem \eqref{Cauchy} to a fixed point equation. In Sections \ref{section.spaces} we recall properties of the spaces developed in \cite{morisse2016I}, and use them to prove contraction estimates and existence of solutions. Finally, in Section \ref{section.hadamard} we prove that the constructed solutions satisfy a lower bound that leads to the Hadamard instability for Gevrey regularity $\sigma\in(0,\sigma_0)$.


\section{Highly oscillating solutions and reduction to a fixed point equation}
\label{section.ansatz}


\subsection{Highly oscillating solutions}


As in Section 3.1 of \cite{morisse2016I}, we first reduce \eqref{Cauchy} to the new Cauchy problem
\be
	\label{Cauchy.bis}
	\d_{t}u = \sum_{j}A_{j}(t,x,u)\d_{x_j}{u} + F(t,x,u)u \quad \text{with } u(0,x) = h(x)
\ee

\noindent where $F$ is analytic in a neighborhood of $(0,0,0) \in \R_{t}\times\R_{x}^{d}\times\R^{N}_{u}$ (see \eqref{etc}), and $h$ small analytic functions satisfying $h_{|x=0}=0$, as perturbations of the trivial datum $h\equiv 0$. 

Next, we adapt the ansatz of highly oscillating solutions of \cite{metivier2005remarks} and \cite{morisse2016I} in order to take into account the different time scaling of the exponential growth. In this view we posit
\be
	\label{ansatz}
	u_{\e}(t,x) = \e^{2/(1+\eta)} \u\left(\e^{-1/(1+\eta)}\,t,x,x\cdot\xi_0/\e \right) 
\ee

\noindent where 
\begin{itemize}
	\item The small parameter $\e>0$ corresponds to high frequencies.
	\item The function $\u(s,x,\theta)$ is $2\pi$-periodic in $\theta$.
	\item The scaling term $\e^{2/(1+\eta)}$ insures the smallness of the nonlinear terms.
\end{itemize}

\noindent We introduce for any analytical function $H(t,x,u)$ the compact notation
\be
	\label{def.mathbf.H}
	\H(s,x,\u) = H\left(\e^{1/(1+\eta)}s,x,\e^{2/(1+\eta)}\u\right) .
\ee

\noindent For $u_{\e}(t,x)$ to be solution of \eqref{Cauchy.bis} it is then sufficient that $\u(s,x,\theta)$ solves the following equation
\be
	\label{ds.u.passager}
	\d_{s} \u = \e^{-\eta/(1+\eta)}{\mathbf A} \,\d_{\theta} \u + \e^{1/(1+\eta)}\left( \sum_j {\mathbf A}_j \d_{x_j} \u + {\mathbf F} \, \u\right) 
\ee

\noindent where we use the notation \eqref{def.mathbf.H} for $\mathbf{A}$ and $\mathbf{F}$, and $A$ is defined by \eqref{def.A}.


\subsection{Remainder terms}


We focus here on the term $\e^{-\eta/(1+\eta)}{\mathbf A} \,\d_{\theta} \u$ of the previous equation. To prove the expected growth of solutions of the initial problem, we decompose the symbol $A(t,x,u)$ in several pieces to highlight the leading term denoted by $\und{A}^{{\rm S}}(t)$ for the smooth case, $\und{A}^{{\rm Ai}}(t)$ for the Airy case, which will lead to the exponential growth. 

First, by analyticity of the $A_j$ and Taylor expansion formula, there is a family of analytical matrices $(A_{u_j})_{j=1,\ldots,N}$ such that locally around $(0,0,0)\in\R_{t}\times\R_{x}^{d}\times\R_{u}^{N}$ there holds
\be
	\label{def.A_u}
	A(t,x,u) = \und{A}(t,x) + \sum_j A_{u_j} u_j .
\ee

\noindent In both smooth and Airy cases, we perform an analytical Taylor expansion on $\und{A}(t,x)$ in order to highlight the principal term that lead to the exponential growth. This is made precise in the two following lemmas.

\begin{lemma}[Expansion formula: smooth case]
	\label{lemma.expansion.smooth}
	Following Proposition {\rm \ref{prop.normalform.smooth}}, we introduce the leading term $\und{A}^{{\rm S}}(t)$, defined up to a change of basis and a trace terme as
	\be
		\label{def.und.A.smooth}
		Q_0^{-1}(t,x) \left( \und{A}^{{\rm S}}(t) - \frac{1}{2} {\rm Tr}\,\und{A}(t,x) \right) Q_0(t,x) =  
		\begin{pmatrix}
			0 & t \\
			-\g_0^2t & 0 
		\end{pmatrix}
	\ee
	
	\noindent and the analytical error term
	\be
		\label{def.rest.smooth}
		Q_0^{-1}\,{\rm R}^{{\rm S}} \, Q_0 =
		\begin{pmatrix}
			0 & 0 \\
			-t(\delta^2 - \delta(0,0)^2) & 0
		\end{pmatrix} .
	\ee
	
	\noindent Then there holds
	\be
		\label{def.R.smooth}
		\und{A}(t,x) = \und{A}^{{\rm S}}(t) + {\rm R}^{{\rm S}}(t,x)
	\ee
	
	\noindent and there are analytical matrices ${\rm R}^{{\rm S}}_t(t,x)$ and ${\rm R}^{{\rm S}}_x(t,x)$ such that
	\be
		\label{expansion.smooth}
		{\rm R}^{{\rm S}}(t,x) = t^2{\rm R}^{{\rm S}}_t(t,x) + tx\cdot{\rm R}^{{\rm S}}_x(t,x)
	\ee
	
	\noindent locally around $(0,0)\in\R_{t}\times\R_{x}^{d}$.
\end{lemma}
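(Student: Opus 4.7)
The plan is to reduce the statement to two ingredients: the normal form of Proposition \ref{prop.normalform.smooth}, and a Hadamard-type first-order Taylor expansion applied to the analytic scalar $\delta^{2}-\gamma_{0}^{2}$.

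First I would establish the decomposition \eqref{def.R.smooth}. Subtracting the defining relation \eqref{def.und.A.smooth} of $\und{A}^{{\rm S}}(t)$ from the normal form \eqref{normalform.smooth} of Proposition \ref{prop.normalform.smooth}, and invoking \eqref{smooth.b} which identifies $\delta(0,0,\xi_{0}) = \gamma_{0}$, one finds
\begin{equation*}
Q_{0}^{-1}(t,x,\xi)\bigl(\und{A}(t,x,\xi)-\und{A}^{{\rm S}}(t)\bigr)Q_{0}(t,x,\xi)
= \begin{pmatrix} 0 & 0 \\ -t\bigl(\delta(t,x,\xi)^{2}-\gamma_{0}^{2}\bigr) & 0 \end{pmatrix}.
\end{equation*}
The right-hand side is precisely the matrix appearing in the definition \eqref{def.rest.smooth} of $R^{{\rm S}}$, giving $\und{A} = \und{A}^{{\rm S}} + R^{{\rm S}}$ as claimed.

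For the structural expansion \eqref{expansion.smooth}, I would apply the Hadamard identity
\begin{equation*}
\delta^{2}(t,x,\xi) - \gamma_{0}^{2}
= \int_{0}^{1}\frac{d}{d\tau}\delta^{2}\bigl(\tau t,\tau x,\xi_{0}+\tau(\xi-\xi_{0})\bigr)\,d\tau
\end{equation*}
and the chain rule. Since $\delta$ is real analytic on a polydisc about $(0,0,\xi_{0})$, this produces a scalar analytic function $\varphi_{t}$ and vector-valued analytic $\varphi_{x},\varphi_{\xi}\in\R^{d}$ on a convex neighborhood of $(0,0,\xi_{0})$ such that
\begin{equation*}
\delta^{2}-\gamma_{0}^{2} = t\,\varphi_{t}(t,x,\xi) + x\cdot\varphi_{x}(t,x,\xi) + (\xi-\xi_{0})\cdot\varphi_{\xi}(t,x,\xi).
\end{equation*}
Multiplying by $-t$, conjugating by $Q_{0}(t,x,\xi)$, which is analytic and invertible near $(0,0,\xi_{0})$ by Proposition \ref{prop.normalform.smooth}, and separating the three contributions one obtains \eqref{expansion.smooth} with
\begin{equation*}
R^{{\rm S}}_{t} := -Q_{0}\begin{pmatrix}0 & 0\\ \varphi_{t} & 0\end{pmatrix}Q_{0}^{-1},\qquad
R^{{\rm S}}_{x,j} := -Q_{0}\begin{pmatrix}0 & 0\\ \varphi_{x,j} & 0\end{pmatrix}Q_{0}^{-1},\qquad
R^{{\rm S}}_{\xi,j} := -Q_{0}\begin{pmatrix}0 & 0\\ \varphi_{\xi,j} & 0\end{pmatrix}Q_{0}^{-1},
\end{equation*}
each analytic in a neighborhood of $(0,0,\xi_{0})$.

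I expect no substantial obstacle: the statement is a direct algebraic consequence of the normal form together with the vanishing of $\delta^{2}-\gamma_{0}^{2}$ at $(0,0,\xi_{0})$. The only sanity checks are analyticity of the $\varphi$'s, which follows from smoothness of the parameter integral on a polydisc of analyticity of $\delta$, and preservation of analyticity under conjugation by the analytic invertible matrix $Q_{0}$.
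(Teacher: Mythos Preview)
Your proof is correct and follows essentially the same route as the paper: both obtain \eqref{def.R.smooth} by subtracting the definition of $\und{A}^{{\rm S}}$ from the normal form \eqref{normalform.smooth}, then expand the scalar $\delta^{2}-\gamma_{0}^{2}$ via a first-order Taylor/Hadamard formula to produce analytic coefficients in $t$, $x$, and $\xi-\xi_{0}$, and finally conjugate back by $Q_{0}$. Your presentation is slightly more explicit about the Hadamard integral and the conjugation step, but there is no substantive difference.
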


\begin{proof}
	First the equality \eqref{normalform.smooth} of Lemma \ref{prop.normalform.smooth} implies that
	$$
		Q_0^{-1}(t,x) \left( \und{A}^{{\rm S}} - \frac{1}{2} {\rm Tr} \,\und{A}(t,x) \right) Q_0(t,x) =  
		\begin{pmatrix}
			0 & t \\
			-t\delta^2 & 0
		\end{pmatrix}
	$$
	
	\noindent hence \eqref{def.R.smooth}. Second, by analyticity of $\delta$ and Taylor expansion formula, there are analytical functions ${\rm r}^{{\rm S}}_t$ and ${\rm r}^{{\rm S}}_{x_j}$ such that
	$$
		\delta^2(t,x) - \delta(0,0)^2 = t\,{\rm r}^{{\rm S}}_t(t,x) + x\cdot{\rm r}^{{\rm S}}_x(t,x) .
	$$

	\noindent We finally introduce the matrices
	$$
		Q_0^{-1}\,{\rm R}^{{\rm S}}_t \,Q_0= 
		\begin{pmatrix}
			0 & 0 \\
			-{\rm r}^{{\rm S}}_t & 0
		\end{pmatrix}
		\;  \text{and}  \;
		Q_0^{-1}\,{\rm R}^{{\rm S}}_x \,Q_0 = 
		\begin{pmatrix}
			0 & 0 \\
			-{\rm r}^{{\rm S}}_x & 0
		\end{pmatrix}
	$$

	\noindent which leads to \eqref{expansion.smooth} and ends the proof.
\end{proof}

\begin{lemma}[Expansion formula: Airy case]
	\label{lemma.expansion.airy}
	Following Proposition {\rm \ref{prop.normalform.stiff}}, we introduce the leading term $\und{A}^{{\rm S}}(t)$, defined up to a change of basis and a trace term as
	\be
		\label{def.und.A.airy}
		Q_0^{-1}(t,x) \left( \und{A}^{{\rm Ai}}(t) - \frac{1}{2} {\rm Tr}\,\und{A}(t,x) \right) Q_0(t,x) =  
		\begin{pmatrix}
			0 & 1 \\
			-\g_0^2t & 0 
		\end{pmatrix}
	\ee
	
	\noindent and the analytical error term
	\be
		\label{def.rest.airy}
		Q_0^{-1}(t,x) \left( {\rm R}^{{\rm Ai}} - \frac{1}{2} {\rm Tr}\,\und{A}(t,x) \right) Q_0(t,x) =
		\begin{pmatrix}
			0 & 0 \\
			-t(e - e(0,0)) + t_{\star}e & 0
		\end{pmatrix} .
	\ee	
	
	\noindent Then there holds
	\be
		\label{def.R.airy}
		\und{A}(t,x) = \und{A}^{{\rm Ai}}(t) + {\rm R}^{{\rm Ai}}(t,x).
	\ee
	
	\noindent and there are analytical matrices ${\rm R}^{{\rm Ai}}_t$, ${\rm R}^{{\rm Ai}}_x$ and ${\rm R}^{{\rm Ai}}_e$ such that
	\be
		\label{expansion.airy}
		{\rm R}^{{\rm Ai}}(t,x) = t^2{\rm R}^{{\rm Ai}}_t(t,x) + tx\cdot{\rm R}^{{\rm Ai}}_x(t,x) + t_{\star} {\rm R}^{{\rm Ai}}_e(t,x,\xi)
	\ee
	
	\noindent locally around $(0,0)$.
\end{lemma}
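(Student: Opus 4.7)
The plan is to follow closely the architecture of Lemma \ref{lemma.expansion.smooth}, with the only new ingredient being the extra $t_\star$ contribution inherent to the non-semi-simple Airy case. Throughout I will implicitly use the normalization $\g_0^2 := e(0,0,\xi_0) > 0$, which is positive thanks to the implicit-function-theorem construction of $e$ preceding Assumption \ref{hypo.degenerate}.

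First I would verify that the additive decomposition \eqref{def.R.airy} is a purely algebraic consequence of the definitions \eqref{def.und.A.airy} and \eqref{def.rest.airy}. Summing the bottom-left entries of the two right-hand sides yields
$$
-\g_0^2 t - t\bigl(e - e(0,0,\xi_0)\bigr) + t_\star e = -(t - t_\star)\,e,
$$
which, after conjugation by $Q_0$ and restoration of the half-trace, matches $Q_0^{-1}\bigl(\und{A} - \tfrac{1}{2}{\rm Tr}\,\und{A}\,\Id\bigr)Q_0$ as given by Proposition \ref{prop.normalform.stiff}. Thus $\und{A}^{{\rm Ai}} + {\rm R}^{{\rm Ai}} = \und{A}$.

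Second, I would extract \eqref{expansion.airy} by Taylor-expanding the analytic scalar $e(t,x,\xi)$ to first order around $(0,0,\xi_0)$: there exist analytic functions ${\rm r}^{{\rm Ai}}_t$, ${\rm r}^{{\rm Ai}}_x$, ${\rm r}^{{\rm Ai}}_\xi$ with
$$
e(t,x,\xi) - e(0,0,\xi_0) = t\,{\rm r}^{{\rm Ai}}_t(t,x,\xi) + x\cdot{\rm r}^{{\rm Ai}}_x(t,x,\xi) + (\xi - \xi_0)\cdot{\rm r}^{{\rm Ai}}_\xi(t,x,\xi).
$$
Multiplying this identity by $-t$ produces the first three summands of \eqref{expansion.airy} at the scalar level (in the bottom-left slot of \eqref{def.rest.airy}), while the remaining scalar $t_\star e$ is simply set aside, with ${\rm r}^{{\rm Ai}}_e := e$. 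Assembling each scalar into a $2\times 2$ matrix with a single non-zero bottom-left entry and conjugating back through $Q_0$ — exactly as at the end of the proof of Lemma \ref{lemma.expansion.smooth} — delivers the matrix coefficients ${\rm R}^{{\rm Ai}}_t$, ${\rm R}^{{\rm Ai}}_x$, ${\rm R}^{{\rm Ai}}_\xi$, ${\rm R}^{{\rm Ai}}_e$ stated in the lemma.

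I do not expect a genuine obstacle; the content is bookkeeping. The one point meriting care is a temptation to push the decomposition further by expanding $t_\star(x,\xi)$ itself. Assumption \ref{hypo.degenerate} combined with the computations preceding it ensures $t_\star(x,\xi) = O(x^4)$, which is exactly the smallness that is harvested later in the proof of Proposition \ref{prop.below.airy}; any premature expansion of $t_\star$ would scatter this $x^4$ decay across several coefficients and complicate the later estimates. I would therefore leave $t_\star$ as an irreducible factor, as written in \eqref{expansion.airy}, so that the downstream analysis can use the $t_\star$-dependence in the single, concentrated form required.
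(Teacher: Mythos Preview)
Your proposal is correct and follows essentially the same route as the paper: verify \eqref{def.R.airy} by summing the two normal-form matrices and comparing with Proposition \ref{prop.normalform.stiff}, then Taylor-expand the analytic scalar $e$ about $(0,0,\xi_0)$ and package each term (together with the undisturbed $t_\star e$ contribution) into a single bottom-left-entry matrix conjugated by $Q_0$. The only differences are cosmetic (your $r^{{\rm Ai}}_\bullet$ versus the paper's $e_\bullet$) and your additional, correct remark about not expanding $t_\star$.
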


\begin{proof}
	First equality \eqref{normalform.stiff} of Lemma \ref{prop.normalform.stiff} implies
	$$
		Q_0^{-1}(t,x) \left( \und{A}^{{\rm Ai}} - \frac{1}{2} {\rm Tr}\,\und{A}(t,x) \right) Q_0(t,x) =
		\begin{pmatrix}
			0 & 1 \\
			-(t-t_{\star}(x))e(t,x) & 0
		\end{pmatrix}
	$$
	
	\noindent hence \eqref{def.R.airy}. Second, by analyticity of $e$ and Taylor expansion formula, there are analytical functions $e_t$ and $e_{x_j}$ such that
	$$
		e(t,x) - e(0,0) = te_t +x\cdot e_x 
	$$

	\noindent locally around $(0,0)$. Introducing the matrices
	$$
		Q_0^{-1}\,{\rm R}^{{\rm Ai}}_t \,Q_0 = 
		\begin{pmatrix}
			0 & 0 \\
			-e_t & 0
		\end{pmatrix}
		\quad , \quad
		Q_0^{-1}\,{\rm R}^{{\rm Ai}}_x \,Q_0 = 
		\begin{pmatrix}
			0 & 0 \\
			-e_x & 0
		\end{pmatrix}
		\quad \text{and} \quad 
		Q_0^{-1}\,{\rm R}^{{\rm Ai}}_e \,Q_0 = 
		\begin{pmatrix}
			0 & 0 \\
			e & 0
		\end{pmatrix}
	$$

	\noindent leads to \eqref{expansion.airy} and ends the proof.
\end{proof}

In both Airy and smooth cases, we have then an expansion formula of the form
$$
	A(t,x,u) = \und{A}^{\eta}(t) + {\rm R}^{\eta}(t,x) + A_{u}\cdot u
$$

\noindent where $\eta$ corresponds to the parameter introduced in the ansatz \eqref{ansatz}, adapted to each specific case. This parameter will be be precised in Lemma \ref{lemma.growth.propa.smooth} in the smooth case, and in Lemma \ref{lemma.growth.propa.airy} in the Airy case. The remainder term ${\rm R}^{\eta}$ is ${\rm R}^{{\rm S}}$ defined by \eqref{def.rest.smooth} in the smooth case, and is ${\rm R}^{{\rm Ai}}$ defined by \eqref{def.rest.airy} in the Airy case.

We rewrite now equation \eqref{ds.u.passager} as
\be
	\label{equation.u.compact}
	\d_{s} \u - \e^{-\eta/(1+\eta)}\und{A}^{\eta}(\e^{1/(1+\eta)} s) \d_{\theta} \u = {\mathbf G}^{\eta}(s,x,\u)
\ee

\noindent where we define the source term
\begin{eqnarray}
	\label{def.G.eta}
	{\mathbf G}^{\eta}(s,x,\u) & = & \e^{-\eta/(1+\eta)}\left(\RR^{\eta} + \e^{2/(1+\eta)} \A_u \cdot\u\right)\d_{\theta} \u \\
		& & + \e^{1/(1+\eta)}\left(\A(s,x,\u)\cdot \d_{x} \u + \F(s,x,\u) \u\right) \nonumber 
\end{eqnarray}

\noindent using notation \eqref{def.mathbf.H}.

\begin{remark}
	\label{remark.4}
	Note that in {\rm \cite{morisse2016I}} there are no remainder terms $\RR^{\eta}$, as we consider the full varying-coefficient operator $\und{A}(\e s , x) \d_{\theta}$ in equation {\rm \eqref{equation.u.compact}}.
\end{remark}


\subsection{Upper bounds for the propagators}
\label{subsection.bounds.propa}


To solve the Cauchy problem of equation \eqref{equation.u.compact} with initial datum $h_{\e}$ specified in Section \ref{subsection.free.solution}, we first study the case ${\mathbf G}^{\eta} \equiv 0$, that is
\be
	\label{equation.free}
	\d_{s} \u - \e^{-\eta/(1+\eta)}\und{A}^{\eta}(\e^{1/(1+\eta)} s) \d_{\theta} \u = 0 .
\ee

\noindent Note that this equation is linear, non autonomous and non scalar. For a general $\und{A}^{\eta}(t)$ we define the matrix propagator $U^{\eta}(s',s,\theta)$ as the solution of
$$
	\d_{s} U^{\eta}(s',s,\theta) - \e^{-\eta/(1+\eta)}\und{A}^{\eta}(\e^{1/(1+\eta)} s) \d_{\theta} U^{\eta}(s',s,\theta) = 0 \; , \quad U^{\eta}(s',s',\theta) \equiv {\rm Id} 
$$

\noindent and $U^{\eta}(s',s,\theta)$ is periodic in $\theta$, following the ansatz \eqref{ansatz}. The choice of the time scaling $ s = \e^{-1/(1+\eta)} t $, that is the choice of $\eta$, is such that solutions of \eqref{equation.free} have a typical exponential growth independent of $\e$. Both following Lemmas make the growth of the propagators explicit in both cases.

\begin{lemma}[Growth of the propagator: the smooth case]
	\label{lemma.growth.propa.smooth}
	Under Assumptions {\rm \ref{hypo.branching}}, {\rm \ref{hypo.smooth}} and {\rm \ref{hypo.semi-simple}}, we put $\eta = 1 $. The matrix propagator $U^{{\rm S}}(s',s,\theta)$ defined by
	\be
		\label{equation.propagator.smooth}
		\d_{s} U^{{\rm S}}(s',s,\theta) - \und{A}^{{\rm S}}(s) \d_{\theta} U^{{\rm S}}(s',s,\theta) = 0 \quad , \quad U^{{\rm S}}(s',s',\theta) = {\rm Id}
	\ee

	\noindent satisfies the following growth of its Fourier modes in the $\theta$ variable:
	\be
		\label{growth.propa.smooth}
		|U^{{\rm S}}_n(s',s)| \lesssim \exp\left( \int_{s'}^{s} \g^{\sharp}_{{\rm S}}(\tau) d\tau \, |n|\right) \quad , \quad \forall \, 0 \leq s' \leq s \, , \forall \, n\in\Z 
	\ee
	
	\noindent with
	\be
		\label{def.gammasharp.smooth}
		\g^{\sharp}_{{\rm S}}(\tau) = \g_0 \tau .
	\ee
\end{lemma}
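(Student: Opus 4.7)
The plan is to reduce the PDE to a family of matrix ODEs via Fourier expansion in $\theta$, then exploit the particularly simple structure of $\und{A}^{{\rm S}}(\tau)$ to get an explicit matrix-exponential formula, and finally bound this exponential by diagonalization.

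First I would write $U^{{\rm S}}(s',s,\theta) = \sum_{n \in \Z} U_n^{{\rm S}}(s',s) e^{in\theta}$. Applying the equation \eqref{equation.propagator.smooth} termwise, each Fourier mode satisfies
$$ \d_s U_n^{{\rm S}}(s',s) - i n \, \und{A}^{{\rm S}}(s) \, U_n^{{\rm S}}(s',s) = 0, \qquad U_n^{{\rm S}}(s',s') = {\rm Id}. $$
The crucial observation is that by the definition \eqref{def.und.A.smooth} (up to the harmless trace term and conjugation by $Q_0$, which only affects constants), $\und{A}^{{\rm S}}(\tau) = \tau M$ with the constant matrix
$$ M = \begin{pmatrix} 0 & 1 \\ -\g_0^2 & 0 \end{pmatrix}. $$
Hence $\und{A}^{{\rm S}}(\tau_1)$ and $\und{A}^{{\rm S}}(\tau_2)$ commute for all $\tau_1,\tau_2$, and the time-ordered exponential collapses to a standard one:
$$ U_n^{{\rm S}}(s',s) = \exp\!\left( in \int_{s'}^{s} \und{A}^{{\rm S}}(\tau)\, d\tau \right) = \exp\!\left( \tfrac{in(s^2 - s'^2)}{2}\, M \right). $$

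Next I would diagonalize $M$: its eigenvalues are $\pm i \g_0$, so the eigenvalues of $\frac{in(s^2-s'^2)}{2} M$ are $\mp \frac{n(s^2-s'^2)\g_0}{2}$. One of these has real part equal to $\frac{|n|(s^2-s'^2)\g_0}{2} = |n| \int_{s'}^{s} \g_0 \tau\, d\tau$, which is exactly the exponent on the right-hand side of \eqref{growth.propa.smooth} with $\g^\sharp_{{\rm S}}(\tau) = \g_0 \tau$ as in \eqref{def.gammasharp.smooth}. The matrix exponential is therefore bounded in operator norm by a constant (coming from the condition number of the diagonalizing matrix, which is fixed and independent of $n$, $s$, $s'$) times the exponential of the largest real part of the eigenvalues, giving \eqref{growth.propa.smooth}.

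There is no genuine obstacle here; the only care needed is to track the contribution of the trace term and of the conjugation by $Q_0(t,x,\xi)$ in \eqref{def.und.A.smooth}. The trace is a scalar multiple of the identity, so it only contributes an imaginary phase to $U_n^{{\rm S}}$ and does not affect the modulus. The conjugation by $Q_0$ is an invertible, analytic change of basis independent of $s$ at leading order, which is absorbed in the implicit constant in $\lesssim$. With these remarks, the bound \eqref{growth.propa.smooth} follows immediately from the explicit formula above.
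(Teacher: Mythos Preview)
Your core argument --- Fourier decomposition in $\theta$, the observation that in the right basis the coefficient matrix is $\tau M$ for a fixed $M$ so that the propagator is an ordinary matrix exponential, and diagonalization of $M$ --- is exactly what the paper does (this is the part of the proof concerning $\widetilde V_n$ and the constant change of basis $\und Q$). On that part your bound is correct and identical to the paper's.

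Where you are too quick is the handling of $Q_0$. By \eqref{def.und.A.smooth}, $\und A^{\rm S}$ is \emph{not} literally $\tau M$: it equals $Q_0(t,x,\xi)\,(tM)\,Q_0(t,x,\xi)^{-1}$ plus the trace term, with $t=\e^{1/2}s$. Since $Q_0$ depends on $s$ through $\e^{1/2}s$, the matrices $\und A^{\rm S}(\tau_1)$ and $\und A^{\rm S}(\tau_2)$ do \emph{not} commute in the original basis, and the time-ordered exponential does not literally collapse. Conjugating the Fourier-mode ODE by the time-dependent $Q_0^{-1}$ produces an extra source term $-\e^{1/2}\,Q_0^{-1}\d_t Q_0\,V_n$ on the right-hand side. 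The paper absorbs this perturbation via Duhamel followed by Gronwall: it contributes at most a factor $\exp\bigl(c\,\e^{1/2}(s-s')\bigr)$, which is $O(1)$ since $\e^{1/2}s$ stays small in the regime of interest. Your phrase ``independent of $s$ at leading order, absorbed in the constant'' is morally right, but this Duhamel--Gronwall step is what makes it rigorous; without it the argument has a gap. (You are correct that the trace term is a real scalar and only contributes a unimodular phase.)
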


\begin{proof}

First, as $\und{A}^{{\rm S}}$ is given by \eqref{def.und.A.smooth} up to a change of basis $Q_0(t,x)$ and a trace term $\frac{1}{2} {\rm Tr}\und{A}$, we introduce 
\be
	\label{local.def.Vn}
	V_n(s',s,x) = \exp \left( in \int_{s'}^{s} \frac{1}{2} {\rm Tr}\und{A}(\e^{1/(1+\eta)} \tau,x) d\tau \right) Q_0^{-1}\left(\e^{1/(1+\eta)} s , x \right) U_n(s',s)
\ee

\noindent which solves
\be
	\label{local.Vn.smooth}
	\d_{s}V_n(s',s) - in 
		\begin{pmatrix}
			0 & \e^{(1-\eta)/(1+\eta)} s \\
			-\g_0^2 \e^{(1-\eta)/(1+\eta)} s & 0 
		\end{pmatrix}
	V_n(s',s) = - \e^{1/(1+\eta)} Q_0^{-1}\d_{t} Q_0 \,V_n(s',s)
\ee

\noindent with initial condition $V_n(s',s',x) = Q_0^{-1}\left(\e^{1/(1+\eta)} s',x\right)$. We focus then on the autonomous differential system
\be
	\label{local.Vntilde.smooth}
	\d_{s}\widetilde{V}_n(s',s) - in 
		\begin{pmatrix}
			0 & \e^{(1-\eta)/(1+\eta)} s \\
			-\g_0^2 \e^{(1-\eta)/(1+\eta)} s & 0 
		\end{pmatrix}
	\widetilde{V}_n(s',s) = 0
\ee

\noindent which becomes, with the choice $\eta = 1$, the $\e$-free matrix equation
$$
	\d_{s}\widetilde{V}_n(s',s) - in \,s \,
		\begin{pmatrix}
			0 & 1 \\
			-\g_0^2 & 0 
		\end{pmatrix}
	\widetilde{V}_n(s',s) = 0 .
$$

\noindent The complex constant change of basis
$$
	\und{Q} =
	\begin{pmatrix}
		1 & 1 \\
		-i\g_0 & i\g_0
	\end{pmatrix}
$$

\noindent leads us to the exact solution
$$
	\widetilde{V}_n(s',s) = \und{Q}
	\begin{pmatrix}
		\exp(n\g_0 (s^2-s'^2)/2) & 0 \\
		0 & \exp(-n\g_0 (s^2-s'^2)/2)
	\end{pmatrix}
	\und{Q}^{-1}
$$

\noindent which satisfies the upper bound
\be
	\label{local.tildeV.bound}
	\left|\widetilde{V}_n(s',s)\right| \lesssim \exp\left( \int_{s'}^{s} \g^{\sharp}_{{\rm S}}(\tau) d\tau \, |n| \right)
\ee

\noindent with $\g^{\sharp}_{{\rm S}}$ defined in \eqref{def.gammasharp.smooth}. Getting back to \eqref{local.Vn.smooth}, we use Duhamel formula to write
\be
	\label{local.smooth.duhamel}
	V_n(s',s) = \widetilde{V}_n(s',s)Q_0^{-1}(\e^{1/2}s') - \int_{s'}^{s} \e^{1/2} \widetilde{V}_n(\tau,s)\, \left(Q_0^{-1}\d_{t} Q_0 \right) (\e^{1/2}\tau)\,V_n(s',\tau) d\tau .
\ee

\noindent Note that $V_n(s',s)$ depends also on $x$ through $Q_0 = Q_0(t,x)$. Factorizing by the exponential growth $\exp( \int_{s'}^{s} \g^{\sharp}_{{\rm S}}(\tau) d\tau \, |n| )$, we get
\begin{align*}
	& \exp\left( -\int_{s'}^{s} \g^{\sharp}_{{\rm S}} \, |n| \right) \left|V_n(s',s)\right| \\
	& \quad \lesssim \, \exp\left( -\int_{s'}^{s} \g^{\sharp}_{{\rm S}}\, |n| \right)  \left|\widetilde{V}(s',s)\right| \\
	& \qquad + \int_{s'}^{s} \e^{1/2} \exp\left( -\int_{\tau}^{s} \g^{\sharp}_{{\rm S}} \, |n| \right) \left|\widetilde{V}_n(\tau,s)\right|\, \left|Q_0^{-1}\d_{t} Q_0 \right| (\e^{1/2}\tau)\, \exp\left( -\int_{s'}^{\tau} \g^{\sharp}_{{\rm S}} \, |n| \right) |V_n(s',\tau)| d\tau 
\end{align*}

\noindent and the bound holds uniformly in $x$. We introduce then 
$$
	y_{\e}(s',s) = \max_{x \in K_{\e}} \exp\left( -\int_{s'}^{s} \g^{\sharp}_{{\rm S}} \, |n| \right) \left|V_n(s',s,x)\right| \quad \text{with } K_{\e} = B_{R^{-1}}(0) .
$$ 

\noindent Thanks to the upper bound \eqref{local.tildeV.bound} there holds
$$
	y_{\e}(s',s) \lesssim 1 + \int_{s'}^{s} \e^{1/2} \, \left|Q_0^{-1}\d_{t} Q_0 \right| (\e^{1/2}\tau)\,y_{\e}(s',\tau) d\tau
$$

\noindent and we make use of the Gronwall inequality to get 
$$
	y_{\e}(s',s) \lesssim \exp\left( \e^{1/2}c\,(s-s') \right) \quad \text{with} \quad c = \max_{x \in K_{\e}} \left|Q_0^{-1}\d_{t} Q_0 \right|
$$

\noindent As $\e^{1/2}s$ is small in our setting, we get the announced upper bound \eqref{growth.propa.smooth}.  

\end{proof}

\vspace{0.5cm}

\begin{lemma}[Growth of the propagator: the Airy case]
	\label{lemma.growth.propa.airy}
	Under Assumptions {\rm \ref{hypo.branching}} and {\rm \ref{hypo.degenerate}} we put $ \eta = 1/2$. The matrix propagator $U^{{\rm Ai}}(s',s,\theta)$ defined by 
	\be
		\label{equation.propagator.airy}
		\d_{s} U^{{\rm Ai}}(s',s,\theta) - \e^{-1/3}\und{A}^{{\rm Ai}}(\e^{2/3} s) \d_{\theta} U^{{\rm Ai}}(s',s,\theta) = 0 \quad , \quad U^{{\rm Ai}}(s',s',\theta) \equiv {\rm Id}
	\ee

	\noindent  satisfies the following growth of its Fourier modes in the $\theta$ variable:
	\be
		\label{growth.propa.airy}
		|U^{{\rm Ai}}_n(s',s)| \lesssim \e^{-1/3}\exp\left( \int_{s'}^{s} \g^{\sharp}_{{\rm Ai}}(\tau) d\tau \, |n|)\right) \quad , \quad \forall \, 0 \leq s' \leq s <\e^{-2/3} \; , \; \forall \, n\in\Z 
	\ee
	
	\noindent with
	\be
		\label{def.gammasharp.airy}
		\g^{\sharp}_{{\rm Ai}}(\tau) = \g_0 \tau^{1/2} .
	\ee
\end{lemma}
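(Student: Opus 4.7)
The plan is to mimic the blueprint of the smooth case (Lemma \ref{lemma.growth.propa.smooth}), with the harmonic-oscillator reference system replaced by the classical Airy equation. I would first Fourier expand $U^{\mathrm{Ai}}(s',s,\theta) = \sum_n U_n^{\mathrm{Ai}}(s',s) e^{in\theta}$; each mode satisfies a $2 \times 2$ ODE. Following \eqref{local.def.Vn}, conjugating by $Q_0(\e^{2/3} s, x, \xi)$ and removing the trace by a scalar phase reduces the study to the non-autonomous system
\begin{equation*}
	\d_s \widetilde{V}_n - in \begin{pmatrix} 0 & \e^{-1/3} \\ -\g_0^2 \e^{1/3} s & 0 \end{pmatrix} \widetilde{V}_n = -\e^{2/3} (Q_0^{-1} \d_t Q_0) \widetilde{V}_n,
\end{equation*}
with $\widetilde{V}_n(s',s')$ determined by $Q_0^{-1}(\e^{2/3}s',x,\xi)$. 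The choice $\eta=1/2$ is what makes the principal part $\e$-free up to a diagonal rescaling.

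Next I would analyze the associated autonomous homogeneous system. Elimination of the second component yields the scalar Airy equation $\d_s^2 u = n^2 \g_0^2 s\, u$, which under the rescaling $\sigma = (|n|\g_0)^{2/3} s$ becomes the canonical form $\d_\sigma^2 u = \sigma u$ with fundamental pair $\mathrm{Ai}, \mathrm{Bi}$ of Wronskian $1/\pi$. A companion diagonal rescaling $\widetilde{V}_n = \mathrm{diag}(1, c_{n,\e}) Z_n$ with $c_{n,\e}$ proportional to $\e^{1/3}|n|^{-1/3}$ balances the off-diagonal entries and transforms the system into a form free of both $\e$ and $n$. Constructing the propagator for $Z_n$ explicitly from $\mathrm{Ai}, \mathrm{Bi}$ and their derivatives, and invoking the asymptotics $|\mathrm{Bi}(\sigma)| \lesssim \langle\sigma\rangle^{-1/4} e^{(2/3)\sigma^{3/2}}$, $|\mathrm{Ai}(\sigma)| \lesssim \langle\sigma\rangle^{-1/4} e^{-(2/3)\sigma^{3/2}}$ (with the matching $\langle\sigma\rangle^{+1/4}$ estimates for $\mathrm{Ai}', \mathrm{Bi}'$) from the appendix in Section \ref{section.appendix}, the $Z_n$-propagator is of size $\exp\!\bigl((2/3)(\sigma^{3/2} - \sigma'^{3/2})\bigr) = \exp\!\bigl(|n| \int_{s'}^s \g_{\mathrm{Ai}}^\sharp(\tau)\, d\tau\bigr)$, which is exactly the exponential factor of \eqref{growth.propa.airy}. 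Undoing the diagonal conjugation multiplies the dangerous off-diagonal entry of the propagator by $1/c_{n,\e}$, i.e., by $\e^{-1/3}|n|^{1/3}$ up to constants, producing the $\e^{-1/3}$ prefactor flagged in Remark \ref{remark.2}.

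Finally, the inhomogeneous term $-\e^{2/3} Q_0^{-1} \d_t Q_0\, \widetilde{V}_n$ is restored by Duhamel and Gronwall exactly as in \eqref{local.smooth.duhamel}: factoring out the Airy growth and applying Gronwall yields a multiplicative perturbation of order $\exp(C \e^{2/3} s)$, which is $O(1)$ on the entire allowed range $0 \leq s < \e^{-2/3}$. The main technical obstacle, absent from the smooth case, is controlling the extra $|n|^{1/3}$ factor generated by the diagonal rescaling: away from the diagonal $\{s' = s\}$, it is absorbed by the $\langle\sigma\rangle^{-1/4}$ decay in the Airy asymptotics combined with the exponential growth in $|n|$; near the diagonal, one uses $U_n^{\mathrm{Ai}}(s',s') = {\rm Id}$ to extract the cancellation built into the difference $\mathrm{Bi}(\sigma)\mathrm{Ai}(\sigma') - \mathrm{Ai}(\sigma)\mathrm{Bi}(\sigma')$. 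These Airy-specific bounds are precisely the content of the appendix advertised in Section \ref{section.appendix}.
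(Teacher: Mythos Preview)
Your plan is essentially the paper's own proof. The paper also conjugates by $Q_0$ and removes the trace to land on the system \eqref{local.Vntilde.airy}, eliminates to the scalar Airy equation, and reads off the propagator entries from explicit Airy functions together with their large-argument asymptotics; the perturbation $-\e^{2/3}Q_0^{-1}\d_t Q_0$ is then handled by Duhamel--Gronwall exactly as you describe.

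The only stylistic difference is the choice of Airy basis. The paper works directly with the $n$-dependent pair $\mathrm{Ai}_n(\cdot),\,\mathrm{Ai}_n(j\cdot)$ defined by \eqref{def.ain2}, whose asymptotics already carry the factors $|n|^{\mp 1/2}$ (Lemma~\ref{lemma.asymptotics}); the explicit matrix entries \eqref{matrix.U.Ai} then have the $|n|$-powers cancel automatically, so no separate ``near-diagonal'' argument or $|n|^{1/3}$ bookkeeping is needed. Your route through the canonical $\mathrm{Ai},\mathrm{Bi}$ and the diagonal rescaling $D=\mathrm{diag}(1,c_{n,\e})$ is equivalent but forces you to track that extra $|n|^{1/3}$ by hand --- the paper's parameterized basis buys exactly that simplification. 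One small caution: your claim that Gronwall yields a factor $\exp(C\e^{2/3}s)$ tacitly assumes $D^{-1}(Q_0^{-1}\d_t Q_0)D$ stays $O(1)$, but conjugation by $D$ inflates its $(2,1)$ entry by $c_{n,\e}^{-1}\sim \e^{-1/3}|n|^{1/3}$; the paper sidesteps this by asserting the reduction ``as in the smooth case'' and relying on the fact that in the application $\e^{2/3}s$ (hence $t$) is small, so the Gronwall constant remains bounded.
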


\begin{proof}

We proceed as in the previous proof of Lemma \ref{lemma.growth.propa.smooth}, by defining
$$
	V_n(s',s,x) = \exp \left( in \int_{s'}^{s} \frac{1}{2} {\rm Tr}\und{A}(\e^{1/(1+\eta)} \tau,x) d\tau \right) Q_0^{-1}(\e^{1/(1+\eta)} s , x ) U_n^{{\rm Ai}}(s',s,x)
$$

\noindent and looking at the matrix equation
\be
	\label{local.Vntilde.airy}
	\d_{s}\widetilde{V}_n(s',s) - in 
		\begin{pmatrix}
			0 & \e^{-\eta/(1+\eta)} \\
			-\g_0^2 \e^{(1-\eta)/(1+\eta)} s & 0 
		\end{pmatrix}
	\widetilde{V}_n(s',s) = 0 .
\ee

\noindent As the eigenvalues of 
$$
	\begin{pmatrix}
		0 & \e^{-\eta/(1+\eta)} \\
		-\g_0^2\e^{(1-\eta)/(1+\eta)}s & 0 
	\end{pmatrix}
$$

\noindent are $\pm i\g_0 \e^{(1-2\eta)/(1+\eta)}\sqrt{s}$, the choice $\eta = 1/2$ is natural in this case. 

As in the proof of Lemma \ref{lemma.growth.propa.smooth}, to prove the upper bound \eqref{growth.propa.airy} it suffices to prove the same bound for $\widetilde{V}_n(s',s)$. This is postponed to Section \ref{subsection.prop} of the Appendix. It uses classical bounds on the Airy function.

\end{proof}
%


\subsection{Free solutions}
\label{subsection.free.solution}


As in Section 3.5 in \cite{morisse2016I}, we seek for high-oscillating, small and well-polarized initial data of the form
\be
	\label{def.initial.data}
	h^{\eta}_{\e}(x) = \e^{2/(1+\eta)}e^{-M(\e)} {\rm Re}\left(e^{ix\cdot\xi_0/\e} \vec{e}^{\,\eta}_{+} + e^{-ix\cdot\xi_0/\e} \vec{e}^{\,\eta}_{-}\right)
\ee

\noindent which correspond in the ansatz \eqref{ansatz} of high-oscillating solutions to
\be
	\label{def.initial.data.ansatz}
	\h^{\eta}_{\e}(\theta) = e^{-M(\e)} {\rm Re}\left(e^{i\theta} \vec{e}^{\,\eta}_{+} + e^{-i\theta} \vec{e}^{\,\eta}_{-}\right) .
\ee

\noindent Here $\vec{e}^{\,\eta}_{+}$ and $\vec{e}^{\,\eta}_{-}$ are vectors chosen in each case such that $U^{\eta}(0,s)\h^{\eta}_{\e}$ satisfies the maximal growth of $U^{\eta}$, in either smooth or Airy case. The parameter $M(\e)$ is chosen such that both the Gevrey norm and the size of $h^{\eta}_{\e}$ is small. Following Lemma 3.3 of \cite{morisse2016I}, we posit
\be
	\label{size.gevrey}
	M(\e) = \e^{-\delta} , \quad \delta \in(\sigma,1) .
\ee

\noindent Remark 3.4 in \cite{morisse2016I} explains in particular the link between the long time of existence of solutions and the Gevrey weight $e^{-M(\e)}$, hence the constraint $\sigma < \delta$. 

We introduce also 
\be
	\label{free.solution.eta}
	\f^{\eta}(s,\theta) = U^{\eta}(0,s) \h^{\eta}_{\e}(\theta)
\ee

\noindent which we call the free solution of equation \eqref{equation.u.compact}, as it solves the equation with ${\mathbf G}^{\eta} \equiv 0$. In both smooth and Airy cases, we prove that for well-chosen $\vec{e}^{\,\eta}_{+}$ and $\vec{e}^{\,\eta}_{-}$, Lemma 3.2 of \cite{morisse2016I} still holds with
\be
	\g^{\flat}_{\eta}(\tau) = \g_0 \tau^{\eta}
\ee 

\noindent for $\eta = 1$ (corresponding to the smooth case) and $\eta = 1/2$ (corresponding to the Airy case).

\begin{lemma}[Growth of the free solution: the smooth case]
	\label{lemma.growth.freesolution.smooth}
	We define
	\be
		\label{def.vec.smooth}
		\vec{e}_{+}^{\,{\rm S}} = Q_0(0,x)
		\begin{pmatrix}
			1 \\
			-i\g_0
		\end{pmatrix}
		\quad \text{and} \quad
		\vec{e}_{-}^{\,{\rm S}} = Q_0(0,x)
		\begin{pmatrix}
			1 \\
			i\g_0
		\end{pmatrix} .
	\ee
	
	\noindent Then the free solution $\f^{{\rm S}}$ of the smooth case satisfies
	\be
		\label{growth.free.solution.smooth}
		|\f^{{\rm S}}(s)| \approx e^{-M(\e)} e^{\int_{0}^{s} \g_{{\rm S}}^{\flat}(\tau) d\tau} \quad , \quad \forall s \geq 0 
	\ee
	
	\noindent where $\approx$ means equality up to a constant and with
	\be
		\label{def.gamma.flat.smooth}
		\g_{{\rm S}}^{\flat}(\tau) = \g_0 \tau .
	\ee
\end{lemma}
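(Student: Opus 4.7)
The plan is to reduce the computation to the matrix analysis carried out for the propagator in Lemma \ref{lemma.growth.propa.smooth}, and then exploit the fact that the vectors $\vec{e}^{\,{\rm S}}_\pm$ are precisely the eigenvectors of the limiting constant-coefficient system that select the maximally growing mode.

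First I decompose the datum $\h^{{\rm S}}_\e$ into Fourier modes in $\theta$: only $n=\pm 1$ contribute, with Fourier coefficients of size $e^{-M(\e)}$ proportional to $\vec{e}^{\,{\rm S}}_{\pm}$ (plus conjugate terms coming from the real part). By linearity, $\f^{{\rm S}}(s,\theta)=\sum_{n=\pm 1}U^{{\rm S}}_n(0,s)\hat h_n\,e^{in\theta}$, so it suffices to study $U^{{\rm S}}_{\pm 1}(0,s)\vec{e}^{\,{\rm S}}_{\pm}$.

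I use the reduction $V_n(0,s)=\exp\bigl(in\int_0^s \tfrac12{\rm Tr}\,\und{A}\bigr)Q_0^{-1}(\e^{1/2}s)U^{{\rm S}}_n(0,s)$ introduced in the proof of Lemma \ref{lemma.growth.propa.smooth}. The initial condition is $V_n(0,0)=Q_0^{-1}(0)$. Applied to $\vec{e}^{\,{\rm S}}_{\pm}=Q_0(0)(1,\mp i\g_0)^T$, the Duhamel formula \eqref{local.smooth.duhamel} gives
$$V_n(0,s)\vec{e}^{\,{\rm S}}_{\pm}=\widetilde V_n(0,s)\begin{pmatrix}1\\\mp i\g_0\end{pmatrix}-\int_0^s \e^{1/2}\widetilde V_n(\tau,s)(Q_0^{-1}\d_t Q_0)(\e^{1/2}\tau)V_n(0,\tau)\vec{e}^{\,{\rm S}}_{\pm}\,d\tau.$$
The key algebraic point is that the columns of the diagonalizing matrix $\und Q$ appearing in the proof of Lemma \ref{lemma.growth.propa.smooth} are exactly $(1,\mp i\g_0)^T$. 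Consequently $\und Q^{-1}(1,-i\g_0)^T=(1,0)^T$ and $\und Q^{-1}(1,i\g_0)^T=(0,1)^T$, and the explicit formula for $\widetilde V_n$ yields
$$\widetilde V_{1}(0,s)\begin{pmatrix}1\\-i\g_0\end{pmatrix}=e^{\g_0 s^2/2}\begin{pmatrix}1\\-i\g_0\end{pmatrix},\quad \widetilde V_{-1}(0,s)\begin{pmatrix}1\\i\g_0\end{pmatrix}=e^{\g_0 s^2/2}\begin{pmatrix}1\\i\g_0\end{pmatrix}.$$
In both cases only the growing exponential $e^{\g_0 s^2/2}=\exp\bigl(\int_0^s\g^{\flat}_{{\rm S}}(\tau)d\tau\bigr)$ appears, with no cancellation.

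It remains to control the Duhamel remainder and transfer the estimate back to $U^{{\rm S}}_n$. Using the pointwise bound \eqref{local.tildeV.bound} together with Gronwall on $y(0,s)=e^{-|n|\g_0 s^2/2}|V_n(0,s)\vec{e}^{\,{\rm S}}_{\pm}|$ (exactly as in the proof of Lemma \ref{lemma.growth.propa.smooth}), one obtains
$$V_n(0,s)\vec{e}^{\,{\rm S}}_{\pm}=e^{\g_0 s^2/2}\Bigl[(1,\mp i\g_0)^T+O(\e^{1/2}s)\Bigr],$$
so the leading term dominates for $\e^{1/2}s$ small, which is our regime. Going back via $U^{{\rm S}}_n(0,s)=e^{-in\int_0^s\frac12{\rm Tr}\und A}Q_0(\e^{1/2}s)V_n(0,s)$, the exponential prefactor is a unitary phase, $Q_0(\e^{1/2}s)$ is bounded from above and below by continuity, and the above asymptotic shows $|U^{{\rm S}}_{\pm 1}(0,s)\vec{e}^{\,{\rm S}}_{\pm}|\approx e^{\g_0 s^2/2}$. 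Summing the two modes and inserting the prefactor $e^{-M(\e)}$ yields \eqref{growth.free.solution.smooth}. The only subtle point, and the reason one must verify equivalence (and not merely an upper bound), is the no-cancellation step; it is precisely built into the choice \eqref{def.vec.smooth} of $\vec{e}^{\,{\rm S}}_{\pm}$, which polarizes the datum onto the exponentially growing eigendirection of the frozen symbol.
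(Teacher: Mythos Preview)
Your proof is correct and follows essentially the same approach as the paper: reduce to the auxiliary flow $V_n$ via the change of basis $Q_0$ and the trace phase, observe that the vectors $(1,\mp i\g_0)^T$ are eigenvectors of $\widetilde V_{\pm 1}(0,s)$ with eigenvalue $e^{\g_0 s^2/2}$, and then control the Duhamel remainder by $O(\e^{1/2}s)\,e^{\g_0 s^2/2}$ using the bounds already established in Lemma~\ref{lemma.growth.propa.smooth}. The only cosmetic difference is that the paper bounds the Duhamel integral directly (plugging in the upper bounds for $\widetilde V$ and $V$ from Lemma~\ref{lemma.growth.propa.smooth}), whereas you phrase this as a Gronwall step; strictly speaking, Gronwall alone gives only the upper bound on $|V_n(0,s)\vec e^{\,{\rm S}}_\pm|$, and the $O(\e^{1/2}s)$ remainder then follows by re-inserting that bound into the integral --- but this is exactly what the paper does and your formulation is easily read that way.
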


\begin{proof}

	We follow step by step the proof of Lemma \ref{lemma.growth.propa.smooth}. First, it is explicit that there holds
	\be
		\label{local.Vtilde+1}
		\widetilde{V}_{+1}(0,s) 
		\begin{pmatrix}
			1 \\
			-i\g_0
		\end{pmatrix}
		= e^{\g_0 s^2/2} 
		\begin{pmatrix}
			1 \\
			-i\g_0
		\end{pmatrix}
	\ee
	
	\noindent and also
	\be
		\label{local.Vtilde-1}
		\widetilde{V}_{-1}(0,s) 
		\begin{pmatrix}
			1 \\
			i\g_0
		\end{pmatrix}
		= e^{\g_0 s^2/2} 
		\begin{pmatrix}
			1 \\
			i\g_0
		\end{pmatrix} .
	\ee
	
	\noindent Then, thanks to \eqref{local.smooth.duhamel} there holds
	$$
		V_{+1}(0,s)\vec{e}_{+}^{\,{\rm S}}
		= \widetilde{V}_{+1}(0,s)
		\begin{pmatrix}
			1 \\
			-i\g_0
		\end{pmatrix}
		 - \int_{0}^{s} \e^{1/2} \widetilde{V}_{+1}(\tau,s)\, \left(Q_0^{-1}\d_{t} Q_0 \right) (\e^{1/2}\tau)\,V_n(0,\tau)\vec{e}_{+}^{\,{\rm S}} d\tau .
	$$
	
	\noindent Using the upper bounds of $\widetilde{V}$ and $V$ proved in Lemma \ref{lemma.growth.propa.smooth}, we get the following estimate for the integral term
	\begin{align*}
		& \left| \int_{0}^{s} \e^{1/2} \widetilde{V}_{+1}(\tau,s)\, \left(Q_0^{-1}\d_{t} Q_0 \right) (\e^{1/2}\tau)\,V_n(0,\tau)\vec{e}_{+}^{\,{\rm S}} d\tau \right| \\
		& \lesssim \e^{1/2} \int_{0}^{s} \exp( \g_0(s^2 - \tau^2)/2) \exp(\g_0 \tau^2/2) d\tau \\
		& \lesssim \e^{1/2} s \exp(\g_0 s^2/2) .
	\end{align*}
	
	\noindent By \eqref{local.Vtilde+1} and as $\e^{1/2}s$ is small, we get  
	$$
		\left| V_{+1}(0,s)\vec{e}_{+}^{\,{\rm S}} \right| \approx e^{\g_0 s^2/2}
	$$
	
	\noindent and the same holds for $V_{-1}(0,s)$. We end the proof by using formula \eqref{local.def.Vn}.

\end{proof}

In the Airy case, we make a careful analysis of the prefactor term coming from the crossing of eigenvalues.

\begin{lemma}[Growth of the free solution: Airy case]
	\label{lemma.growth.freesolution.airy}
		We define
	\be
		\label{def.vec.airy}
		\vec{e}_{+}^{\,{\rm Ai}} = Q_0(0,x)
		\begin{pmatrix}
			{\rm Ai_1}(0) \\
			-i\e^{1/3}j{\rm Ai_1}'(0)
		\end{pmatrix}
		\quad \text{and} \quad
		\vec{e}_{-}^{\,{\rm Ai}} = Q_0(0,x)
		\begin{pmatrix}
			{\rm Ai_1}(0) \\
			i\e^{1/3}j{\rm Ai_1}'(0)
		\end{pmatrix}
		\quad
	\ee
	
	\noindent where ${\rm Ai_1}$ is the Airy function defined in Lemma {\rm \ref{lemma.scalar.airy}} of the Appendix, and $j=e^{2i\pi/3}$. Then the free solution $\f^{{\rm Ai}}$ of the Airy case satisfies
	\be
		\label{growth.free.solution.airy}
		|\f^{{\rm Ai}}(s)| \approx s^{-1/4} e^{-M(\e)} e^{\int_{0}^{s} \g_{{\rm Ai}}^{\flat}(\tau) d\tau} \quad , \quad \forall 0 \leq s < \e^{-2/3} 
	\ee
	
	\noindent with
	\be
		\label{def.gamma.flat.airy}
		\g_{{\rm Ai}}^{\flat}(\tau) = \g_0 \tau^{1/2} .
	\ee
\end{lemma}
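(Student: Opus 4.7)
The plan is to mimic the structure of the proof of Lemma \ref{lemma.growth.freesolution.smooth}, reducing the matrix evolution to a scalar Airy equation, and then invoking the classical asymptotics of the Airy function recalled in Lemma \ref{lemma.scalar.airy} of the Appendix to extract both the exponential growth with rate $\g^{\flat}_{{\rm Ai}}$ and the algebraic prefactor $s^{-1/4}$.

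First I would repeat the change of unknown from the proof of Lemma \ref{lemma.growth.propa.airy}: with $\eta = 1/2$, write
\[
	V_n(s',s,x;\e,\xi) = \exp\!\left(in \int_{s'}^{s} \tfrac{1}{2}{\rm Tr}\,\und{A}(\e^{2/3}\tau,x,\xi)\, d\tau\right) Q_0^{-1}(\e^{2/3} s,x,\xi)\,U^{{\rm Ai}}_n(s',s,x;\e,\xi)
\]
so that $V_n$ satisfies an equation of the form \eqref{local.Vntilde.airy} modulo a source term $-\e^{2/3}Q_0^{-1}\d_t Q_0 V_n$. The leading autonomous equation for $\widetilde{V}_n$ is, after the $\eta = 1/2$ scaling,
\[
	\d_s \widetilde{V}_n - in \begin{pmatrix} 0 & \e^{-1/3} \\ -\g_0^2 s & 0 \end{pmatrix} \widetilde{V}_n = 0.
\]
For $n = \pm 1$ this is exactly the system associated with the scalar Airy equation $w'' = \pm i\g_0^2 s \cdot \e^{-1/3} w$ once one reads the second component as a scalar unknown. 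The rescaling $z = \g_0^{2/3}\e^{-1/9}\cdot(\mp i s)$ (up to the cube-root factor $j = e^{2i\pi/3}$) brings this scalar equation to the standard Airy form $w'' = z w$ of Lemma \ref{lemma.scalar.airy}; the role of the $\e^{1/3}j$ factor in the lower component of $\vec{e}_{\pm}^{\,{\rm Ai}}$ is precisely to match the Wronskian data $({\rm Ai}_1(0),{\rm Ai}_1'(0))$ on the branch of solutions that grows at $+\infty$, so that $\widetilde{V}_{\pm 1}(0,s)\, Q_0^{-1}(0,\cdot)\,\vec{e}_{\pm}^{\,{\rm Ai}}$ reduces to evaluating ${\rm Ai}_1$ at the rescaled argument.

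Second, I would plug in the classical large-argument asymptotics ${\rm Ai}_1(z) \sim z^{-1/4} e^{(2/3)z^{3/2}}$ (up to a nonzero constant), which yields
\[
	\bigl|\widetilde{V}_{\pm 1}(0,s) Q_0^{-1}(0,\cdot)\vec{e}_{\pm}^{\,{\rm Ai}}\bigr| \approx s^{-1/4}\exp\!\left(\tfrac{2}{3}\g_0 s^{3/2}\right) = s^{-1/4}\exp\!\left(\int_0^s \g_0 \tau^{1/2}\, d\tau\right),
\]
valid as long as the rescaled argument is large, which is guaranteed for $s$ bounded below by a fixed positive constant in the regime $s < \e^{-2/3}$. (Near $s=0$ the bound $|\widetilde{V}_{\pm 1}(0,s)Q_0^{-1}(0,\cdot)\vec{e}_{\pm}^{\,{\rm Ai}}| \approx 1$ holds trivially, and matches $s^{-1/4} e^{(2/3)\g_0 s^{3/2}}$ up to a harmless constant on any fixed compact of $s$.) This identifies $\g^{\flat}_{{\rm Ai}}(\tau) = \g_0 \tau^{1/2}$ and produces the announced $s^{-1/4}$ prefactor.

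Third, I would pass from $\widetilde{V}_{\pm 1}$ to $V_{\pm 1}$ by the Duhamel formula, exactly as in the proof of Lemma \ref{lemma.growth.freesolution.smooth}:
\[
	V_{\pm 1}(0,s)Q_0^{-1}(0,\cdot)\vec{e}_{\pm}^{\,{\rm Ai}} = \widetilde{V}_{\pm 1}(0,s)Q_0^{-1}(0,\cdot)\vec{e}_{\pm}^{\,{\rm Ai}} - \int_0^s \e^{2/3}\widetilde{V}_{\pm 1}(\tau,s)(Q_0^{-1}\d_t Q_0)(\e^{2/3}\tau)V_{\pm 1}(0,\tau)Q_0^{-1}(0,\cdot)\vec{e}_{\pm}^{\,{\rm Ai}}\, d\tau.
\]
Factoring out $s^{-1/4}\exp(\int_0^s \g_0 \tau^{1/2}d\tau)$ and using the propagator bound \eqref{growth.propa.airy} for $\widetilde{V}_{\pm 1}(\tau,s)$, a Gronwall estimate in the quotient variable gives a multiplicative correction of the form $\exp(C \e^{2/3} s)$, which is bounded since $\e^{2/3} s$ is small in our window. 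Going back through the definition of $V_n$ and then of $U^{{\rm Ai}}_{\pm 1}$, and writing the free solution $\f^{{\rm Ai}}(s,\theta) = U^{{\rm Ai}}(0,s)\h^{{\rm Ai}}_\e(\theta)$ using \eqref{def.initial.data.ansatz} together with the choice \eqref{def.vec.airy}, one finally obtains \eqref{growth.free.solution.airy}.

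The main obstacle is the accurate tracking of the algebraic prefactor $s^{-1/4}$: the matrix propagator bound of Lemma \ref{lemma.growth.propa.airy} only provides the rougher factor $\e^{-1/3}$, so one cannot deduce the lower bound on $|\f^{{\rm Ai}}|$ from \eqref{growth.propa.airy} alone and must instead exploit the precise asymptotic expansion of ${\rm Ai}_1$. Care is also needed to verify that the perturbation of $\widetilde{V}$ by $\e^{2/3} Q_0^{-1}\d_t Q_0$ in the Duhamel formula does not cancel the leading Airy asymptotics, which is why the initial data $\vec{e}^{\,{\rm Ai}}_{\pm}$ are calibrated on $({\rm Ai}_1(0),{\rm Ai}_1'(0))$ rather than on an eigenvector of the frozen symbol.
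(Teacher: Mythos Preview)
Your overall strategy --- pass to $\widetilde V_n$, identify the growing Airy branch on the well-chosen initial vector, read off the asymptotics, and then Duhamel back to $V_n$ and $U^{{\rm Ai}}$ --- is exactly the paper's. But the scalar reduction you write is incorrect and this would derail the computation if carried out as stated. With $\eta=1/2$ the lower-left entry of the frozen matrix is $-\g_0^2\e^{1/3}s$, not $-\g_0^2 s$; eliminating the second component then gives $w'' = (in\e^{-1/3})(-in\e^{1/3}\g_0^2 s)\,w = (n\g_0)^2 s\, w$, the \emph{real, $\e$-free} Airy equation \eqref{def.Airy.n}. There is no factor $i$ and no $\e^{-1/3}$ left, so your proposed rescaling $z=\g_0^{2/3}\e^{-1/9}(\mp i s)$ is unnecessary and incorrect. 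The two fundamental solutions are ${\rm Ai}_n(s)$ (decaying) and ${\rm Ai}_n(js)$ (growing), and it is the latter that carries the rate $\exp(\tfrac{2}{3}|n|\g_0 s^{3/2})$.

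The second gap is that you leave implicit the key algebraic identity. The paper does not argue abstractly that $\vec e_{\pm}^{\,{\rm Ai}}$ ``matches Wronskian data on the growing branch''; it computes, from the explicit formulas \eqref{matrix.U.Ai} for the entries of $\widetilde V_{\pm 1}(0,s)$, the exact equality
\[
	\widetilde V_{+1}(0,s)\begin{pmatrix}{\rm Ai}_1(0)\\ -i\e^{1/3}j\,{\rm Ai}_1'(0)\end{pmatrix}
	=\begin{pmatrix}{\rm Ai}_1(js)\\ -i\e^{1/3}j\,{\rm Ai}_1'(js)\end{pmatrix},
\]
and similarly for $n=-1$. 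This is what pins down the prefactor: by Lemma \ref{lemma.asymptotics} the first component is $\approx s^{-1/4}e^{(2/3)\g_0 s^{3/2}}$ while the second is $\approx \e^{1/3}s^{1/4}e^{(2/3)\g_0 s^{3/2}}$, and since $\e^{1/3}s^{1/4}<s^{-1/4}$ for $s<\e^{-2/3}$ the $L^\infty$ norm is exactly $\approx s^{-1/4}e^{(2/3)\g_0 s^{3/2}}$. Your Duhamel/Gronwall passage from $\widetilde V$ to $V$ is fine and matches the paper.
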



\noindent We postpone the proof of this Lemma to Section \ref{subsection.free} of the Appendix. 

\begin{remark}
	\label{remark.flat.sharp}
	Note that, on the contrary of {\rm \cite{morisse2016I}}, in each case there holds $\g_{\eta}^{\sharp} = \g_{\eta}^{\flat}$. This is due to the fact that we do not consider here the full varying-coefficient operator $\und{A}(\e^{1/(1+\eta)} s , x)\d_{\theta }$ but the reduced operator $\und{A}^{\eta}(s)\d_{\theta }$, which is a first-order approximation of operator $\und{A}(\e^{1/(1+\eta)} s , x)\d_{\theta }$.
\end{remark}


%
	%
%
%
%


\subsection{Fixed point equation}


Using the propagator $U^{\eta}(s',s,\theta)$, the free solution \eqref{free.solution.eta} and the Duhamel formula, we can express now \eqref{equation.u.compact} as the fixed point equation
\be
	\label{fixed.point.equation}
	\u(s,x,\theta) = \f^{\eta}(s,\theta) + \int_{0}^{s} U^{\eta}(s',s,\theta) {\mathbf G}^{\eta}(s',\u(s'x,\theta)) ds'
\ee

\noindent where $ {\mathbf G}^{\eta}(\u)$ is defined by \eqref{def.G.eta}. We denote the integral term
\be
	\label{defi.T}
	T^{\eta}(s,\u) = \int_{0}^{s} U^{\eta}(s',s)  {\mathbf G}^{\eta}(s',\u(s')) ds'
\ee

\noindent which we split into three parts thanks to definition \eqref{def.G.eta} like
\begin{align}
	& \quad T^{\eta}(s,\u) \\
	& = \int_{0}^{s} U^{\eta}(s',s) \left[ \left(\e^{-\eta/(1+\eta)}\RR^{\eta} + \e^{(2-\eta)/(1+\eta)}\A_{u}\cdot\u\right)\d_{\theta} \u + \e^{1/(1+\eta)} \left(\A\cdot \d_{x} \u + \F\right) \right] ds' \nonumber \\
	& = T^{[\eta, \theta]}(s,\u) + T^{[\eta, x]}(s,\u) + T^{[\eta, \u]}(s,\u) \label{defi.T.decoupage}
\end{align}

\noindent where we define
\begin{eqnarray}
	T^{[\eta,\theta]}(s,x,\u) & = & \int_{0}^{s} U^{\eta}(s',s) \,\left[\e^{-\eta/(1+\eta)}\RR^{\eta} + \e^{(2-\eta)/(1+\eta)}\A_{u}\cdot\u\right] \,\d_{\theta} \u(s') ds' \label{def.T_theta.eta}\\
	T^{[\eta, x]}(s,x,\u) & = & \int_{0}^{s} U^{\eta}(s',s)\, \left[\e^{1/(1+\eta)} \A(s',x,\u(s'))\right] \cdot \d_{x} \u(s') ds' \label{def.T_x.eta} \\
	T^{[\eta, \u]}(s,x,\u) & = & \int_{0}^{s} U^{\eta}(s',s) \, \left[\e^{1/(1+\eta)} \F(s',x,\u(s'))\right] \, \u(s') ds' \label{def.T_u.eta}
\end{eqnarray}

%




We have now reduced the initial question of finding a family of initial data $h_{\e}$ generating a family of appropriately growing analytic solutions $u_{\e}$ to the fixed point equation \eqref{fixed.point.equation} for operator $T^{\eta}$. In order to prove Theorems \ref{theorem.smooth} and \ref{theorem.airy} we refer to the proof of Gevrey instability in the case of initial ellipticity in \cite{morisse2016I}. A sketch of the proof can be found in Section 3.6 therein.

\section{Contraction estimates}
\label{section.spaces}

We make use of spaces $\EE$ constructed in Section 4 in \cite{morisse2016I} and their properties developed in Section 5 therein. The method is robust enough to be used in our context of transitions from hyperbolicity to ellipticity. 


\subsection{Functional spaces: definitions}
\label{subsection.majoring}



%
%
%

We refer to Section $4.1$ of \cite{morisse2016I} for definition and properties of the majoring series relation denoted by $\prec$. We recall the definition of $\Phi$ as the reference series in one variable 
$$
 \Phi(z) = \sum_{k\geq 0 } \frac{c_0}{k^2+1} z^{k} 
$$

\noindent with $c_0>0$ such that $\Phi^2 \prec \Phi$. We recall the notation 
\be
	\label{def.Phi_k.t}
	\Phi_{k}(\rho t) = R^{|k|}\sum_{p\in\N} \frac{c_0}{(|k|+p)^2+1} \binom{|k|+p}{k,p} \rho^p t^p  \quad , \quad \forall k\in\N^{d}
\ee

\noindent of the positive coefficients of $\Phi(RX_1+\cdots+RX_d+\rho t)$, with $R$ and $\rho$ both positive parameters. From now on, we will denote for convenience and with an abuse of notation $\Phi(RX+\rho t) = \Phi(RX_1+\cdots+RX_d+\rho t) $. 

Recall that, for any formal series $\phi(t,x) = \sum_{k\in\N^{d}} \phi_{k}(t) x^{k}$ in the $x$ variable, with $t$ a parameter, the notation $\phi(t,x) \prec_{t} \Phi(R X + \rho t)$ means
$$
	|\phi_{k}(t)| \leq \Phi_{k}(\rho t) \quad , \quad \forall \, k\in\N^{d} \; , \; \forall t < \rho^{-1} .
$$

We consider trigonometric series in one variable $\theta$ with coefficients in the space of formal series in $d$ variables $x$ in the sense of Section $4.1$ of \cite{morisse2016I}, and we denote $F_{2d+1}$ the space of all such trigonometric series:
$$
	F_{2d+1} = \left\{ \v(x,\theta) = \sum_{n\in\mathbb{Z}} \v_n(x) e^{in\theta} \;\Big|\; \v_n(x) = \sum_{k\in\N^{d}} \v_{n,k} x^{k} \right\} .
$$ 

We now define fixed time spaces $\EE_{s}$ as in \cite{morisse2016I}, which we slightly modified as to take into account two differences:
\begin{itemize}
	\item The renormalization in time is $t = \e^{1/(1+\eta)} s$ instead of $t = \e s$ in \cite{morisse2016I}.
	\item The growth of the propagator as described in Lemmas \ref{lemma.growth.propa.smooth} and \ref{lemma.growth.propa.airy} still depends on a nonnegative function $\g_{\eta}^{\sharp}(\tau)$, and spaces $\EE$ are precisely developed on such functions. The difference lies in the final growth of time, as it will be more precise in \eqref{def.und.s.1}.
\end{itemize}

\begin{defi}[Fixed time spaces $\EE_{s}$]
	\label{definition.fixed.time}
	Given $\eta\geq 0$, $M'>0$, $R>0$, $\rho >0$, $\beta \in (0,1)$ and $s\in[0,\left(\e^{1/(1+\eta)}\rho\right)^{-1})$, we denote $\EE_{s} = \EE_{s}(\eta,R,\rho,M', \beta)$ the space of trigonometric series $\v\in F_{2d+1}$ such that for some constant $C>0$ there holds
	\be
		\label{defi.EEs}
		\v_n(x) \prec C\frac{\displaystyle c_1}{\displaystyle n^2+1} \exp\left( {}- \left(M'- \int_{0}^{s} \g_{\eta}(\tau) d\tau \right)\left< n \right>\right) \Phi\left(R X+\e^{1/(1+\eta)}\rho s\right) \quad , \quad \forall \,n \in\Z
	\ee
	
	\noindent where we denote 
	\be
		\label{def.gamma.eta}
		\g_{\eta}(\tau) = \g_{\eta}^{\sharp}(\tau) + \beta . 
	\ee
	
	\noindent We define a norm on $\EE_{s}$ by
	\be
		\label{defi.norm.s}
		\|\v\|_{s} = \inf \left\{ C>0 \,|\, \quad \eqref{defi.EEs} \text{  is satisfied  } \right\}
	\ee
\end{defi}

As in the discussion following the Definition 4.7 of \cite{morisse2016I}, we introduce the growth time $\und{s}_1(\eta)$ defined implicitely as
\be
	\label{def.und.s.1}
	M' = \int_{0}^{\und{s}_1(\eta)} \g_{\eta}(\tau) d\tau 
\ee

\noindent and the final time as
\be
	\label{finaltime}
	\und{s}(\eta) = \min\left\{\und{s}_1(\eta), \left(\e^{1/(1+\eta)}\rho\right)^{-1}\right\} 
\ee

\noindent where $\left(\e^{1/(1+\eta)}\rho\right)^{-1}$ is the regularity time. To simplify the notations, in all the following we will omit the parameters $R$, $\rho$, $M'$ and $\beta$ in $\EE_{s}(\eta,R,\rho,M',\beta)$. 

We consider now trigonometric series
$$
	\u(s,x,\theta) = \sum_{n\in\Z} \u_n(s,x) e^{in\theta}
$$

\noindent with coefficients $\u_n(s,x)$ being formal series in $x$ whose coefficients depend smoothly on $s\in[0,\und{s}(\eta)[$. We denote $F_{2d+2}$ the space of all such trigonometric series:
$$
	F_{2d+2} = \left\{ \u(s,x,\theta) = \sum_{n\in\Z} \u_n(s,x) e^{in\theta} \;\Big|\; \u_n(s,x) = \sum_{k\in\N^{d}} \u_{n,k}(s)x^{k} \; \text{with } \u_{n,k}(s) \,C^{\infty} \text{ in } s \right\}
$$

\begin{defi}[Spaces $\EE$]
	We introduce
	\be
		\label{defi.space}
		\EE = \left\{\u\in F_{2d+2} \,|\, \forall \, 0\leq s <\und{s}(\eta) \, , \quad \u(s)\in\EE_{s} \right\}
	\ee
	\noindent and the corresponding norm
	\be
		\label{defi.norm}
		||| \u ||| = \sup_{0\leq s < \und{s}(\eta)} \|\u(s)\|_s
	\ee
\end{defi}

Note that $\u$ being in $\EE$ is equivalent to
\be
	\label{esti.u}
	\u_n(s,x) \prec_{s} |||\u||| \frac{\displaystyle c_1}{\displaystyle n^2+1} \exp\left(\left(M'- \int_{0}^{s} \g_{\eta}(\tau) d\tau \right)\left< n \right>\right) \Phi\left(R X+\e^{1/(1+\eta)}\rho s\right) 
\ee

\noindent for all $n\in\Z$ and $s\in[0,\und{s})$.

For $\u$ valued in $\mathbb{C}^{N}$, $\u\in\EE$ means simply that each component of $\u$ is in $\EE$, and $||| \u |||$ is then the maximum of the norms of the components. We denote the ball of $\EE$ of radius $a$, centered in $\u\in\EE$ by
\be
	\label{defi.ball}
	B_{\EE}(\u,a) = \left\{ \v\in\EE \;|\; |||\v - \u||| < a \right\}
\ee


\subsection{Functional spaces: properties}


\subsubsection{Basic properties}

We remind here basic properties of spaces $\EE$. The proofs are the same as in \cite{morisse2016I}, as those properties depend only on the nonnegativity of $\g_{\eta}^{\sharp}$.

\begin{prop}[Properties of $\EE_s(\eta)$ and $\EE(\eta)$.]
	\label{prop.properties}

	For any $\eta \in [0,1]$ and $s\in[0,\und{s}(\eta))$, there holds
	\begin{enumerate}
		\item The space $\EE_{s}(\eta)$ is an algebra, and for any $\v$ and $\w$ in $\EE_{s}(\eta)$ there holds
			\be
				\label{esti.algebra.s}
				||\v \w||_{s} \leq ||\v||_{s} \, ||\w||_{s} .
			\ee
		\item The space $\EE_{s}(\eta)$ is a Banach space.
	\end{enumerate}
	
	As an immediate corollary, there holds
	\begin{enumerate}
		\item The space $\EE(\eta)$ is an algebra, and for any $\v$ and $\w$ in $\EE(\eta)$ there holds
			\be
				\label{esti.algebra}
				|||\v \w||| \leq |||\v||| \, |||\w||| .
			\ee
		\item The space $\EE(\eta)$ is a Banach space.
	\end{enumerate}
	
\end{prop}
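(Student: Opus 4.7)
The plan is to transcribe the argument from \cite{morisse2016I} with essentially no change: the definition of $\EE_{s}(\eta)$ is engineered so that submultiplicativity follows from three independent ingredients, and only one of them ($\g_{\eta}^{\sharp}$) differs between the smooth and Airy settings, but only through its explicit form (which does not enter the algebra/completeness proofs). So the strategy is to trace through those ingredients and check each still applies here.

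For the algebra inequality \eqref{esti.algebra.s} at fixed $s\in[0,\und{s}(\eta)[$, I would take $\v,\w\in\EE_{s}(\eta)$, expand the Fourier convolution $(\v\w)_n(x;\xi)=\sum_{m\in\Z}\v_m(x;\xi)\w_{n-m}(x;\xi)$, and apply the defining majorant \eqref{defi.EEs} to each factor. Multiplicativity of the relation $\prec$ majorizes each summand by
\[
\|\v\|_s\|\w\|_s\,\frac{c_1^2}{(m^2+1)((n-m)^2+1)}\,\exp\!\Bigl(\bigl(M'-\!\int_0^s\!\g_{\eta}\bigr)(\langle m\rangle+\langle n-m\rangle)\Bigr)\,\Phi\bigl(RX+\e^{1/(1+\eta)}\rho s\bigr)^2 .
\]
Three standard facts then collapse this back to the target majorant: first, the self-convolution property $\Phi^{2}\prec\Phi$ built into the choice of $c_0$, which eliminates the square; second, the interplay between $\langle m\rangle+\langle n-m\rangle$ and $\langle n\rangle$ and the sign of $M'-\int_0^s\g_{\eta}$ on $[0,\und{s}_1(\eta)[$ fixed by \eqref{def.und.s.1}, which reduces the exponential to $\exp((M'-\int_0^s\g_{\eta})\langle n\rangle)$; third, the elementary numerical bound $\sum_{m}c_1/((m^2+1)((n-m)^2+1))\leq 1/(n^2+1)$, valid once $c_1$ has been fixed small enough independently of $\eta$. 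This yields \eqref{esti.algebra.s}.

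For completeness of $\EE_{s}(\eta)$, a Cauchy sequence $(\v^{(k)})_k$ has, via \eqref{defi.EEs} applied to $\v^{(k)}-\v^{(\ell)}$, each scalar Taylor coefficient $\v^{(k)}_{n,\a}(\xi)$ Cauchy in $\C$ uniformly in $\xi\in B_{\e}(\xi_0)$. Define $\v$ coefficient-wise as the pointwise limit; then letting $\ell\to\infty$ in the majoring estimate for $\v^{(k)}-\v^{(\ell)}$ shows both that $\v\in\EE_{s}(\eta)$ and that $\|\v^{(k)}-\v\|_s\to 0$.

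The statements for $\EE(\eta)$ are then immediate corollaries of the fixed-$s$ statements by taking the supremum over $s\in[0,\und{s}(\eta)[$: the submultiplicativity $\|\cdot\|_s$ gives $|||\v\w|||\le|||\v|||\,|||\w|||$, and a Cauchy sequence in $\EE(\eta)$ is, uniformly in $s$, Cauchy in each $\EE_s(\eta)$, so the limits assemble into an element of $\EE(\eta)$ with convergence in $|||\cdot|||$. The main (and only) delicate point is the bookkeeping in ingredient (ii) of the algebra step, which depends on the sign of $M'-\int_0^s\g_\eta$ and the structural properties of $\langle n\rangle$; since $\g_{\eta}(\tau)=\g_{\eta}^{\sharp}(\tau)+\beta$ is nonnegative and $\und{s}_1(\eta)$ is defined precisely so that this sign is controlled on $[0,\und{s}(\eta)[$, the argument of \cite{morisse2016I} applies verbatim, simultaneously covering the smooth ($\eta=1$) and Airy ($\eta=1/2$) cases.
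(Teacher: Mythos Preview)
Your proposal is correct and matches the paper's approach exactly: the paper gives no self-contained proof here but simply states that the argument of \cite{morisse2016I} carries over verbatim because it depends only on the nonnegativity of $\g_{\eta}^{\sharp}$, which is precisely the observation you make and flesh out. Your sketch of the three ingredients (the inequality $\Phi^2\prec\Phi$, the control of the exponential weight via the sign of $M'-\int_0^s\g_\eta$ on $[0,\und{s}(\eta)[$, and the discrete convolution bound for the prefactor $c_1/(n^2+1)$) is the expected content of the cited proof, so there is nothing to add.
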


The action of analytic function $H(t,x,u)$ on the space $\EE$, described in the Lemma 4.13 in \cite{morisse2016I}, still holds as it relies on properties of the majoring series relation and on the definition of $\Phi$ described in Section 4.1 of \cite{morisse2016I}.

\begin{lemma}
	\label{lemma.holomorphic}
	Let $H(t,x,u)$ be an analytical function on a neighborhood of $(0,0,0)\in\R_{t}\times\R_{x}^{d}\times\R_{u}^{N}$. There are constants $C_H>0$, $R_H>0$ and $\rho_H>0$ which depend only on $H$ and $c_0$, such that for all $R\geq R_H$ and $\rho\geq\rho_H$ and for $\e$ small enough,  
	\be
		\label{esti.holomorphic}
		\forall \,\u\in B_{\EE(\eta,R,\rho)}(0,1): \quad |||{\mathbf H}(\cdot,\cdot,\u)|||\lesssim 1
	\ee
	
	\noindent where $\H$ is defined by \eqref{def.mathbf.H} and $|||\cdot|||$ is defined by \eqref{defi.norm}.

\end{lemma}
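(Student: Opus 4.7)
The plan is to reduce the claim to the algebra estimate of Proposition \ref{prop.properties} together with three elementary bounds on the norms of the ``generators'' $\e^{1/(1+\eta)}s$, $x_j$, and $\u$, and then to sum the resulting series using the analyticity of $H$. Since $H$ is analytic in a neighborhood of $(0,0,0)$, there exist $C_H,r_H>0$ such that
\begin{equation*}
	H(t,x,u) = \sum_{a\in\N,\,b\in\N^{d},\,c\in\N^{N}} h_{a,b,c}\, t^{a} x^{b} u^{c}
	\qquad\text{with}\qquad |h_{a,b,c}| \leq C_H \, r_H^{-(a+|b|+|c|)}.
\end{equation*}
Substituting into the definition \eqref{def.mathbf.H} of $\H$ one obtains formally
\begin{equation*}
	{\mathbf H}(s,x,\u) = \sum_{a,b,c} h_{a,b,c}\, \e^{(a+2|c|)/(1+\eta)}\, s^{a} x^{b} \u^{c}.
\end{equation*}
By the algebra estimate \eqref{esti.algebra}, it will suffice to majorize $|||{\mathbf H}(\u)|||$ by the corresponding series in $|||\e^{1/(1+\eta)} s|||^{a}\, |||x|||^{|b|}\,|||\u|||^{|c|}$, with an appropriate factor in $\e$.

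The key step is then to bound the three elementary norms. For $\u\in B_{\EE}(0,1)$, by definition $|||\u|||\leq 1$. For the monomial $x_j$, observing that its only Fourier coefficient ($n=0$) is dominated by the $x_j$-coefficient of $\Phi(RX+\e^{1/(1+\eta)}\rho s)$, which is at least $c_{0}R/2$ (using the definition \eqref{def.Phi_k.t}), one obtains $|||x_j|||\leq C/R$ for a constant $C$ depending only on $c_0,c_1,M'$. Similarly, for the scalar function $\e^{1/(1+\eta)} s$, comparing with the $s$-coefficient of $\Phi(\e^{1/(1+\eta)}\rho s)$, which equals $c_{0}\,\e^{1/(1+\eta)}\rho/2$, one gets $|||\e^{1/(1+\eta)} s|||\leq C/\rho$. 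These are precisely the estimates used in the proof of the corresponding Lemma~4.13 of \cite{morisse2016I}; they rely only on the normalization of $\Phi$ and on the fact that $\g_{\eta}\geq 0$, so the exponential weight $\exp(M'-\int_0^s \g_\eta)$ is bounded below by a positive constant on $[0,\und{s}(\eta)[$.

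Combining these ingredients, the algebra inequality \eqref{esti.algebra} applied iteratively gives
\begin{equation*}
	|||{\mathbf H}(\u)||| \;\leq\; \sum_{a,b,c} |h_{a,b,c}|\, \e^{2|c|/(1+\eta)}\, \bigl(C/\rho\bigr)^{a} \bigl(C/R\bigr)^{|b|} \cdot 1^{|c|}
	\;\leq\; C_H \sum_{a,b,c} \Bigl(\tfrac{C}{r_H\rho}\Bigr)^{a}\Bigl(\tfrac{C}{r_H R}\Bigr)^{|b|}\Bigl(\tfrac{\e^{2/(1+\eta)}}{r_H}\Bigr)^{|c|}.
\end{equation*}
Choosing $R\geq R_H := 2C/r_H$ and $\rho\geq\rho_H := 2C/r_H$, and $\e$ small enough so that $\e^{2/(1+\eta)}/r_H \leq 1/2$, every geometric sum is bounded by a constant, and one concludes $|||{\mathbf H}(\u)|||\lesssim 1$ uniformly in $\u\in B_{\EE}(0,1)$.

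The only point requiring mild care is the majorization step, i.e.\ verifying that the formal series defining $\H(\u)$ is indeed in $\EE$ and that its norm is controlled by the corresponding series of norms. This is a routine application of the properties of the majoring relation $\prec$ recalled from Section~4.1 of \cite{morisse2016I} (in particular $\Phi^2\prec\Phi$ to absorb products of powers of $\Phi$), combined with the fact that $\e^{1/(1+\eta)}\rho s$ appears in the argument of $\Phi$ precisely to handle the time variable; I expect this to be the main bookkeeping obstacle, but no new idea compared to \cite{morisse2016I} is needed, since the only modifications here are the replacement of the scaling $t=\e s$ by $t=\e^{1/(1+\eta)}s$ and the appearance of the parameter $\xi\in B_\e(\xi_0)$, both of which enter as harmless uniform estimates.
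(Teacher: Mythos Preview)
Your proposal is correct and follows precisely the approach of Lemma~4.13 in \cite{morisse2016I}, which is exactly what the paper invokes here: the paper gives no independent proof of this lemma but simply states that the argument of \cite{morisse2016I} carries over verbatim, the only changes being the time rescaling $t=\e^{1/(1+\eta)}s$ and the harmless extra parameter $\xi$. Your sketch (power-series expansion of $H$, elementary bounds $|||x_j|||\lesssim R^{-1}$ and $|||\e^{1/(1+\eta)}s|||\lesssim\rho^{-1}$, algebra estimate, geometric summation) is that argument.
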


In the operators $T^{[\theta]}$, $T^{[x]} $ and $T^{[\u]} $ defined by \eqref{def.T_theta.eta}, \eqref{def.T_x.eta} and \eqref{def.T_u.eta}, there appears $A$, ${\rm R}^{\eta}$, $A_{u}$ and $F$, all of which are analytic functions in the variables $(t,x,u)\in\R\times\R^{d}\times\R^{N}$. In the expansion formulas of both ${\rm R}^{{\rm S}}$ and ${\rm R}^{{\rm Ai}}$ there appear also analytical functions ${\rm R}^{{\rm S}}_t$, ${\rm R}^{{\rm S}}_t$, ${\rm R}^{{\rm Ai}}_t$, ${\rm R}^{{\rm Ai}}_x$ and ${\rm R}^{{\rm Ai}}_e$ as in Lemmas \ref{lemma.expansion.smooth} and \ref{lemma.expansion.airy}. The previous Lemma applies:

\begin{coro}
	\label{coro.norms}
	There are constants $R_0$ and $\rho_0$ such that for all $\eta \geq 0$, $R\geq R_0$, $\rho \geq \rho_0$ and $\e$ small enough:
	\be
		\label{action.A}
		\forall \,\u\in B_{\EE(\eta,R,\rho)}(0,1): \qquad |||\H(\cdot, \cdot, \u) ||| \lesssim 1
	\ee
	\noindent with $H$ either equals to $A$, $\und{A}$, ${\rm R}^{{\rm S}}_{t}$, ${\rm R}^{{\rm S}}_{x}$,  ${\rm R}^{{\rm Ai}}_{t}$, ${\rm R}^{{\rm Ai}}_{x}$, ${\rm R}^{{\rm Ai}}_{t_{\star}}$, $A_u$ or $F$.
\end{coro}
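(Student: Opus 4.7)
The claim is essentially a bookkeeping consequence of Lemma \ref{lemma.holomorphic}: each of the functions listed is analytic in a neighborhood of $(0,0,0)\in\R_t\times\R_x^d\times\R_u^N$, and the list is finite, so we just apply Lemma \ref{lemma.holomorphic} to each one and take maxima of the resulting constants.

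The first step is to verify analyticity of every function in the list. The symbol $A\cdot\xi_0=\sum_j A_j(t,x,u)\,\xi_{0,j}$ is analytic by the standing hypothesis on the $A_j$; setting $u=0$ does not alter this. The matrices $A_u$ come from the Taylor expansion \eqref{def.A_u} of $A(t,x,u,\xi)$ in $u$ around $u=0$, and hence are analytic in $(t,x)$ (and in $\xi$ near $\xi_0$). The forcing $F$ is analytic, coming from \eqref{Cauchy.bis} via the standing analyticity of $f$. The remainder ${\rm R}^\eta$ is analytic in both cases: in the smooth case ${\rm R}^{\rm S}$ is defined in \eqref{def.rest.smooth} via the analytic change of basis $Q_0$ and the analytic discriminant $\delta$; in the Airy case ${\rm R}^{\rm Ai}$ is given in \eqref{def.rest.airy} in terms of $Q_0$, $e$ and $t_\star$, all analytic. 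The coefficients $r_t^{\rm S}$, $r_x^{\rm S}$, $e_t$, $e_x$ of the Taylor expansions carried out in Lemmas \ref{lemma.expansion.smooth} and \ref{lemma.expansion.airy} are analytic for the same reason (Taylor coefficients of analytic functions are analytic).

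With analyticity in hand, for each function $H$ in the list Lemma \ref{lemma.holomorphic} supplies constants $C_H$, $R_H$, $\rho_H$ (depending only on $H$ and $c_0$) such that
$$ |||\,\H(\cdot,\cdot,\u)\,||| \lesssim 1 \quad \text{whenever } \u \in B_{\EE(\eta,R,\rho)}(0,1), \; R\geq R_H,\; \rho\geq\rho_H, $$
and $\e$ is sufficiently small. Setting $R_0=\max_H R_H$ and $\rho_0=\max_H \rho_H$ over the finite list of $H$'s, these choices work uniformly, which is exactly \eqref{action.A}. The only mild subtlety to keep in mind is that several of the functions also depend on $\xi\in B_\e(\xi_0)$; since the dependence is continuous and $\xi$ ranges over a shrinking compact neighborhood of $\xi_0$, the majoring-series bounds underlying Lemma \ref{lemma.holomorphic} can be chosen uniform in $\xi$, matching the uniformity in $\xi$ already built into the norm \eqref{defi.norm.s}. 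There is no real obstacle here; the work has been done once and for all in Lemma \ref{lemma.holomorphic}, and the corollary just packages that bound for the handful of specific analytic functions used in the construction of $T^{[\eta,\theta]}$, $T^{[\eta,x]}$ and $T^{[\eta,\u]}$.
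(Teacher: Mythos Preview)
Your proof is correct and matches the paper's approach exactly: the paper does not give a separate proof for this corollary, introducing it simply with ``The previous Lemma applies,'' which is precisely the bookkeeping you carry out. Your additional remarks on analyticity of each listed function and uniformity in $\xi$ are accurate and merely make explicit what the paper leaves implicit.
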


\subsubsection{Action of propagators on $\EE$}
\label{subsection.action}

To describe the action of both propagators $U^{{\rm S}}$ and $U^{{\rm Ai}}$, we define here more general smooth matrix operators $U^{\eta}(s',s,\theta)$ for $\eta \geq 0$ that act diagonally on $\u\in\EE$ as
$$
	\left(U^{\eta}(s',s,\theta) \u(s')\right)_n = U_n^{\eta}(s',s) \u_n(s') \quad , \quad \forall n\in\Z, \;  0 \leq s' \leq s
$$

\noindent and satisfied the upper bound for their Fourier modes
\be
	\label{growth.U.eta}
	|U^{\eta}_n(s',s)| \lesssim C(U^{\eta}) \exp\left( \int_{s'}^{s} \g_{\eta}^{\sharp}(\tau) d\tau \right) \quad , \quad \forall n\in\Z,\; 0 \leq s' \leq s.
\ee

\noindent for some $C(U^{\eta}) > 0$ depending eventually on $\e$. In the smooth case, thanks to Lemma \ref{lemma.growth.propa.smooth} we have $\eta = 1$ and
\be
	\label{def.CU.smooth}
	C(U^{{\rm S}}) = 1 .
\ee

\noindent whereas for the Airy case, thanks to Lemma \ref{lemma.growth.propa.airy} we have $\eta = 1/2$ and
\be
	\label{def.CU.Airy}
	C(U^{{\rm Ai}}) = \e^{-1/3}
\ee

\noindent For such matrix operators $U^{\eta}$, the same result as Lemma 4.15 in \cite{morisse2016I} still holds:

\begin{lemma}
	\label{lemma.action.U}
	Given $\eta \geq 0$, $\beta > 0$ and $\u$ in $\EE(\eta,\beta)$ then
	\be
		\label{precise.bound.action.U}
		U^{\eta}_n(s',s)\u_n(s',x) \prec_{s',s} C(U^{\eta})\,C_n^{\eta}(s',s) \, ||\u(s')||_{s'} \frac{\displaystyle c_1}{\displaystyle n^2+1} e^{- \left(M' - \int_{0}^{s} \g_{\eta}(\tau) d\tau \right) \left< n \right> } \Phi\left(R X+\e^{1/(1+\eta)}\rho s\right) 
	\ee
	
	\noindent where $C_n^{\eta}(s',s)$ is defined by
	\be 
		\label{defi.Cn}
		C_n^{\eta}(s',s) = \exp\left( - \beta (s-s') \left<n\right> \right) \leq 1
	\ee
	
	\noindent In particular we have
	\be
	\label{bound.lemma.action.U}
		\|U^{\eta}(s',s)\u(s')\|_s \leq C(U^{\eta})\,\|\u(s')\|_{s'} \qquad \forall \, 0 \leq s' \leq s < \und{s}
	\ee
	
\end{lemma}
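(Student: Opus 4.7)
The plan is to adapt the proof of Lemma~4.15 of \cite{morisse2016I}, with only cosmetic modifications needed to account for the new time scaling $t=\e^{1/(1+\eta)}s$ and for the generic propagator bound \eqref{growth.U.eta} in place of the specific one used there. The critical structural observation I will exploit is that $U^{\eta}(s',s,\theta)$ acts diagonally on Fourier modes in $\theta$ and does not depend on $x$ or $\xi$; its action on any element of $\EE_{s'}$ thus reduces, at the level of majoring series, to scalar multiplication of each Fourier mode $\u_n(s',x;\xi)$ by $U_n^{\eta}(s',s)$, an operation under which the relation $\prec$ is preserved.

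First I would unfold the bound \eqref{esti.u} for $\u\in\EE(\eta,\beta)$ at time $s'$, multiply both sides by $|U_n^{\eta}(s',s)|$, and plug in the estimate \eqref{growth.U.eta}. Next, using that all Taylor coefficients of $\Phi$ are nonnegative and $s'\leq s$, I would upgrade the time argument of $\Phi$ via $\Phi(RX+\e^{1/(1+\eta)}\rho s')\prec\Phi(RX+\e^{1/(1+\eta)}\rho s)$. The remaining step is a bookkeeping exercise on the combined exponential factor: writing $\g_\eta=\g^{\sharp}_\eta+\beta$ and using additivity of the integrals, I would rewrite the total exponent so as to isolate the $\EE_s$-defining weight $\exp\bigl((M'-\int_0^s \g_\eta(\tau)\,d\tau)\langle n\rangle\bigr)$, leaving as residual prefactor exactly $C_n^{\eta}(s',s)$ from \eqref{defi.Cn}. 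A direct check then confirms that this prefactor is bounded by $1$ uniformly in $n$ for $s'\leq s$.

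This yields the pointwise estimate \eqref{precise.bound.action.U}. The norm inequality \eqref{bound.lemma.action.U} will then follow by taking the supremum over $n\in\Z$, $x$, $\xi$ and $s\in[0,\und{s}(\eta))$, since the factor $C_n^{\eta}(s',s)\leq 1$ absorbs all $n$-dependence and only the loss factor $C(U^{\eta})$ survives. I do not expect any real obstacle here: the safety margin $\beta>0$ built into $\g_\eta$ in \eqref{def.gamma.eta} is precisely what makes the residual prefactor $\leq 1$, compensating for the fact that the propagator bound involves $\g^{\sharp}_\eta$ while the space is defined with the strictly larger $\g_\eta$.
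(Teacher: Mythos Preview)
Your proposal is correct and follows exactly the route the paper indicates: it explicitly defers to Lemma~4.15 of \cite{morisse2016I}, noting that the argument rests only on the decomposition $\g_\eta=\g_\eta^{\sharp}+\beta$, and your sketch (diagonal action on Fourier modes, propagator bound \eqref{growth.U.eta}, monotonicity of $\Phi$ in $s$, then algebraic regrouping of the exponent to isolate the $\EE_s$ weight and the residual $e^{-\beta(s-s')\langle n\rangle}$) is precisely that computation. One caveat worth flagging when you write it out: as printed, \eqref{defi.Cn} is missing a minus sign in the exponent (otherwise the claim $C_n^\eta\leq 1$ is false for $s\geq s'$); your ``direct check'' will in fact produce $\exp(-\beta(s-s')\langle n\rangle)$, consistent with the reference to (4.31) in \cite{morisse2016I}.
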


\noindent The proof is exactly the same as in \cite{morisse2016I}, as it relies only on the definition \eqref{def.gamma.eta} of $\g_{\eta}$. The positive constant $\beta$ acts as a perturbation of $\g_{\eta}^{\sharp}$ and introduces an error term like $e^{-\beta (s-s') |n|}$ in the growth of the $n$-th Fourier mode of the propagator. This explains why the prefactor term $C_n^{\eta}(s',s)$ is exactly the same as in Lemma 4.15 of \cite{morisse2016I}.

\begin{remark}
	The estimate \eqref{bound.lemma.action.U} is not precise enough to show that $T$ is a contraction in $\EE$. The more precise estimate \eqref{precise.bound.action.U} is very important for the estimate \eqref{esti.reg_dtheta} below.
\end{remark}

\subsubsection{Norm of the free solutions}

In both smooth ($\eta = 1$) and Airy ($\eta=1/2$) cases, we compute the norm in $\EE$ of the free solution $\f^{\eta}$ defined in Lemmas \ref{lemma.growth.freesolution.smooth} and \ref{lemma.growth.freesolution.airy}. The proof of this result is the same as Lemma 4.17 of \cite{morisse2016I}, using the precise estimates described in Lemmas \ref{lemma.growth.freesolution.smooth} and \ref{lemma.growth.freesolution.airy}.

\begin{lemma}[Norm of the free solution]
	\label{lemma.norm.free.solution}
	For $\eta = 1$ or $\eta=1/2$ and $\beta >0$, the free solution $\f^{\eta}$ satisfies
	\be
		\label{norm.free.solution}
		|||\f^{\eta}||| \lesssim e^{M'-M(\e)}
	\ee
\end{lemma}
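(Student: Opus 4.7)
The plan is to proceed as in Lemma $4.17$ of \cite{morisse2016I}. The initial datum \eqref{def.initial.data.ansatz} is trigonometric with only $n = \pm 1$ modes, and the propagator $U^{\eta}(0,s,\theta)$ acts diagonally on Fourier modes in $\theta$, so $\f^{\eta}(s,\theta) = U^{\eta}(0,s)\h^{\eta}_{\e}(\theta)$ also has only $n = \pm 1$ modes. It therefore suffices to verify the pointwise majoration \eqref{defi.EEs} for these two modes, with constant $C \lesssim e^{M' - M(\e)}$.

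To this end, I would invoke the pointwise estimates of Lemmas \ref{lemma.growth.freesolution.smooth} and \ref{lemma.growth.freesolution.airy}, which give
$$
|\f^{\eta}_{\pm 1}(s,x;\xi)| \lesssim \sigma_{\eta}(s)\, e^{-M(\e)} \exp\!\bigl(\textstyle\int_{0}^{s}\g^{\flat}_{\eta}(\tau)\,d\tau\bigr),
$$
with $\sigma_{{\rm S}}(s) \equiv 1$ in the smooth case and $\sigma_{{\rm Ai}}(s) = s^{-1/4}$ in the Airy case. Comparing with the bound defining $\EE_{s}$ at $n = \pm 1$, the reduction is to establish
$$
\sigma_{\eta}(s)\, e^{\int_{0}^{s}\g^{\flat}_{\eta}\,d\tau}\; \lesssim\; e^{M'} \exp\!\Bigl(\bigl(M' - \textstyle\int_{0}^{s}\g_{\eta}\,d\tau\bigr)\langle 1\rangle\Bigr)\,\Phi\bigl(RX + \e^{1/(1+\eta)}\rho s\bigr),
$$
uniformly for $s \in [0, \und{s}(\eta))$. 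Using Remark \ref{remark.flat.sharp}, namely $\g^{\flat}_{\eta} = \g^{\sharp}_{\eta}$, together with $\g_{\eta} = \g^{\sharp}_{\eta} + \beta$, the exponent on the right-hand side absorbs exactly the growth $e^{\int_{0}^{s}\g^{\flat}_{\eta}\,d\tau}$ on the left, and the remaining budget is controlled by the defining relation $\int_{0}^{\und{s}_1}\g_{\eta}\,d\tau = M'$ of \eqref{def.und.s.1}, with even some slack $e^{-\beta\und{s}_{1}}$ coming from the $\beta$-term. Since $\Phi \geq c_{0} > 0$, the exponential bookkeeping then yields the claim, with the overall constant of order $e^{M' - M(\e)}$.

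The remaining ingredient is the $(x,\xi)$-dependence of $\f^{\eta}_{\pm 1}$, which enters through the polarization vectors $\vec{e}^{\,\eta}_{\pm} = Q_{0}(0,x,\xi)(\cdots)$ and through the $x$-dependence of the propagator itself. As $Q_{0}$ is analytic and $U^{\eta}$ transports analyticity, the formal power series in $x$ of $\f^{\eta}_{\pm 1}(s,\cdot\,;\xi)$ is majored by a constant multiple of $\Phi(RX + \e^{1/(1+\eta)}\rho s)$ provided $R,\rho$ are taken large enough with respect to the radius of analyticity, by the same mechanism as in Lemma \ref{lemma.holomorphic}. The main obstacle is the Airy prefactor $\sigma_{{\rm Ai}}(s) = s^{-1/4}$, which is singular at $s = 0$ while the bound defining $\EE_{s}$ is regular there. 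I would handle this by splitting: on $[0,s_{0}]$ for some fixed small $s_{0} > 0$, one bounds $\f^{{\rm Ai}}$ directly using continuity from $\f^{{\rm Ai}}(0) = \h^{{\rm Ai}}_{\e}$, which is of order $e^{-M(\e)}$ and fits under the bound trivially; on $[s_{0}, \und{s}(1/2))$, the prefactor $s^{-1/4}$ is bounded and is absorbed in the implicit constant using the slack provided by the $\beta$-term.
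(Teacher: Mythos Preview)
Your proposal is correct and follows essentially the same approach as the paper, which simply refers to Lemma~4.17 of \cite{morisse2016I} with the updated inputs from Lemmas~\ref{lemma.growth.freesolution.smooth} and~\ref{lemma.growth.freesolution.airy}. You have in fact supplied more detail than the paper does: the explicit exponential bookkeeping using $\g^{\flat}_{\eta} = \g^{\sharp}_{\eta}$, the observation that the $s^{-1/4}$ Airy prefactor (rather than $\e^{-1/3}$) is what keeps the bound at $e^{M'-M(\varepsilon)}$, and the splitting near $s=0$ to handle the apparent singularity of that prefactor are all the right ingredients, and your treatment of the $x$-analyticity via $Q_0$ and the mechanism of Lemma~\ref{lemma.holomorphic} is the intended one.
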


\begin{remark}
	Note that, on the contrary of estimate {\rm (4.33)} in Lemma {\rm 4.17} of {\rm \cite{morisse2016I}}, the previous estimate is not $|||\f^{{\rm Ai}}||| \lesssim \e^{-1/3} e^{M'-M(\e)}$ in the Airy case, thanks to the more precise estimate {\rm \eqref{growth.free.solution.airy}}.
\end{remark}


\subsection{Estimates of remainder terms}


As pointed out in Remark \ref{remark.4}, our analysis differs from \cite{morisse2016I} with the presence of extra remainder terms $\RR^{\eta}$. We compute carefully their norms.

\begin{lemma}[Smooth case]
	\label{lemma.remainder.smooth}
	In the framework of Lemma {\rm \ref{lemma.expansion.smooth}}, the norm of the remainder term $\RR^{{\rm S}}$ satisfies
	$$
		||| \e^{-1/2}\RR^{{\rm S}} ||| \lesssim \e^{1/2} \und{s}^2 + \und{s} R^{-1} .
	$$
\end{lemma}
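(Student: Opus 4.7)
My plan is to prove this as a direct computation built on the Taylor expansion of Lemma \ref{lemma.expansion.smooth} together with the algebra and analytic-composition properties of the space $\EE$ recalled in Proposition \ref{prop.properties} and Corollary \ref{coro.norms}.

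First I would substitute the scaling $t = \e^{1/2} s$ (corresponding to $\eta = 1$) into the expansion
$$
\RR^{{\rm S}}(t,x,\xi) = t^2 \RR^{{\rm S}}_t(t,x,\xi) + t\,x\cdot \RR^{{\rm S}}_x(t,x,\xi) + t(\xi-\xi_0)\cdot \RR^{{\rm S}}_\xi(t,x,\xi),
$$
and use the notational convention \eqref{def.mathbf.H} to obtain
$$
\e^{-1/2}\, \boldsymbol{\RR}^{{\rm S}}(s,x;\xi)
= \e^{1/2} s^2\, \boldsymbol{\RR}^{{\rm S}}_t + s\, x\cdot \boldsymbol{\RR}^{{\rm S}}_x + s\,(\xi-\xi_0)\cdot \boldsymbol{\RR}^{{\rm S}}_\xi.
$$
By the triangle inequality for $|||\cdot|||$ I reduce to bounding the three contributions separately.

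Next I would apply the algebra estimate \eqref{esti.algebra} of Proposition \ref{prop.properties} to each term. For the first, $s^2$ contributes a factor $\und{s}^2$ at the fixed-time norm level, while Corollary \ref{coro.norms} gives $|||\boldsymbol{\RR}^{{\rm S}}_t||| \lesssim 1$, yielding a contribution of order $\e^{1/2}\und{s}^2$. For the third, I use $|\xi - \xi_0| \le \e$ on $B_\e(\xi_0)$ together with $|||\boldsymbol{\RR}^{{\rm S}}_\xi||| \lesssim 1$ to get a bound $\lesssim \e\,\und{s}$, which is absorbed into $\und{s} R^{-1}$ as soon as $\e \le R^{-1}$ (which holds in our regime since $R \ge R_0$ is fixed and $\e \to 0$).

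The second term is the one where the $R^{-1}$ factor appears, and the key point is that the formal series $x_j$ has norm $\lesssim R^{-1}$ in $\EE_s$. This follows directly from the definition \eqref{def.Phi_k.t}: the coefficient $\Phi_{e_j}(s)$ majorizing $x_j$ in $\Phi(RX + \e^{1/2}\rho s)$ is bounded below by $R \cdot c_0/2$, so the dominant majorant for $x_j$ is $R^{-1}$ times the reference series (up to harmless constants). Combined with $|||\boldsymbol{\RR}^{{\rm S}}_x||| \lesssim 1$ from Corollary \ref{coro.norms} and the factor $s \le \und{s}$, this yields a contribution of order $\und{s}\,R^{-1}$. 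Summing the three contributions gives the announced bound. The only subtle step is the $R^{-1}$ estimate for the norm of $x_j$; everything else is a routine application of the multiplicative and composition properties of $\EE$ already established in \cite{morisse2016I}.
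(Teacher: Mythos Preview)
Your proof is correct and essentially identical to the paper's own argument. Both substitute $t=\e^{1/2}s$ into \eqref{expansion.smooth}, use the supremum-in-time definition of $|||\cdot|||$ for the $s^2$ factor, the majorant $X_j \prec R^{-1}\Phi(RX+\e^{1/2}\rho s)$ (equivalently $\|x_j\|_s \lesssim R^{-1}$) together with the algebra property for the $x$-term, and absorb the $(\xi-\xi_0)$-contribution of order $\e\,\und{s}$ into $\und{s}R^{-1}$.
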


\begin{proof}
	By expansion formula \eqref{expansion.smooth} we have
	$$
		{\rm R}^{{\rm S}}(t,x) = t^2{\rm R}^{{\rm S}}_t(t,x) + tx\cdot{\rm R}^{{\rm S}}_x(t,x) 
	$$
	
	\noindent and then, as $t = \e^{1/2} s$ in the smooth case and by notation \eqref{def.mathbf.H},
	$$
		\e^{-1/2}\RR^{{\rm S}}(s,x) = \e^{1/2}s^2\RR^{{\rm S}}_t(s,x) + s x\cdot\RR^{{\rm S}}_x(s,x).
	$$
	
	\noindent As the norm $|||\cdot|||$ is defined by a supremum in time, first there holds
	$$
		||| \e^{1/2}s^2\RR^{{\rm S}}_t ||| \lesssim \e^{1/2}\und{s}^{2} ||| \RR^{{\rm S}}_t ||| .
	$$
	
	\noindent To get a precise estimate of the term $x\cdot\RR^{{\rm S}}_x(s,x)$, we first note that the coefficients $\Phi_k(\rho t)$ defined by \eqref{def.Phi_k.t} satisfy
	$$
		1 \leq R^{-1}\Phi_k(\rho t) \quad , \quad \forall k \in \N^{d} \text{ with } |k| = 1
	$$
	
	\noindent so that for all $j=1,\ldots,d$ there holds
	$$
		X_j \prec R^{-1}\Phi(RX + \e^{1/2}\rho s) \quad , \quad \forall 0 \leq s < \und{s}.
	$$
	
	\noindent By inequality \eqref{esti.algebra} in Proposition \ref{prop.properties} we get then
	$$
		||| x\cdot\RR^{{\rm S}}_x(s,x) ||| \lesssim R^{-1} ||| \RR^{{\rm S}} ||| .
	$$
	
	\noindent As $C(U^{{\rm S}}) = 1$ by \eqref{def.CU.smooth}, this ends the proof.
\end{proof}

\begin{lemma}[Airy case]
	\label{lemma.remainder.airy}
	In the framework of Lemma {\rm \ref{lemma.expansion.airy}}, the norm of the remainder term $\RR^{{\rm Ai}}$ satisfies
	$$
		||| \e^{-1/3} \RR^{{\rm Ai}} ||| \lesssim \e \und{s}^2 + \e^{1/3}\und{s} R^{-1} + \e^{-1/3} t_{\star}(R^{-1}) .
	$$
\end{lemma}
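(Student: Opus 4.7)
The proof will closely parallel the argument of Lemma \ref{lemma.remainder.smooth}, but with the scaling $t = \e^{2/3}s$ appropriate to $\eta = 1/2$, and with the extra $t_{\star}$-term appearing in the expansion formula \eqref{expansion.airy}. First I will substitute $t = \e^{2/3}s$ into \eqref{expansion.airy} and distribute the prefactor $\e^{-1/3}$, so that
$$
	\e^{-1/3}\RR^{{\rm Ai}}(s,x,\xi) = \e s^2 \RR^{{\rm Ai}}_t + \e^{1/3} s\, x\cdot\RR^{{\rm Ai}}_x + \e^{1/3} s\,(\xi-\xi_0)\cdot\RR^{{\rm Ai}}_{\xi} + \e^{-1/3} t_{\star}(x,\xi)\, \RR^{{\rm Ai}}_e,
$$
using the notation \eqref{def.mathbf.H} implicitly on each of the analytic coefficient matrices.

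The first three terms are handled exactly as in the smooth case. For the $s^2$-term, the supremum in time gives $|||\e s^2 \RR^{{\rm Ai}}_t||| \lesssim \e\und{s}^2$, using Corollary \ref{coro.norms} to bound $|||\RR^{{\rm Ai}}_t||| \lesssim 1$. For the $x$-term I reproduce verbatim the argument from the smooth case: the coefficients $\Phi_k(t)$ satisfy $X_j \prec R^{-1}\Phi(RX + \e^{2/3}\rho s)$ for each $j$, hence by the algebra property \eqref{esti.algebra} of Proposition \ref{prop.properties} we get $|||\e^{1/3} s\, x\cdot\RR^{{\rm Ai}}_x||| \lesssim \e^{1/3}\und{s}\,R^{-1}$. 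For the frequency remainder, the parameter $\xi$ lies in $B_{\e}(\xi_0)$, so $|\xi-\xi_0|\leq \e$, which yields $|||\e^{1/3}s\,(\xi-\xi_0)\cdot\RR^{{\rm Ai}}_{\xi}||| \lesssim \e^{4/3}\und{s}$; this contribution is absorbed into the second term of the announced bound.

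The main obstacle, and the genuinely new ingredient compared with the smooth case, is the estimate of the $t_{\star}$-term. The idea is to exploit the degeneracy assumption on $t_{\star}$. Under Assumption \ref{hypo.degenerate}, together with the vanishing of $\partial_x t_{\star}$ and $\partial_x^3 t_{\star}$ at $(0,\xi_0)$ derived just before the statement of that assumption, the analytic function $t_{\star}(x,\xi)$ vanishes to order $4$ in $x$ at $x=0$. Therefore its formal Taylor series contains only monomials $x^k$ with $|k|\geq 4$, each of which satisfies $x^k \prec R^{-|k|}\Phi(RX+\e^{2/3}\rho s)$. Summing the majorant series and using the analyticity (convergence for $|x|\lesssim R^{-1}$ once $R\geq R_0$) yields an estimate of the form $|||t_{\star}||| \lesssim t_{\star}(R^{-1})$, where the right-hand side denotes the value of $t_{\star}$ at a point of size $R^{-1}$ (i.e.\ a quantity of order $R^{-4}$). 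Combining this with $|||\RR^{{\rm Ai}}_e|||\lesssim 1$ from Corollary \ref{coro.norms} and the algebra inequality \eqref{esti.algebra} gives $|||\e^{-1/3} t_{\star}\RR^{{\rm Ai}}_e||| \lesssim \e^{-1/3} t_{\star}(R^{-1})$.

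Summing the four contributions and absorbing the smaller $\e^{4/3}\und{s}$ term produces the stated bound $|||\e^{-1/3}\RR^{{\rm Ai}}||| \lesssim \e\und{s}^2 + \e^{1/3}\und{s} R^{-1} + \e^{-1/3}t_{\star}(R^{-1})$. The delicate point, which is where the degeneracy hypothesis really enters and where the non-degenerate case $t_{\star}=O(x^2)$ would break the argument, is precisely the gain of four powers of $R^{-1}$ in the $t_{\star}$-term; this is what makes the bound compatible (for $R$ large) with the required smallness in the fixed-point argument of the Airy case and ultimately produces the smaller critical index $\sigma_0 = 2/13$ announced in Theorem \ref{theorem.airy}.
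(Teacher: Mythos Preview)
Your proof is correct and follows essentially the same approach as the paper, which simply says ``the proof is the same as the previous one, with the differences that $\eta = 1/2$ and $C(U^{{\rm Ai}}) = \e^{-1/3}$.'' You have supplied the details the paper omits, in particular for the new $t_\star$-term, and your treatment of it via the order-four vanishing of $t_\star$ in $x$ and the majorant bound $x^k \prec R^{-|k|}\Phi$ is exactly the mechanism underlying the symbol $t_\star(R^{-1})$ in the statement.
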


\begin{proof}
	The proof is the same as the previous one, with the differences that $\eta= 1/2$ and $C(U^{{\rm Ai}}) = \e^{-1/3}$.
\end{proof}


\subsection{Contraction estimates}


\subsubsection{Regularization results}

A crucial observation is that derivation operators $\d_{\theta}$ and $\d_{x_j}$ are not bounded operators in spaces $\EE$, as explained in Section 5.1 in \cite{morisse2016I}. The main results in our previous paper are the description of the regularization effect of integration in time of derivation operators. These results are precised in Sections 5.2 through 5.4 in \cite{morisse2016I}.

Those results still hold in our setting. We omit the proof of the following Lemmas, but give some indications on how to adapt the proofs of \cite{morisse2016I} of the similar results.

\begin{prop}[Regularization of $\d_{\theta}$]
	\label{reg_dtheta}
	For operator $T^{[\eta,\theta]}$ defined by \eqref{def.T_theta.eta}, for any $\u\in B_{\EE(\eta,R,\rho)}(0,1)$ and for $\beta >0$, there holds
	\be
		\label{esti.reg_dtheta}
		|||T^{[\eta,\theta]}(\u)||| \lesssim  C(U^{\eta}) \beta^{-1} \left( \e^{-\eta/(1+\eta)} |||\RR^{\eta}||| + \e^{(2-\eta)/(1+\eta) } |||\u||| \right)\, |||\u|||
	\ee
\end{prop}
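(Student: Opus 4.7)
The plan is to reduce \eqref{esti.reg_dtheta} to the combination of three ingredients already in place: the algebra property of $\EE_{s'}$ (Proposition~\ref{prop.properties}), the precise pointwise action of the propagator (Lemma~\ref{lemma.action.U}), and the $s'$-integrability of the decaying factor $\langle n\rangle\,C_n^\eta(s',s)$, which is the mechanism that compensates for the loss of one $\d_\theta$-derivative.

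First I would collect the zeroth-order factors by setting
$V(s',x,\theta) := \e^{-\eta/(1+\eta)}\RR^\eta(s',x) + \e^{(2-\eta)/(1+\eta)}\A_u(s',x,\u(s'))\cdot\u(s')$,
so that $T^{[\eta,\theta]}(s,x,\u) = \int_0^s U^\eta(s',s)\bigl(V(s')\,\d_\theta\u(s')\bigr)\,ds'$. By Corollary~\ref{coro.norms} one has $|||\A_u|||\lesssim 1$ on $B_\EE(0,1)$, and the algebra inequality \eqref{esti.algebra} then gives the norm bound $|||V||| \lesssim \e^{-\eta/(1+\eta)}|||\RR^\eta||| + \e^{(2-\eta)/(1+\eta)}|||\u|||$ uniformly in $\u\in B_\EE(0,1)$. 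Next I would compute the $n$-th Fourier coefficient
$(V\,\d_\theta\u)_n = \sum_{m+k=n}V_m\,(ik)\,\u_k$
and repeat the convolution estimate used in the proof of the algebra property, but carrying the extra factor $|k|$ through the sum. The key observation is that $|k|\leq\langle n\rangle + \langle m\rangle$, so after using $V\in\EE_{s'}$ to absorb the $\langle m\rangle$-weights against $(m^2+1)^{-1}$ and the regularizing property of $\Phi$, the remaining Fourier-side loss at level $n$ is exactly $\langle n\rangle$, independent of the splitting $m+k=n$.

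Plugging this majoration into the precise propagator estimate \eqref{precise.bound.action.U} of Lemma~\ref{lemma.action.U} yields, for every $n\in\Z$ and every $0\le s'\le s<\und s(\eta)$,
$U^\eta_n(s',s)(V(s')\,\d_\theta\u(s'))_n \prec_{s',s} C(U^\eta)\,\langle n\rangle\,C_n^\eta(s',s)\,|||V|||\,|||\u|||\,\frac{c_1}{n^2+1}\,e^{(M'-\int_0^s\g_\eta)\langle n\rangle}\,\Phi\bigl(RX+\e^{1/(1+\eta)}\rho s\bigr)$,
which already has the correct decay profile of $\EE_s$, all the $s'$-dependence being concentrated in the prefactor $\langle n\rangle\,C_n^\eta(s',s)$. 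The regularization is then captured by the elementary bound
$\int_0^s \langle n\rangle\,e^{-\beta(s-s')\langle n\rangle}\,ds' \le \beta^{-1}\bigl(1-e^{-\beta s\langle n\rangle}\bigr)\le \beta^{-1}$,
which is uniform in $n$ and absorbs the $\d_\theta$-loss. Integrating the pointwise estimate in $s'$ and taking the supremum over $s\in[0,\und s(\eta))$ yields \eqref{esti.reg_dtheta}.

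The main obstacle is the very first step: because $\d_\theta\u\notin\EE$, Proposition~\ref{prop.properties} cannot be applied verbatim to $V\cdot\d_\theta\u$, and one has to be careful that in the Fourier convolution the derivative loss at level $n$ is $\langle n\rangle$ \emph{and not} $\langle m\rangle + \langle k\rangle$, since otherwise the factor would not be absorbed by the exponential decay of $C_n^\eta(s',s)$. Once this bookkeeping is in place, the $\beta^{-1}$ gain follows as in the proof of the analogous regularization result in \cite{morisse2016I}, and the announced inequality comes out by multiplying the three factors $C(U^\eta)$, $\beta^{-1}$ and $|||V|||\cdot|||\u|||$.
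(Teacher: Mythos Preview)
Your proposal is correct and follows essentially the same route as the paper, which simply refers to Proposition~5.4 in \cite{morisse2016I} and points to Lemma~\ref{lemma.action.U} together with the prefactor $C_n^\eta$ of \eqref{defi.Cn} as the two ingredients. Your write-up makes explicit what that reference contains: the convolution bookkeeping that turns the $\d_\theta$-loss into a single factor $\langle n\rangle$, and the time integral $\int_0^s \langle n\rangle\,e^{-\beta(s-s')\langle n\rangle}\,ds'\le\beta^{-1}$ that absorbs it.
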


The proof in the same as Proposition 5.4 in \cite{morisse2016I}, as it is based on Lemma \ref{lemma.action.U} and the expression of prefactor $C_n^{\eta}$ defined in \eqref{defi.Cn} which is the same as prefactor (4.31) in Lemma 4.15 in \cite{morisse2016I}.

As remainder terms $\RR^{\eta}$ have different norms in spaces $\EE$, given by Lemmas \ref{lemma.remainder.smooth} and \ref{lemma.remainder.airy} , we give more precisely the following two results:

\begin{coro}[Smooth case]
	\label{reg_dtheta.smooth}
	In the smooth case, thanks to Lemma {\rm \ref{lemma.remainder.smooth}}, there holds
	\be
		|||T^{[{\rm S},\theta]}(\u)||| \lesssim \beta^{-1} \left(\e^{1/2} \und{s}^2 + \und{s} R^{-1} + \e^{1/2} |||\u||| \right) |||\u|||.
	\ee
\end{coro}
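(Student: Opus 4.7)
The plan is a direct specialization of Proposition \ref{reg_dtheta} to the smooth case, combined with Lemma \ref{lemma.remainder.smooth}. In the smooth case the ansatz parameter is $\eta = 1$, so the exponents appearing in the general estimate simplify as
$$
\e^{-\eta/(1+\eta)} = \e^{-1/2}, \qquad \e^{(2-\eta)/(1+\eta)} = \e^{1/2},
$$
and by \eqref{def.CU.smooth} the propagator constant is $C(U^{{\rm S}}) = 1$. Substituting these into the general bound \eqref{esti.reg_dtheta} yields, for every $\u \in B_{\EE(1,R,\rho)}(0,1)$,
$$
|||T^{[{\rm S},\theta]}(\u)||| \lesssim \beta^{-1}\left(\e^{-1/2}|||\RR^{{\rm S}}||| + \e^{1/2}|||\u|||\right)|||\u|||.
$$

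The only remaining ingredient is to bound the remainder contribution $\e^{-1/2}|||\RR^{{\rm S}}|||$. This is exactly the content of Lemma \ref{lemma.remainder.smooth}, which states
$$
|||\e^{-1/2}\RR^{{\rm S}}||| \lesssim \e^{1/2}\und{s}^2 + \und{s} R^{-1}.
$$
Inserting this bound into the previous inequality produces the announced estimate
$$
|||T^{[{\rm S},\theta]}(\u)||| \lesssim \beta^{-1}\left(\e^{1/2}\und{s}^2 + \und{s} R^{-1} + \e^{1/2}|||\u|||\right)|||\u|||,
$$
which closes the argument.

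There is no substantive obstacle here: the corollary is purely a bookkeeping of constants and exponents coming from the choice $\eta = 1$ in Proposition \ref{reg_dtheta} together with the already-proved remainder bound. One should simply double-check that the product $\e^{-1/2}\RR^{{\rm S}}$ is indeed the combination that appears inside the parentheses of \eqref{esti.reg_dtheta} (it is, since $C(U^{{\rm S}}) = 1$), and that no additional powers of $C(U^{{\rm S}})$ need to be carried along — contrary to the Airy case where the factor $\e^{-1/3}$ survives.
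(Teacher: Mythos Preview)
Your proof is correct and matches the paper's approach exactly: the corollary has no separate proof in the paper, being presented as an immediate specialization of Proposition \ref{reg_dtheta} with $\eta = 1$, $C(U^{{\rm S}}) = 1$, together with the remainder bound of Lemma \ref{lemma.remainder.smooth}.
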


\begin{coro}[Airy case]
	\label{reg_dtheta.airy}
	In the smooth case, thanks to Lemma {\rm \ref{lemma.remainder.airy}}, there holds
	\be
		|||T^{[{\rm Ai},\theta]}(\u)||| \lesssim \e^{-1/3}\beta^{-1}\left(\e \und{s}^2 + \e^{1/3}\und{s} R^{-1} + \e^{-1/3} t_{\star}(R^{-1}) + \e ||| \u ||| \right) |||\u||| .
	\ee
\end{coro}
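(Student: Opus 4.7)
The plan is to specialize Proposition \ref{reg_dtheta} to the Airy case by setting $\eta = 1/2$ and then substituting the relevant bounds proved earlier. This is essentially a computation: no new ideas are needed beyond the general regularization estimate \eqref{esti.reg_dtheta} and the norm bound on the Airy remainder from Lemma \ref{lemma.remainder.airy}.

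First, I would compute the exponents of $\e$ that appear in the general estimate when $\eta = 1/2$. We have $\eta/(1+\eta) = (1/2)/(3/2) = 1/3$, so the prefactor in front of $|||\RR^{\eta}|||$ is $\e^{-1/3}$. Similarly, $(2-\eta)/(1+\eta) = (3/2)/(3/2) = 1$, so the prefactor in front of $|||\u|||$ is $\e$. With these two computations, Proposition \ref{reg_dtheta} gives
\begin{equation*}
|||T^{[{\rm Ai},\theta]}(\u)||| \lesssim C(U^{{\rm Ai}})\,\beta^{-1}\Bigl(\e^{-1/3}|||\RR^{{\rm Ai}}||| + \e\,|||\u||| \Bigr)\,|||\u|||.
\end{equation*}

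Next, I would substitute $C(U^{{\rm Ai}}) = \e^{-1/3}$ from \eqref{def.CU.Airy}, which produces the outer factor $\e^{-1/3}\beta^{-1}$ appearing in the statement. Then I would apply Lemma \ref{lemma.remainder.airy}, which gives
\begin{equation*}
\e^{-1/3}|||\RR^{{\rm Ai}}||| = ||| \e^{-1/3} \RR^{{\rm Ai}} ||| \lesssim \e \und{s}^2 + \e^{1/3}\und{s} R^{-1} + \e^{-1/3} t_{\star}(R^{-1}),
\end{equation*}
and bound $|||\u||| \leq 1$ in the remaining $\e\,|||\u|||$ term to account for the extra $+\e$ inside the parentheses. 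Combining these yields the claimed estimate.

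There is no genuine obstacle here: the corollary is purely the bookkeeping of exponents, with the extra $\e^{-1/3}$ from the propagator growth (see Remark \ref{remark.2}) being the only point worth emphasizing. This extra factor is what ultimately forces the more restrictive Gevrey index $\sigma_0 = 2/13$ in Theorem \ref{theorem.airy}, as explained in Remark \ref{remark.un}; in particular, the term $\e^{-1/3}\cdot\e^{-1/3}t_{\star}(R^{-1}) = \e^{-2/3}t_{\star}(R^{-1})$ will be the one requiring the degeneracy assumption $t_{\star}(x) = O(x^4)$ and the choice $R^{-1}$ small enough (in terms of $\e$) to keep the contraction estimate valid on the time interval $[0,\und{s})$.
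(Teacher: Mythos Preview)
Your proposal is correct and follows exactly the approach the paper intends: the corollary is stated without proof in the paper, merely citing Lemma \ref{lemma.remainder.airy}, and your computation (specializing $\eta=1/2$ in Proposition \ref{reg_dtheta}, substituting $C(U^{{\rm Ai}})=\e^{-1/3}$ and the bound on $|||\e^{-1/3}\RR^{{\rm Ai}}|||$, then using $|||\u|||\leq 1$ from $\u\in B_{\EE}(0,1)$) is precisely the bookkeeping the authors leave implicit.
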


About the regularization of derivation operators $\d_{x_j}$, the proof relies again on the simple computation given in Section 1.2.2 of \cite{morisse2016I}. The difference in the time renormalization in $\Phi(Rx + \e^{1/(1+\eta)} \rho s)$, instead of $\Phi(RX + \e\rho s)$ in \cite{morisse2016I}, is a minor one for the proof.

\begin{prop}[Regularization of $\d_{x_j}$]
	\label{reg_dx}
	For operator $T^{[\eta,x]}$ defined by \eqref{def.T_x.eta} and any $\u\in B_{\EE(\eta,R,\rho)}(0,1)$, there holds
	\be
		\label{esti.reg_dx}
		|||T^{[\eta,x]}(\u)||| \lesssim C(U^{\eta})\,R\rho^{-1}\,|||\u||| .
	\ee
\end{prop}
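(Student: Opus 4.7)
The plan is to estimate $T^{[\eta,x]}(\u)$ by reducing to a majoring-series computation in which the prefactor $\e^{1/(1+\eta)}$ of \eqref{def.T_x.eta} combines with a spatial derivative of $\Phi(RX+\e^{1/(1+\eta)}\rho s')$ to produce an $s'$-derivative of $\Phi$, after which integration in $s'$ yields the telescoping $[\Phi]_0^s\prec\Phi(\cdot,s)$. This is precisely the scheme of the elementary calculation in Section 1.2.2 of \cite{morisse2016I}, transposed to the present time scaling $t=\e^{1/(1+\eta)}s$.

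Concretely, I would proceed in four steps. First, Corollary \ref{coro.norms} gives $|||\A(\cdot,\cdot,\u)|||\lesssim 1$ for $\u\in B_{\EE(\eta,R,\rho)}(0,1)$. Second, combining the Fourier-convolution bookkeeping of Proposition \ref{prop.properties} with the representation \eqref{esti.u} of $\u\in\EE$, and using the coefficient-wise consequence $(\Phi^2)'\prec \Phi'$ of the defining property $\Phi^2\prec\Phi$ to absorb the product $\Phi\cdot R\Phi'$ arising from the convolution, I would obtain a pointwise majoration of the form
\begin{equation*}
\bigl(\A(s',x,\u(s'))\cdot\d_{x_j}\u(s')\bigr)_n \prec |||\u|||\,\frac{c_1}{n^2+1}\,e^{(M'-\int_0^{s'}\gamma_\eta(\tau)d\tau)\langle n\rangle}\,R\,\Phi'(RX+\e^{1/(1+\eta)}\rho s').
\end{equation*}
Third, the key identity $\e^{1/(1+\eta)}\,R\,\Phi'(RX+\e^{1/(1+\eta)}\rho s') = R\rho^{-1}\,\d_{s'}\Phi(RX+\e^{1/(1+\eta)}\rho s')$ converts the gain from $\d_{x_j}$ into an $s'$-derivative at the cost of the factor $R\rho^{-1}$. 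Fourth, applying $U^{\eta}(s',s)$ via Lemma \ref{lemma.action.U} contributes the prefactor $C(U^{\eta})\,C_n^{\eta}(s',s)\leq C(U^{\eta})$ and transfers the exponential weight from time $s'$ to time $s$; integrating in $s'\in[0,s]$ then telescopes
\begin{equation*}
\int_0^s \d_{s'}\Phi(RX+\e^{1/(1+\eta)}\rho s')\,ds' = \Phi(RX+\e^{1/(1+\eta)}\rho s) - \Phi(RX) \prec \Phi(RX+\e^{1/(1+\eta)}\rho s),
\end{equation*}
the last inequality relying on the nonnegativity of the coefficients of $\Phi$. Collecting all factors will yield the announced bound $|||T^{[\eta,x]}(\u)|||\lesssim C(U^{\eta})\,R\rho^{-1}\,|||\u|||$.

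The main obstacle, I expect, lies in the second step: the product of two templates $\Phi$ (for $\A$) and $R\Phi'$ (for $\d_{x_j}\u$) has to be controlled by $R\Phi'$ up to a universal constant, which is not the direct algebra estimate \eqref{esti.algebra.s} on $\EE_{s'}$ but rather its differentiated form $(\Phi^2)'\prec\Phi'$. This is immediate coefficient-wise once one recognizes that the majoration relation in the one-variable series is preserved by $d/dz$ for series with nonnegative coefficients, but it must be combined with the standard Fourier convolution handling of the exponential weight $e^{\cdots\langle n\rangle}$ and of the prefactor $c_1/(n^2+1)$, exactly as in the proof of Lemma \ref{lemma.action.U}.
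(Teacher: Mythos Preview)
Your proposal is correct and follows essentially the same approach as the paper, which omits the proof and simply refers to the elementary computation of Section 1.2.2 in \cite{morisse2016I}, noting only that the time renormalization $\Phi(RX+\e^{1/(1+\eta)}\rho s)$ replaces $\Phi(RX+\e\rho s)$. You have correctly identified and carried out this adaptation, including the key identity $\e^{1/(1+\eta)}R\Phi' = R\rho^{-1}\d_{s'}\Phi$ and the differentiated product estimate $(\Phi^2)'\prec\Phi'$ needed to absorb the analytic coefficient $\A$.
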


As $\EE$ is an algebra the operator $T^{[\eta,\u]}$ is directly bounded, with no need of a regularization by time result, on the contrary of operators $T^{[\eta,\theta]}$ and $T^{[\eta,x]}$. The following proposition gives us precisely

\begin{prop}[Nonlinear term]
	\label{reg_u}
	For the operator $T^{[\eta,\u]}$ defined by \eqref{def.T_u.eta}, for any $\u\in B_{\EE}(0,1)$ and $\beta >0$ there holds
	\be
		\label{esti.reg_u}
		|||T^{[\eta,\u]}(\u)||| \lesssim C(U^{\eta})\,\beta^{-1} \e^{1/(1+\eta)} |||\u|||^2 .
	\ee
\end{prop}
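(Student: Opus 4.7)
The plan is to combine the propagator action estimate of Lemma \ref{lemma.action.U}, the Banach-algebra property of $\EE$ from Proposition \ref{prop.properties}, and a refinement of the holomorphic action Lemma \ref{lemma.holomorphic} exploiting Assumption \ref{hypo.structural}. First, I would apply Lemma \ref{lemma.action.U} pointwise in $s'$ inside the integral defining $T^{[\eta,\u]}$: for each Fourier mode $n\in\Z$ and each $s'\in[0,s]$, the action of $U^{\eta}(s',s)$ on the integrand produces a majorant
$$
C(U^{\eta})\,C_n^{\eta}(s',s)\,\e^{1/(1+\eta)}\,\|\F(s',\cdot,\u(s'))\,\u(s')\|_{s'}
$$
against the reference majorant defining $\|\cdot\|_s$. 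Integrating the prefactor $C_n^{\eta}(s',s) = \exp(-\beta(s-s')\langle n\rangle)$ in $s'$ from $0$ to $s$ yields the regularizing factor $\int_0^s C_n^{\eta}(s',s)\,ds' \leq (\beta\langle n\rangle)^{-1} \leq \beta^{-1}$ uniformly in $n$, producing the $\beta^{-1}$ in the announced estimate.

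Next I would bound $\|\F(s',\cdot,\u(s'))\,\u(s')\|_{s'}$ by the algebra inequality $\|\F\,\u\|_{s'} \leq \|\F\|_{s'}\,\|\u\|_{s'}$. The delicate point, which alone provides the quadratic dependence $|||\u|||^2$, is that under Assumption \ref{hypo.structural} the matrix $F(t,x,u)$ obtained after the reduction of Section 3.1 of \cite{morisse2016I} satisfies $F(t,x,0)\equiv 0$: indeed the quadratic vanishing of $f$ at $u=\vec{u}_0=0$ forces the factoring $F(t,x,u) = G(t,x,u)\,u$ with $G$ analytic near the origin. Under the rescaling \eqref{def.mathbf.H} this reads $\F(s,x,\u) = \e^{2/(1+\eta)}\,{\mathbf G}(s,x,\u)\,\u$, and invoking Lemma \ref{lemma.holomorphic} on $G$ gives $|||{\mathbf G}(\cdot,\cdot,\u)||| \lesssim 1$ on $B_{\EE}(0,1)$. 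Combined with the algebra property this yields $|||\F(\cdot,\cdot,\u)||| \lesssim \e^{2/(1+\eta)}\,|||\u|||$, whence $|||\F\,\u||| \lesssim \e^{2/(1+\eta)}\,|||\u|||^2$.

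Assembling the two steps gives
$$
|||T^{[\eta,\u]}(\u)||| \lesssim C(U^{\eta})\,\beta^{-1}\,\e^{1/(1+\eta)}\,|||\F\,\u||| \lesssim C(U^{\eta})\,\beta^{-1}\,\e^{3/(1+\eta)}\,|||\u|||^2,
$$
and the claimed estimate \eqref{esti.reg_u} follows since $\e<1$. The main obstacle is precisely the extraction of the quadratic factor $|||\u|||^2$: a direct application of Corollary \ref{coro.norms} only gives $|||\F||| \lesssim 1$, which would yield the purely linear bound $|||T^{[\eta,\u]}||| \lesssim C(U^{\eta})\,\beta^{-1}\,\e^{1/(1+\eta)}\,|||\u|||$, insufficient to close the contraction argument of Section \ref{section.existence}. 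Assumption \ref{hypo.structural} must therefore be used genuinely, through the factoring $F = G\cdot u$, before the holomorphic action lemma is invoked on $G$.
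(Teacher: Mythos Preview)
Your proof is correct and follows the natural route implied by the paper, which does not spell out a proof but points to the analogous argument in \cite{morisse2016I}. The combination of the precise propagator estimate \eqref{precise.bound.action.U}, the integration $\int_0^s C_n^{\eta}(s',s)\,ds' \leq (\beta\langle n\rangle)^{-1}$, the algebra property, and the factoring $F(t,x,u)=G(t,x,u)\,u$ afforded by Assumption~\ref{hypo.structural} is exactly what is needed, and your extra gain $\e^{2/(1+\eta)}$ is harmless.

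One small point: your closing claim that the linear bound $C(U^{\eta})\beta^{-1}\e^{1/(1+\eta)}|||\u|||$ would be \emph{insufficient} for the contraction in Section~\ref{section.existence} is not quite accurate. With the choices $\beta=\e^{\delta/2}$ (smooth) or $\beta=\e^{2\delta/3}$ (Airy) made in Propositions~\ref{prop.below.smooth}--\ref{prop.below.airy}, the quantity $C(U^{\eta})\beta^{-1}\e^{1/(1+\eta)}$ is already $o(1)$ for the relevant range of $\delta$, so a merely linear bound would close the fixed point as well. The quadratic factor $|||\u|||^2$ is nonetheless what the proposition asserts, and it is what feeds into the form of the contraction constant in Proposition~\ref{prop.estimates} (where the coefficient involves $|||\u|||$ rather than a bare constant), so your use of Assumption~\ref{hypo.structural} is the right way to match the stated estimate.
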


\subsubsection{Contraction estimates}

Thanks to the results of the previous Section, we prove estimates for operator $T^{\eta}$ defined in \eqref{defi.T}, as in Section 5.5 of \cite{morisse2016I}. We omit once again the proof of this result, as it is the same as Proposition 5.9 in \cite{morisse2016I}.

\begin{prop}[Contraction estimates in $\EE$]
	\label{prop.estimates}
	There are $R_0$, $\rho_0>0$ such that for all $\beta >0$, $R\geq R_0$, $\rho>\rho_0$ and $\e \in (0,1)$, we get the following estimates for all $\u$ and $\v$ in $B_{\EE}(0,1)$:
	\begin{align}
		\label{esti.T}
		& |||T^{\eta}(\u)||| \lesssim C(U^{\eta})\,\left(\beta^{-1} \left(\e^{-\eta/(1+\eta)} |||\RR^{\eta}||| + \e^{1/(1+\eta)} |||\u||| \right) + R\rho^{-1}\right) |||\u||| , \\
		\label{esti.TT}
		& |||T^{\eta}(\u) - T^{\eta}(\v)||| \lesssim C(U^{\eta})\,\left(\beta^{-1} \left(\e^{-\eta/(1+\eta)} |||\RR^{\eta}||| + \e^{1/(1+\eta)} |||\u||| \right) + R\rho^{-1}\right)|||\u - \v|||
	\end{align}
\end{prop}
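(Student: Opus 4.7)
The plan is to exploit the decomposition \eqref{defi.T.decoupage} which splits $T^{\eta}$ into the three pieces $T^{[\eta,\theta]}$, $T^{[\eta,x]}$ and $T^{[\eta,\u]}$, and to apply the three regularization results Propositions \ref{reg_dtheta}, \ref{reg_dx} and \ref{reg_u} separately to each piece. All the analytic coefficients $\RR^{\eta}$, $\A_u$, $\A$ and $\F$ that appear in \eqref{def.T_theta.eta}--\eqref{def.T_u.eta} are controlled in $\EE$-norm, uniformly on the unit ball, by Corollary \ref{coro.norms}, provided $R \geq R_0$ and $\rho \geq \rho_0$.

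For the bound \eqref{esti.T}, I would simply sum the three estimates \eqref{esti.reg_dtheta}, \eqref{esti.reg_dx} and \eqref{esti.reg_u}. Since $\u \in B_{\EE}(0,1)$ and $(2-\eta)/(1+\eta) \geq 1/(1+\eta)$ for $\eta \in (0,1]$, the term $\e^{(2-\eta)/(1+\eta)} |||\u|||^2$ arising in \eqref{esti.reg_dtheta} and the term $\e^{1/(1+\eta)} |||\u|||^2$ arising in \eqref{esti.reg_u} are both controlled by $\e^{1/(1+\eta)} |||\u|||$. Collecting everything yields the claimed factor $C(U^{\eta})\bigl(\beta^{-1}(\e^{-\eta/(1+\eta)}|||\RR^{\eta}||| + \e^{1/(1+\eta)}|||\u|||) + R\rho^{-1}\bigr)|||\u|||$.

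For the contraction estimate \eqref{esti.TT}, I would split each difference $T^{[\eta,*]}(\u) - T^{[\eta,*]}(\v)$, with $* \in \{\theta,x,\u\}$, via standard telescoping identities. For $T^{[\eta,\theta]}$, the trilinear expression splits as $\e^{-\eta/(1+\eta)}\RR^{\eta}\d_\theta(\u-\v) + \e^{(2-\eta)/(1+\eta)}\A_u\bigl(\u\,\d_\theta(\u-\v) + (\u-\v)\d_\theta\v\bigr)$; the $\d_\theta$ derivative in each summand is absorbed thanks to the precise estimate \eqref{precise.bound.action.U}, whose prefactor $C_n^\eta(s',s) = \exp(\beta(s-s')|n|)$ produces, after integration in time against the $|n|$ brought down by $\d_\theta$, a $\beta^{-1}$ factor independently of which of the arguments carries the derivative. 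For $T^{[\eta,x]}$ and $T^{[\eta,\u]}$, I would use the Lipschitz identities $\A(\u)\d_x\u - \A(\v)\d_x\v = \A(\u)\d_x(\u-\v) + (\A(\u)-\A(\v))\d_x\v$ and $\F(\u)\u - \F(\v)\v = \F(\u)(\u-\v) + (\F(\u)-\F(\v))\v$; in the second term of each, the Taylor formula $\A(\u) - \A(\v) = \widetilde{\A}(\u,\v)\cdot(\u-\v)$ (and similarly for $\F$), combined with Lemma \ref{lemma.holomorphic} applied to the analytic factor $\widetilde{\A}$, extracts $\u-\v$ as a bounded $\EE$-factor, after which the residual $\d_x\v$ is regularized by Proposition \ref{reg_dx}.

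The main technical obstacle is that the derivations $\d_\theta$ and $\d_x$ are unbounded on $\EE$, so any residual derivative appearing in the telescoped differences must be removed by a regularization-in-time mechanism. The crucial fact is that the Taylor reduction turns every difference $\A(\u)-\A(\v)$ or $\F(\u)-\F(\v)$ into a bilinear expression in which $\u-\v$ appears as a bounded factor with no derivative acting on it; the residual derivatives then fall only on $\u$, $\v$ or $\u-\v$ and can be handled exactly as in the proof of \eqref{esti.T}. Every contribution thus fits into the same structural bound as \eqref{esti.T}, with $|||\u|||$ replaced by $|||\u-\v|||$, up to an overall factor $|||\u|||+|||\v||| \leq 2$ absorbed into the implicit constants.
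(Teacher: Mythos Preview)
Your proposal is correct and follows essentially the same approach as the paper: the paper omits the proof entirely and refers to Proposition 5.9 in \cite{morisse2016I}, and your argument---summing the three regularization estimates \eqref{esti.reg_dtheta}, \eqref{esti.reg_dx}, \eqref{esti.reg_u} for \eqref{esti.T}, and telescoping plus Taylor-expanding the analytic coefficients for \eqref{esti.TT}---is precisely the structure of that referenced proof. Your observation that $\e^{(2-\eta)/(1+\eta)} \leq \e^{1/(1+\eta)}$ on the unit ball is the small bookkeeping detail needed to match the stated form of the bound.
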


For convenience we introduce
\be
	\label{def.K.epsilon}
	K^{\eta}(\e) = C(U^{\eta})\,\left(\beta^{-1} \left(\e^{-\eta/(1+\eta)} |||\RR^{\eta}||| + \e^{1/(1+\eta)} |||\f^{\eta}||| \right) + R\rho^{-1}\right) .
\ee


\section{Estimates from below and Hadamard instability}
\label{section.existence}


\subsection{Existence of solutions}


Thanks to the Proposition \ref{prop.estimates}, we can now solve the fixed point equation \eqref{fixed.point.equation} in the ball $B_{\EE(\eta)}\left(0,|||\f^{\eta}|||\right)$:

\begin{coro}[Contraction and fixed point in $\EE$]
	\label{coro.fixedpoint} 
	Let $\eta >0$ be fixed. Let $R(\e) > R_0$, $\rho(\e) > \rho_0$, $\beta(\e) >0$ and $\und{s}(\eta)$ be such that 
	\be
		\label{hypo.coro.eta}
		\lim_{\e\to 0} C(U^{\eta})\,\left(\beta^{-1}\left(\e^{-\eta/(1+\eta)} |||\RR^{\eta}||| + \e^{1/(1+\eta)} |||\f^{\eta}||| \right) + R\rho^{-1}\right) = 0
	\ee
	
	\noindent Let assume that the propagator $U^{\eta}$ satisfy the growth \eqref{growth.U.eta}. Then for $\e$ small enough, the fixed point equation \eqref{fixed.point.equation}, with $\f^{\eta}$ defined by \eqref{free.solution.eta}, has a unique solution $\u$ in $B_{\EE(\eta,R,\rho,\beta)}\left(0,2|||\f^{\eta}|||\right)$. This solution satisfies
	\be
		\label{esti.solution.eta}
		|||\u - \f^{\eta}||| \lesssim K^{\eta}(\e) |||\f^{\eta}||| 	
	\ee
	
	\noindent with $K^{\eta}$ defined in \eqref{def.K.epsilon}.
\end{coro}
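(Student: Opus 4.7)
The plan is to apply the Banach contraction mapping theorem to the map $\Psi(\u) := \f^{\eta} + T^{\eta}(\u)$ on the closed ball $B_{\EE}(0,2|||\f^{\eta}|||)$. By Proposition \ref{prop.properties}, $\EE$ is a Banach space, so this ball is complete. Before invoking the estimates of Proposition \ref{prop.estimates}, I would first check that for $\e$ sufficiently small one has $B_{\EE}(0,2|||\f^{\eta}|||)\subset B_{\EE}(0,1)$, so that all the analytic-function bounds (Corollary \ref{coro.norms}) and propagator bounds apply. This is immediate from Lemma \ref{lemma.norm.free.solution}, which gives $|||\f^{\eta}||| \lesssim e^{M'-M(\e)}$, and the choice $M(\e)=\e^{-\delta}$ of \eqref{size.gevrey}, which forces $|||\f^{\eta}|||\to 0$ as $\e\to 0$.

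The stability step consists in bounding $|||\Psi(\u)|||$ for $\u \in B_{\EE}(0,2|||\f^{\eta}|||)$. Using estimate \eqref{esti.T} together with $|||\u|||\leq 2|||\f^{\eta}|||$, we substitute $|||\u|||$ by $|||\f^{\eta}|||$ inside the prefactor of \eqref{esti.T}, producing exactly the quantity $K^{\eta}(\e)$ of \eqref{def.K.epsilon} up to a multiplicative constant:
\[
   |||T^{\eta}(\u)|||\,\lesssim\,K^{\eta}(\e)\,|||\f^{\eta}|||.
\]
By hypothesis \eqref{hypo.coro.eta}, $K^{\eta}(\e)\to 0$, hence for $\e$ small enough $|||\Psi(\u)||| \leq |||\f^{\eta}||| + |||T^{\eta}(\u)||| \leq 2|||\f^{\eta}|||$, so $\Psi$ stabilizes the ball.

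Contraction is the analogous use of \eqref{esti.TT}: for $\u,\v$ in the ball,
\[
   |||\Psi(\u)-\Psi(\v)||| = |||T^{\eta}(\u)-T^{\eta}(\v)||| \,\lesssim\, K^{\eta}(\e)\,|||\u - \v|||,
\]
which is a strict contraction for $\e$ small by \eqref{hypo.coro.eta}. The Banach fixed point theorem then yields a unique fixed point $\u\in B_{\EE}(0,2|||\f^{\eta}|||)$ of \eqref{fixed.point.equation}, and the a posteriori estimate \eqref{esti.solution.eta} follows at once from $\u - \f^{\eta} = T^{\eta}(\u)$ combined with the stability bound above.

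At this stage there is no genuine obstacle: all the delicate work has already been packaged into Proposition \ref{prop.estimates}, in which the singular prefactors $C(U^{\eta})$ and $\e^{-\eta/(1+\eta)}$, the smallness of the remainder $|||\RR^{\eta}|||$ from Lemmas \ref{lemma.remainder.smooth}--\ref{lemma.remainder.airy}, and the $\beta^{-1}$ regularization gain from integrating derivatives in time were balanced against one another. The only subtle point to keep track of is that \eqref{esti.T} is \emph{nonlinear} (the prefactor itself involves $|||\u|||$), so one must use the ball of radius $2|||\f^{\eta}|||$ rather than radius $1$ to reproduce precisely the constant $K^{\eta}(\e)$ that appears in the hypothesis.
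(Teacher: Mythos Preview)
Your proof is correct and follows exactly the approach the paper has in mind: the paper's own proof is a one-line remark that the result is ``straightforward using the estimates of Proposition~\ref{prop.estimates}, under the condition of smallness for $K^{\eta}(\e)$ given by \eqref{hypo.coro.eta}'', and you have spelled out precisely that Banach fixed-point argument, including the useful observation that $B_{\EE}(0,2|||\f^{\eta}|||)\subset B_{\EE}(0,1)$ is needed for Proposition~\ref{prop.estimates} to apply.
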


The proof of the Corollary is straigthforward using the estimates of Proposition \ref{prop.estimates}, under the condition of smallness for $K^{\eta}(\e)$ given by \eqref{hypo.coro.eta}. This Corollary is in some sense an abstract result, as it deals with abstract propagator $U^{\eta}$ with specific growth \eqref{growth.U.eta}, described at the beginning of Section \ref{subsection.action}. We emphazise that $U^{\eta}$, except for both smooth and Airy cases, have not be proved to exist. Corollary \ref{coro.fixedpoint} gives a result on fixed point equations \eqref{fixed.point.equation}, independently of the initial Cauchy equation.


\subsection{Bounds from below}


From now on we focus on both smooth and Airy cases, for which we have proved the existence and the growth of the propagators (see Lemmas \ref{lemma.growth.propa.smooth} and \ref{lemma.growth.propa.airy}) and the actual growth of the special free solution (see Lemmas \ref{lemma.growth.freesolution.smooth} and \ref{lemma.growth.freesolution.airy}). 

We follow here Section 6.2 of \cite{morisse2016I}. We aim to prove that, in the smooth case, the solutions have the same growth as $\f^{{\rm S}}$ given in Lemma \ref{lemma.growth.freesolution.smooth}, that is
\be
	\label{growth.solution.smooth}
	|\u(s,x,\theta)| \gtrsim e^{-M(\e)} \exp\left(\int_{0}^{s} \g_{{\rm S}}^{\flat}(\tau) d\tau\right) \quad , \quad \forall\,(s,x,\theta) \in \Omega_{R,\e^{1/(1+\eta)}\rho}\times\mathbb{T} 
\ee

\noindent with $\Omega_{R,\e^{1/(1+\eta)}\rho}$ defined by \eqref{defi.Omega}. In the Airy case, thanks to Lemma \ref{lemma.growth.freesolution.airy}, we aim to prove 
\be
	\label{growth.solution.airy}
	|\u(s,x,\theta)| \gtrsim s^{-1/4}e^{-M(\e)} \exp\left(\int_{0}^{s} \g_{{\rm Ai}}^{\flat}(\tau) d\tau\right) \quad , \quad \forall\,(s,x,\theta) \in \Omega_{R,\e^{2/3}\rho}\times\mathbb{T} .
\ee

As in \cite{morisse2016I}, by some computations we prove the pointwise inequality
\be
	\label{esti.local}
	|(\u-\f^{\eta})(s,x,\theta))| \lesssim K^{\eta}(\e)\, C(U^{{\rm \eta}}) e^{-M(\e)} \exp\left( \int_{0}^{s} \g_{\eta}(\tau) d\tau \right)
\ee

\noindent which is inequality (6.5) of \cite{morisse2016I}, holding for all $(s,x,\theta) \in \Omega_{R, \e^{1/(1+\eta)}\rho}\times\mathbb{T} $. Next, by definition \eqref{def.gamma.eta} of $\g_{\eta}$, inequality \eqref{esti.local} becomes
$$
	|(\u-\f^{\eta})(s,x,\theta))| \lesssim K^{\eta}(\e)\, C(U^{{\rm \eta}}) \exp\left( \und{s}\beta + \int_{0}^{s} \left( \g_{\eta}^{\sharp}(\tau) - \g_{\eta}^{\flat}(\tau) \right) d\tau \right) e^{-M(\e)} \exp\left( \int_{0}^{s} \g_{\eta}^{\flat}(\tau) d\tau \right).
$$

\noindent As $\g_{\eta}^{\flat} = \g_{\eta}^{\sharp}$ (see Remark \ref{remark.flat.sharp}), we finally get
\be
	\label{smallness.u-f}
	|(\u-\f^{\eta})(s,x,\theta))| \lesssim K(\e)\, C(U^{{\rm \eta}}) \,e^{ \und{s}\beta } \, e^{-M(\e)} \exp\left( \int_{0}^{s} \g_{\eta}^{\flat}(\tau) d\tau \right).
\ee

Then in order to get \eqref{growth.solution.smooth} or \eqref{growth.solution.airy} thanks to \eqref{smallness.u-f}, limit \eqref{hypo.coro.eta} is not sufficient as the term $e^{ \und{s}\beta }$ could be large as $\e$ tends to $0$, as explained in Section 6.2 in \cite{morisse2016I}. In both cases, we have then a stronger constraint on parameters $R$, $\rho$, $\beta$ and $M'$:

\begin{itemize}
	\item In the smooth case, thanks to Corollary \ref{reg_dtheta.smooth} and \eqref{growth.solution.smooth}, we need
\be
	\label{first.constraint.smooth}
	\lim_{\e \to 0} \,\left(\beta^{-1}\,\left(\e^{1/2}\und{s}^2 + \und{s}R^{-1} + \e^{1/2} e^{M' - M} \right) + R\rho^{-1}\right)e^{\und{s} \beta} = 0 .
\ee

	\item In the Airy case, thanks to Corollary \ref{reg_dtheta.airy} and \eqref{growth.solution.airy}, we need
\be
	\label{first.constraint.airy}
	\lim_{\e \to 0} \,\e^{-1/3} \und{s}^{1/4}\,\left(\beta^{-1} \, \left(\e\und{s}^2 + \e^{1/3}\und{s}R^{-1} + \e^{-1/3}t_{\star}(R^{-1}) + \e^{2/3}e^{M' - M} \right) + R\rho^{-1}\right)e^{\und{s} \beta } = 0 .
\ee

\end{itemize}

The second constraint on the parameters comes from the competition between the characteristic growth time $\und{s}_1(\eta)$ defined in \eqref{def.und.s.1} and the regularity time $(\e^{1/(1+\eta)}\rho)^{-1}$. To see the growth of the solution, hence the instability, we need it to exist on a sufficiently large time compared to the growth time, that is we need $\und{s} $ to be $\und{s}_1(\eta)$. 

As $M'$ is large in the limit $\e\to 0$, the implicit definition \eqref{def.und.s.1} of $\und{s}_1(\eta)$ and definition \eqref{def.gammasharp.smooth} of $\g_{{\rm S}}^{\sharp}$ and definition \eqref{def.gammasharp.airy} of $\g_{{\rm Ai}}^{\sharp}$ lead to the equivalent
\be
	\und{s}_1(\eta) \approx M'^{1/(1+\eta)}
\ee

\noindent for $\eta = 1$ and $\eta = 1/2$. Hence the following constraints:
\begin{itemize}

	\item In the smooth case,
\be
	\label{second.constraint.smooth}
	\lim_{\e\to0} M'^{1/2} \e^{1/2}\rho =0.
\ee

	\item In the Airy case,
\be
	\label{second.constraint.airy}
	\lim_{\e\to0} M'^{2/3} \e^{2/3}\rho =0 .
\ee

\end{itemize}


We focus now on both cases separately from now on, even if the way we find suitable $R$, $\rho$, $\beta$ and $M'$ which would satisfy both constraints is very similar in both cases. We sum up all of this in the two following Propositions.

\begin{prop}[Estimate from below: smooth case]
	\label{prop.below.smooth}
	With the limitation of the Gevrey index
	\be
		\label{gevrey.smooth}
		\sigma < \delta < 1/3 ,
	\ee
	\noindent both constraints \eqref{first.constraint.smooth} and \eqref{second.constraint.smooth} are satisfied. Then the fixed point equation \eqref{fixed.point.equation} has a unique solution $\u$ in $\EE$ and 
	\be 
		|\u(s,x,\theta)| \gtrsim e^{-M(\e)}e^{\g_0 \frac{1}{2}s^{2}} \quad , \quad \forall\,(s,x,\theta)\in\Omega_{R,\e^{1/2}\rho}\times\mathbb{T}.
	\ee
	\noindent There holds also 
	\be
		\label{final.und.s.smooth}
		\und{s} \approx \e^{-\delta/2} .
	\ee
\end{prop}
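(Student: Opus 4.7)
The plan is to exhibit explicit parameter functions $M'(\e), \beta(\e), R(\e), \rho(\e)$ in the regime $\sigma < \delta < 1/3$ for which both constraints \eqref{first.constraint.smooth} and \eqref{second.constraint.smooth} reduce to a single positive exponent, then apply Corollary \ref{coro.fixedpoint} to produce the fixed point $\u$, and finally extract the pointwise lower bound by combining the contraction estimate \eqref{smallness.u-f} with the sharp growth of $\f^{{\rm S}}$ from Lemma \ref{lemma.growth.freesolution.smooth}.

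First I would fix a small auxiliary parameter $\mu \in (0,(1-3\delta)/4)$ and set
\[
   M'(\e) = \tfrac{1}{2}M(\e) = \tfrac{1}{2}\e^{-\delta}, \qquad \beta = \e^{\delta/2}, \qquad R = \e^{-\delta-\mu}, \qquad \rho = \e^{-\delta-2\mu}.
\]
Since $\g_{{\rm S}}^{\sharp}(\tau) = \g_0 \tau$, the implicit relation \eqref{def.und.s.1} reads $\g_0\,\und{s}_1^2/2 + \beta\,\und{s}_1 = M'$; the linear correction $\beta\,\und{s}_1 = O(1)$ is negligible compared with $\g_0\und{s}_1^2/2 \sim \e^{-\delta}$, so $\und{s}_1 \approx \sqrt{2M'/\g_0} \approx \e^{-\delta/2}$. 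With our choice of $\rho$, $\und{s}_1 \leq (\e^{1/2}\rho)^{-1} = \e^{-1/2+\delta+2\mu}$ reduces to $3\delta/2 + 2\mu \leq 1/2$, which holds by the definition of $\mu$, so that $\und{s} = \und{s}_1 \approx \e^{-\delta/2}$, giving \eqref{final.und.s.smooth}.

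Next I would check the two constraints term by term: $\beta^{-1}\e^{1/2}\und{s}^2 = \e^{1/2-3\delta/2}$, $\beta^{-1}\und{s}R^{-1} = \e^{\mu}$, $\beta^{-1}\e^{1/2}e^{M'-M} \lesssim \e^{1/2-\delta/2}\exp(-\e^{-\delta}/2)$, $R\rho^{-1} = \e^{\mu}$, and $e^{\und{s}\beta} = e^{O(1)}$, while \eqref{second.constraint.smooth} becomes $M'^{1/2}\e^{1/2}\rho = \e^{1/2-3\delta/2-2\mu}$. Each exponent is strictly positive exactly when $\delta < 1/3$ (and $\mu$ is small enough), so both constraints hold. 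In particular the hypothesis \eqref{hypo.coro.eta} of Corollary \ref{coro.fixedpoint}, which is \eqref{first.constraint.smooth} without the factor $e^{\und{s}\beta}$, is satisfied, producing a unique fixed point $\u \in B_{\EE}(0,2|||\f^{{\rm S}}|||)$ with $|||\u-\f^{{\rm S}}||| \lesssim K^{{\rm S}}(\e)\,|||\f^{{\rm S}}|||$.

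For the pointwise lower bound I would plug $\g_{{\rm S}}^{\sharp} = \g_{{\rm S}}^{\flat} = \g_0\tau$, so $\int_0^s \g_{{\rm S}}^{\flat}(\tau)\,d\tau = \g_0 s^2/2$, into \eqref{smallness.u-f} to obtain $|(\u-\f^{{\rm S}})(s,x,\theta)| \lesssim K^{{\rm S}}(\e)\,e^{\und{s}\beta}\,e^{-M(\e)}e^{\g_0 s^2/2}$ on $\Omega_{R,\e^{1/2}\rho}(0)\times\mathbb{T}$, where by the parameter choice above $K^{{\rm S}}(\e)\,e^{\und{s}\beta} \to 0$. Combined with the lower bound $|\f^{{\rm S}}(s)| \approx e^{-M(\e)}e^{\g_0 s^2/2}$ of Lemma \ref{lemma.growth.freesolution.smooth}, the triangle inequality $|\u| \geq |\f^{{\rm S}}| - |\u-\f^{{\rm S}}|$ finishes the proof. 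The main obstacle is the scale matching between $\beta$ and $\und{s}$: too small $\beta$ breaks $\beta^{-1}\e^{1/2}\und{s}^2$, too large $\beta$ makes $e^{\und{s}\beta}$ explode, and the critical equilibrium $\beta\,\und{s} = O(1)$ produces exactly the threshold $\delta < 1/3$ appearing in \eqref{gevrey.smooth}.
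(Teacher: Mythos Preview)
Your proof is correct and follows essentially the same approach as the paper: the key choice $\beta = \e^{\delta/2}$ (forcing $\und{s}\beta = O(1)$) is identical, and the resulting constraint $\e^{1/2-3\delta/2}\to 0$ is what produces the threshold $\delta<1/3$ in both arguments. The only cosmetic differences are your specific choices $M'=M/2$, $R=\e^{-\delta-\mu}$, $\rho=\e^{-\delta-2\mu}$ in place of the paper's $M'=M-|\ln\e|$, $R^{-1}=\e^{1/6+\delta/2}$, $\rho^{-1}=\e^{1/3}$, and your slightly more explicit handling of the final triangle-inequality step; these are equivalent parameter families satisfying the same chain of asymptotic inequalities.
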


\begin{proof}
	As in \cite{morisse2016I}, we use notation $\ll$ defined in (1.32) to rewrite all the constraints in a more useful way. Constraints \eqref{first.constraint.smooth} and \eqref{second.constraint.smooth} are equivalent to 
	\begin{eqnarray}
		\beta^{-1}\e^{1/2}\e^{-\delta}\,e^{\beta \e^{-\delta/2}} & \ll & 1 \\
		\beta^{-1} \e^{-\delta/2} R^{-1}\,e^{\beta \e^{-\delta/2}} & \ll & 1 \\
		\beta^{-1}\e^{1/2} e^{M' - M}\,e^{\beta \e^{-\delta/2}} & \ll & 1 \\
		R\rho^{-1}\,e^{\beta \e^{-\delta/2}} & \ll & 1 \\
		\e^{-\delta/2} \e^{1/2}\rho & \ll & 1 .
	\end{eqnarray}
	
	\noindent This implies first, as in \cite{morisse2016I}, that $\beta$ as to be of size $\e^{\delta/2}$. We then posit
	$$
		\beta = \e^{\delta/2}.
	$$
	
	\noindent We have then 
	\begin{eqnarray}
		\e^{1/2}\e^{-3\delta/2} & \ll & 1 \label{un.smooth} \\
		\e^{-\delta} R^{-1} & \ll & 1 \label{deux.smooth} \\
		\e^{1/2 - \delta/2} e^{M' - M}& \ll & 1 \label{trois.smooth} \\
		R\rho^{-1}& \ll & 1 \label{quatre.smooth} \\
		\e^{1/2-\delta/2}\rho & \ll & 1 \label{cinq.smooth} .
	\end{eqnarray}
	
	\noindent Asymptotic inequality \eqref{un.smooth} is equivalent to the limitation $\delta <1/3$ on the Gevrey index. Next, as $\delta <1$, asymptotic inequality \eqref{trois.smooth} is satisfied as soon as $ M' = M - |\ln(\e)| $.
	
	Finally, inequalities \eqref{deux.smooth}, \eqref{quatre.smooth} and \eqref{cinq.smooth} are equivalent to 
	\be
		\e^{1/2-\delta/2} \ll \rho^{-1} \ll R^{-1} \ll \e^{\delta} .
	\ee
	
	\noindent This chain of asymptotic inequalities is satisfied as soon as $\e^{1/2 - \delta/2} \ll \e^{\delta}$, which is equivalent again to the limitation $\delta <1/3$ of the Gevrey index. Then the choice $R^{-1} = \e^{1/6 + \delta/2}$ and $\rho^{-1} = \e^{1/3}$ satisfies the constraints.
	
\end{proof}

\begin{remark}
	\label{remark.smooth}
	Note that in the case where ${\rm R}^{{\rm S}} \equiv 0$, both {\rm \eqref{un.smooth}} and {\rm \eqref{deux.smooth}} disappear hence the limitation $	\delta <1 $ in place of $\delta <1/3$. This has to be put in parallel of Remark {\rm 2.11} in {\rm \cite{morisse2016I} }, which describes a way to improve the result of Theorem {\rm 3} therein.
\end{remark}

\begin{prop}[Estimate from below: Airy case]
	\label{prop.below.airy}
	With the limitation of the Gevrey index
	\be
		\label{gevrey.airy}
		\sigma < \delta < 2/13
	\ee
	\noindent both constraints \eqref{first.constraint.airy} and \eqref{second.constraint.airy} are satisfied. Then the fixed point equation \eqref{fixed.point.equation} has a unique solution $\u$ in $\EE$ and 
	\be 
		|\u(s,x,\theta)| \gtrsim e^{-M(\e)}e^{\g_0 \frac{2}{3}s^{3/2}} \quad , \quad \forall\,(s,x,\theta)\in\Omega_{R,\e^{2/3}\rho}\times\mathbb{T}.
	\ee
	\noindent There holds also
	\be
		\label{final.und.s.airy}
		\und{s} \approx \e^{-2\delta/3} .
	\ee
\end{prop}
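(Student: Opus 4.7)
The plan is to follow closely the architecture of the proof of Proposition \ref{prop.below.smooth}, adapted to the Airy scaling $\eta = 1/2$. First I would rewrite both \eqref{first.constraint.airy} and \eqref{second.constraint.airy} as a list of asymptotic inequalities (using the $\ll$ notation of \cite{morisse2016I}), after substituting $\und{s} \approx \e^{-2\delta/3}$---which follows from $\und{s}_1(\eta) \approx M'^{1/(1+\eta)} = M'^{2/3}$ together with $M' \approx \e^{-\delta}$---and $t_\star(R^{-1}) \approx R^{-4}$ from the degenerate Assumption \ref{hypo.degenerate}. I then introduce the exponents $a, b > 0$ by writing $R^{-1} = \e^a$ and $\rho^{-1} = \e^b$.

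Next, as in the smooth case, the decisive step is the choice of $\beta$: the factor $e^{\und{s}\beta}$ must stay of order one, which forces $\beta\und{s} = O(1)$, hence $\beta \approx \und{s}^{-1} = \e^{2\delta/3}$ (this is also the largest admissible $\beta$, which minimizes the $\beta^{-1}$ appearing in four of the five terms of \eqref{first.constraint.airy}). The term $\e^{2/3}e^{M'-M}$ is absorbed by setting $M' = M - K|\ln\e|$ for a suitable constant $K>0$, exactly as in Proposition \ref{prop.below.smooth}. With $\beta$ and $M'$ thus fixed, the remaining conditions become elementary inequalities of the form $\e^{\phi_i(a,b,\delta)}\ll 1$: lower bounds on $a$ arise from the $\e^{1/3}\und{s}R^{-1}$ and $\e^{-1/3}R^{-4}$ terms, whereas an upper bound on $a$ is obtained by combining the $R\rho^{-1}$ term with the space-time domain constraint \eqref{second.constraint.airy} through the chain $a + 1/3 + \delta/6 < b < 2/3 - 2\delta/3$. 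Balancing these two-sided bounds yields the critical exponent $\sigma_0 = 2/13$ announced in the statement; it then suffices to pick explicit exponents $a$ and $b$ strictly inside the admissible window. Corollary \ref{coro.fixedpoint} produces the unique fixed point $\u \in B_\EE(0, 2|||\f^{\rm Ai}|||)$ together with $|||\u - \f^{\rm Ai}||| \lesssim K^{\rm Ai}(\e)|||\f^{\rm Ai}|||$, and the pointwise bound \eqref{smallness.u-f} combined with the lower bound of Lemma \ref{lemma.growth.freesolution.airy} gives the announced growth $|\u(s,x,\theta)|\gtrsim s^{-1/4}e^{-M(\e)}e^{\frac{2}{3}\g_0 s^{3/2}}$ on $\Omega_{R,\e^{2/3}\rho}(0)\times\mathbb{T}$; the harmless prefactor $s^{-1/4}$ is then absorbed into the exponential on $[0,\und{s}]$.

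The main obstacle, compared with the smooth case, is that two new quantities appear on the left-hand side of \eqref{first.constraint.airy}: the Airy propagator weight $C(U^{\rm Ai}) = \e^{-1/3}$ (cf.\ Remark \ref{remark.2}) and the remainder term $\e^{-1/3}t_\star(R^{-1})$. Both force $a$ to be large (so that $R^{-1}$ is small enough), while \eqref{second.constraint.airy} forces $a$ to be small, and matching them is delicate; this drives $\sigma_0$ well below the $1/2$ expected from a vanishing remainder (cf.\ Remark \ref{remark.un}). The degeneracy $t_\star(x) = O(x^4)$ is crucial here: in the non-degenerate case $t_\star(x) = O(x^2)$, the corresponding term $\e^{-1/3}R^{-2}$ would force $a > 1/3 + 5\delta/12$, which is incompatible with the upper bound $a < 1/3 - 5\delta/6$ for any $\delta > 0$. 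This is exactly the content of Remark \ref{remark.deux} and is why the non-degenerate Airy case is deferred to the companion papers \cite{morisse2016lemma} and \cite{morisse2016IIz}.
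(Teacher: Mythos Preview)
Your approach is essentially the paper's: set $\beta=\e^{2\delta/3}$ to tame $e^{\und s\beta}$, take $M'=M-K|\ln\e|$, and reduce \eqref{first.constraint.airy}--\eqref{second.constraint.airy} to a chain of power-of-$\e$ inequalities linking $R^{-1}$ and $\rho^{-1}$, whose compatibility gives the threshold on $\delta$. The only substantive difference is presentational: you parametrize by exponents $a,b$ and work directly with $k=4$, whereas the paper keeps $R^{-1},\rho^{-1}$ and carries a general order $k$ for $t_\star$, performing a case split on whether \eqref{deux.airy} or \eqref{trois.airy} is the binding upper bound on $R^{-1}$, before specializing to $k=4$ (and showing explicitly the incompatibility for $k=2$). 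Your discussion of the non-degenerate case matches that conclusion.

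One arithmetic point to watch: your chain $a+\tfrac13+\tfrac{\delta}{6}<b<\tfrac23-\tfrac{2\delta}{3}$ retains the $\delta/6$ contribution of $\und s^{1/4}$ in \eqref{cinq.airy}; combined with the lower bound $a>\tfrac{3\delta}{2}$ from \eqref{deux.airy}, this literally yields $\delta<1/7$, not $2/13$. The paper obtains $2/13$ by matching $\e^{2/3-2\delta/3}\ll\e^{1/3+3\delta/2}$ without the extra $\delta/6$. So either drop the $\und s^{1/4}$ factor when balancing (consistent with the paper's computation), or be explicit that your arithmetic gives a slightly smaller admissible range; in either case the structure of the argument is correct and the threshold is of the same order. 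Relatedly, your final remark that the prefactor $s^{-1/4}$ is ``absorbed into the exponential'' is not quite right on $[0,\und s]$ since $\und s^{-1/4}=\e^{\delta/6}\to 0$; this factor is precisely what produces the $\delta/6$ in \eqref{cinq.airy}, so you have in fact already accounted for it in the chain rather than at the end.
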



\begin{proof}
	We follow here the same proof as the one of Proposition \ref{prop.below.smooth}. Two differences appear: the extra weight $\e^{-1/3}$ coming from the specificity of the Airy propagator in Lemma \ref{lemma.growth.propa.airy} and the extra $t_{\star}(R^{-1})$ in the remainder term of Corollary \ref{reg_dtheta.airy}.
	
	Taking both those differences into account, the following constraints hold
	\begin{eqnarray}
		\beta^{-1}\e^{2/3}\e^{-3\delta/2}\,e^{\beta \e^{-2\delta/3}} & \ll & 1 \label{un.airy} \\
		\beta^{-1} \e^{-5\delta/6} R^{-1}\,e^{\beta \e^{-2\delta/3}} & \ll & 1 \label{deux.airy} \\
		\beta^{-1} \e^{-\delta/6} \e^{-2/3}t_{\star}(R^{-1})\,e^{\beta \e^{-2\delta/3}} & \ll & 1 \label{trois.airy} \\
		\beta^{-1} \e^{-\delta/6} \e^{1/3}e^{M'-M}\,e^{\beta \e^{-2\delta/3}} & \ll & 1 \label{quatre.airy} \\
		\e^{-1/3} \e^{-\delta/6} R\rho^{-1}\,e^{\beta \e^{-2\delta/3}} & \ll & 1 \label{cinq.airy} \\
		\e^{-2\delta/3} \e^{2/3}\rho & \ll & 1 \label{six.airy}
	\end{eqnarray}
	
	\noindent where there holds $	\und{s} \approx \e^{-2\delta/3}$. Again, those asymptotic inequalities imply that $\beta = \e^{2\delta/3}$ and inequality \eqref{un.airy} is replaced by
	$$ 
		\e^{2/3-13\delta/6} \ll 1 
	$$
	
	\noindent which gives the limitation $\delta <4/13$ on the Gevrey index.
	
	To find now $R$ and $\rho$, we first use \eqref{deux.airy} and \eqref{cinq.airy} to get
	\be
		\label{sept.airy}
		\e^{2/3-2\delta/3} \ll \rho^{-1} \ll \e^{1/3+\delta/6}R^{-1} .
	\ee
	
	\noindent For an asymptotic upper bound for $R^{-1}$, we have in this case two possibilities, thanks to \eqref{deux.airy} and \eqref{trois.airy}. If we assume here that $t_{\star}$ is of order $k \geq 2$ in $x$, these two inequalities are equivalent to
	\be
		\label{local.premier}
		R^{-1} \ll \e^{3\delta/2}
	\ee
	
	\noindent and
	\be
		\label{local.second}
		 R^{-1} \ll \e^{\frac{1}{k}(2/3+5\delta/6)} .
	\ee
	
	\noindent The question is which one of \eqref{local.premier} or \eqref{local.second} is a stronger constraint on $R$ and $\rho$. By simple computations, we prove
	$$
		3\delta/2 < \frac{1}{k}(2/3+5\delta/6) \quad \Longleftrightarrow \quad \delta < \frac{1}{9k/4 - 5/4}.
	$$
	
	\noindent We are then reduced to study two cases:
	\begin{itemize}
	
		\item If $\delta < \frac{1}{9k/4 - 5/4}$, then $R^{-1} \ll \e^{\frac{1}{k}(2/3+5\delta/6)} \ll \e^{3\delta/2}$. With \eqref{sept.airy}, we get the constraint $ \e^{2/3-2\delta/3} \ll \e^{1/3 + \frac{1}{k}(2/3+5\delta/6)} $, which is equivalent to $ \delta < \frac{1/2 - 1/k}{1 + 1/k} $. We note in particular that the non degenerate Airy case $k=2$ is out of reach with our method. In the degenerate case $k=4$, we have the limitation $\delta < \frac{1/2 - 1/k}{1 + 1/k} = 4/21$, compatible with $ \delta < \frac{1}{9k/4 - 5/4} = 4/31 $.
		
		\item If $\delta > \frac{1}{9k/4 - 5/4}$, then $R^{-1} \ll \e^{3\delta/2} \ll \e^{\frac{1}{k}(2/3+5\delta/6)} $. With \eqref{sept.airy}, we get the constraint $ \e^{2/3-2\delta/3} \ll \e^{1/3 + 3\delta/2} $, which is equivalent to $ \delta < 2/13 $. It is incompatible with $\delta > \frac{1}{9k/4 - 5/4}$ when $k=2$.
		
	\end{itemize}
	
	In each of the previous cases, the case $k=2$ leads to a contradiction, hence proving that the non-degenerate Airy transition is out of reach of our method. In the degenerate case $k=4$, the previous analysis shows that the limiting Gevrey index is $2/13$.
	
\end{proof}

\begin{remark}
	On the contrary of the smooth case and Remark {\rm \ref{remark.smooth}}, the limitation $\delta <2/13$ still holds when ${\rm R}^{{\rm Ai}} \equiv 0$. This can be explained as $t_{\star}$ represents the transition time from hyperbolicity to ellipticity, and the domain of hyperbolicity is too large to be considered as an elliptic region. 
\end{remark}


\subsection{Conclusion: Hadamard instability in Gevrey spaces}
\label{section.hadamard}


To close the proofs of Theorem \ref{theorem.smooth} and Theorem \ref{theorem.airy} we have now to get an estimate of the ratio
$$ \frac{\dsp ||u_{\e}||_{L^2(\Omega_{R,\rho})}}{\dsp ||h_{\e}^{\eta}||^{\a}_{\sigma,c,K}} .$$

\noindent The previous Sections show the existence, in either the smooth or the Airy case, of a family of solutions $\u$ starting from $\f^{\eta}$ of the fixed point equation \eqref{fixed.point.equation}. Thanks to the ansatz \eqref{ansatz} which we recall here
$$
	u_{\e}(t,x) = \e^{2/(1+\eta)}\u(\e^{-1/(1+\eta)}\,t,x,x\cdot\xi_0/\e)
$$

\noindent we have then a family of solutions $u_{\e}$ existing in domains $\Omega_{R,\rho}$, for some well-chosen parameters described in the proof of Proposition {\rm \ref{prop.below.smooth}} or {\rm \ref{prop.below.airy}}. In both cases we can verify that domains $\Omega_{R,\rho}$ contain the cube of size $\e$
$$ 
	C_{\e} = \{ (t,x) \,|\, \und{t} - \e < t <\und{t} ,\quad |x| < \e \} 
$$

\noindent where we denote simply $ \und{t} = \e^{1/(1+\eta)}  \und{s}$. The conclusion of the proof of Theorems \ref{theorem.smooth} and \ref{theorem.airy} is the same as in Section 7 in \cite{morisse2016I}.

\section{Appendix: on the Airy equation}
\label{section.appendix}


The purpose of this Appendix is to bring some crucial elements on the Airy equation, and to complete the proofs of Lemma \ref{lemma.growth.propa.airy} and \ref{lemma.growth.freesolution.airy}. We recall here equation \eqref{local.Vntilde.airy}, with $\eta = 1/2$, that appears on Lemma \ref{lemma.growth.propa.airy}:
\be
	\label{eq.Un.airy}
	\d_{s} \widetilde{V}_n(s',s) - in
		\left(
	\begin{array}{cc}
		0 & \e^{-1/3} \\
		-\e^{1/3}\g_0^{\,2}s & 0 
	\end{array}
	\right)
	\widetilde{V}_n(s',s) = 0 \qquad \widetilde{V}_n(s',s') = \Id .
\ee

\noindent The aim here is to get upper bounds for the matrix flow $\widetilde{V}_n(s',s)$ for all $0 \leq s' \leq s$ and all $n\in\mathbb{Z}$, and hence to complete the proof of Lemma \ref{lemma.growth.propa.airy}. For simplicity we denote 
\be
	\widetilde{V}_n(s',s) =
	\left(
		\begin{array}{cc}
			\widetilde{V}_{n,1,1} & \widetilde{V}_{n,1,2} \\
			\widetilde{V}_{n,2,1} & \widetilde{V}_{n,2,2}
		\end{array}
	\right)
	\qquad \widetilde{V}_{n,p,q}(s',s') = \delta(p,q) .
\ee 


\subsection{Reduction to the scalar Airy equation and resolution}


The vector equation \eqref{eq.Un.airy} becomes the system of scalar equations
\be
	\label{airy.system}
	\left\{
		\begin{array}{cclcc}
			\d_{s}\widetilde{V}_{n,1,1} & = & \phantom{{}-{}}in \e^{-1/3} \, \widetilde{V}_{n,2,1} & \quad & \widetilde{V}_{n,1,1}(s',s') = 1 \\
			\d_{s}\widetilde{V}_{n,1,2} & = & \phantom{{}-{}}in \e^{-1/3} \, \widetilde{V}_{n,2,2} & \quad & \widetilde{V}_{n,1,2}(s',s') = 0 \\[3mm]
			\d_{s}\widetilde{V}_{n,2,1} & = & {}- in \e^{1/3}\g_0^{\,2}s \, \widetilde{V}_{n,1,1} & \quad & \widetilde{V}_{n,2,1}(s',s') = 0 \\
			\d_{s}\widetilde{V}_{n,2,2} & = & {}- in \e^{1/3}\g_0^{\,2}s \, \widetilde{V}_{n,1,2} & \quad & \widetilde{V}_{n,2,2}(s',s') = 1
		\end{array}
	\right.
\ee

\noindent Differentiating the first equation and using next the third one, the entry $\widetilde{V}_{n,1,1}$ solves the second order scalar differential equation
\be
	\label{second.order.1.1}
	\d_{s}^2 \widetilde{V}_{n,1,1}(s',s) = (n\g_0)^2 \,s \,\widetilde{V}_{n,1,1}(s',s)
\ee

\noindent with the initial condition for $ \widetilde{V}_{n,1,1} $:
\be
	\label{init.1.1}
	\widetilde{V}_{n,1,1}(s',s') = 1 .
\ee

\noindent The initial condition for $ \d_{s}\widetilde{V}_{n,1,1} $ comes from the first equation of the system \eqref{airy.system} and the initial condition for $\widetilde{V}_{n,2,1}$, as we have
\begin{eqnarray}
	\d_{s}\widetilde{V}_{n,1,1}(s',s') & = & in \e^{-1/3} \, \widetilde{V}_{n,2,1}(s',s') \nonumber\\
		& = & 0 . \label{init.1.1.ds}
\end{eqnarray}

\noindent Note also that we can retrieve $\widetilde{V}_{n,2,1}$ thanks to the first line of \eqref{airy.system}, as
\be
	\label{retrieve.2.1}
	\widetilde{V}_{n,2,1}(s',s) = \frac{\e^{1/3}}{in} \, \d_{s}\widetilde{V}_{n,1,1}(s',s)
\ee

\noindent Doing the same for the second and fourth equations, we obtain the same second order scalar differential equation for $\widetilde{V}_{n,1,2}$
\be
	\label{second.order.1.2}
	\d_{s}^2 \widetilde{V}_{n,1,2}(s',s) = (n\g_0)^2 \,s \,\widetilde{V}_{n,1,2}(s',s)
\ee

\noindent with initial conditions
\be
	\label{init.1.2}
	\widetilde{V}_{n,1,2}(s',s') = 0
\ee

\noindent and 
\be
	\label{init.1.2.ds}
	\d_{s}\widetilde{V}_{n,1,2}(s',s') = in\e^{-1/3} .
\ee

\noindent We have also the relation
\be
	\label{retrieve.2.2}
	\widetilde{V}_{n,2,2}(s',s) = \frac{\e^{1/3}}{in} \, \d_{s}\widetilde{V}_{n,1,2}(s',s) .
\ee
	
\noindent Equations \eqref{second.order.1.1} and \eqref{second.order.1.2} are exactly the $\e$-independent scalar Airy equation
\be
	\label{def.Airy.n}
	y_n''(s) = (|n|\g_0)^2 \,s\,y_n(s)
\ee

\noindent which is a second-order scalar differential equation. The solutions of \eqref{def.Airy.n} are given by the following

\begin{lemma}[Scalar Airy equation]
	\label{lemma.scalar.airy}
	For $n\in\mathbb{Z}^{*}$, let ${\rm Ai_n}(z)$ be 
	\be
		\label{def.ain2}
		{\rm Ai_n}(z) = (2\pi)^{-1}\int_{{\rm Im}(\zeta) = a} \exp\left((|n|\g_0)(i\zeta^3/3 + i\zeta z)\right)d\zeta
	\ee
	\noindent for $a>0$. Then for all $n\in\Z$, ${\rm Ai_n}$ is a holomorphic function in $\C$ independent of $a$, and the couple $({\rm Ai_n}(\cdot),{\rm Ai_n}(j\cdot))$ is a basis of solutions of \eqref{def.Airy.n}, with $j=e^{2i\pi/3}$.
\end{lemma}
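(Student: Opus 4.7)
The plan has three parts: establishing that ${\rm Ai_n}$ is a well-defined holomorphic function independent of the contour height $a$; checking by integration by parts that it solves \eqref{def.Airy.n}; and finally showing that $\{{\rm Ai_n}(\cdot),{\rm Ai_n}(j\,\cdot)\}$ is a basis of solutions.

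First I would parametrize the contour as $\zeta = \xi + ia$ with $\xi\in\R$ and observe that
$$
	{\rm Re}(i\zeta^3/3) = -a\xi^2 + a^3/3,
$$
so that on this contour the integrand is bounded by $C(a,|z|)\exp(-|n|\g_0\,a\,\xi^2)$, uniformly for $z$ in any bounded set of $\C$. This Gaussian decay in $\xi$ yields absolute convergence of the integral and, applied to the $z$-differentiated integrand, gives holomorphy of ${\rm Ai_n}$ on all of $\C$ by dominated convergence. Independence from $a$ follows by applying Cauchy's theorem to the rectangle $\{|{\rm Re}\,\zeta|\leq M,\ a_1\leq{\rm Im}\,\zeta\leq a_2\}$: the two vertical sides carry integrands of modulus $O(\exp(-|n|\g_0\,M^2\,\min(a_1,a_2)))$ and hence vanish as $M\to+\infty$, so the integrals over ${\rm Im}\,\zeta = a_1$ and ${\rm Im}\,\zeta = a_2$ coincide.

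Next, differentiating twice under the integral sign (legitimate by the same bound) gives
$$
	{\rm Ai_n}''(z) = -(|n|\g_0)^2\,(2\pi)^{-1}\int_{{\rm Im}\,\zeta=a} \zeta^2 \exp\left((|n|\g_0)(i\zeta^3/3 + i\zeta z)\right) d\zeta.
$$
The key identity is
$$
	\frac{d}{d\zeta}\exp\left((|n|\g_0)(i\zeta^3/3 + i\zeta z)\right) = i(|n|\g_0)(\zeta^2 + z)\exp\left((|n|\g_0)(i\zeta^3/3 + i\zeta z)\right),
$$
and integration by parts along the horizontal contour --- with the boundary terms at ${\rm Re}\,\zeta = \pm\infty$ vanishing by the Gaussian decay established above --- yields $\int (\zeta^2+z)\exp(\cdots)d\zeta = 0$, which rearranges to ${\rm Ai_n}''(z) = (|n|\g_0)^2 z\,{\rm Ai_n}(z)$, i.e.\ equation \eqref{def.Airy.n}. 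That ${\rm Ai_n}(j\,\cdot)$ is also a solution then follows from a short computation: if $y''(s) = (|n|\g_0)^2 s\,y(s)$ and $w(s) = y(js)$, then $w''(s) = j^2 y''(js) = j^3(|n|\g_0)^2 s\,w(s) = (|n|\g_0)^2 s\,w(s)$ since $j^3 = 1$.

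The main obstacle is linear independence. The cleanest route is to scale away the parameter $|n|\g_0$: the affine change of variables $\zeta = u/(|n|\g_0)^{1/3}$, combined with the contour-independence of step one (used to bring the resulting line ${\rm Im}\,u = a(|n|\g_0)^{1/3}$ back to any desired positive height), identifies ${\rm Ai_n}(z)$ with $(|n|\g_0)^{-1/3}$ times the classical Airy function evaluated at $(|n|\g_0)^{2/3}z$. Linear independence of ${\rm Ai_n}(\cdot)$ and ${\rm Ai_n}(j\,\cdot)$ then reduces to the classical statement that $\{{\rm Ai}(\cdot),{\rm Ai}(j\,\cdot)\}$ is a basis of solutions of $y'' = zy$. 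This in turn follows from constancy of the Wronskian $W(z) = j\,{\rm Ai}(z)\,{\rm Ai}'(jz) - {\rm Ai}'(z)\,{\rm Ai}(jz)$ together with the evaluation $W(0) = (j-1)\,{\rm Ai}(0)\,{\rm Ai}'(0) \neq 0$, using the nonzero classical special values ${\rm Ai}(0) = 1/(3^{2/3}\Gamma(2/3))$ and ${\rm Ai}'(0) = -1/(3^{1/3}\Gamma(1/3))$.
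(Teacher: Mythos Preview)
Your proof is correct and complete. The paper itself does not give a detailed argument but simply refers to the standard treatment following Definition 7.6.8 in H\"ormander's \emph{The Analysis of Linear Partial Differential Operators}; your three-step outline (Gaussian decay on horizontal contours for convergence and contour-independence, integration by parts for the differential equation, and the Wronskian computation $W(0)=(j-1)\,{\rm Ai}(0)\,{\rm Ai}'(0)\neq 0$ for linear independence) is precisely that standard argument spelled out in full.
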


To prove this Lemma, it suffices to adapt the proof following Definition 7.6.8 in \cite{hormander1983analysis}.

As $({\rm Ai_n}(\cdot),{\rm Ai_n}(j\cdot))$ is a basis of solutions of equation \eqref{def.Airy.n}, and as both entries $\widetilde{V}_{n,1,1}(s',s)$ and $\widetilde{V}_{n,1,2}(s',s)$ solve equations \eqref{second.order.1.1} and \eqref{second.order.1.2}, there are $(\a_1(s'),\beta_1(s'))\in\R^2$ and $(\a_2(s'),\beta_2(s'))\in\R^2$ such that
\begin{eqnarray}
	\widetilde{V}_{n,1,1}(s',s) & = & \a_1(s') {\rm Ai_n}(s) + \beta_1(s'){\rm Ai_n}(js) \label{1.1}\\
	\widetilde{V}_{n,1,2}(s',s) & = & \a_2(s') {\rm Ai_n}(s) + \beta_2(s'){\rm Ai_n}(js) \label{1.2}
\end{eqnarray}

\noindent By the relations \eqref{retrieve.2.1} and \eqref{retrieve.2.2}, there holds also
\begin{eqnarray}
	\widetilde{V}_{n,2,1}(s',s) & = & \frac{\dsp \e^{1/3}}{\dsp in}\left(\a_1(s') {\rm Ai_n}'(s) + j\beta_1(s'){\rm Ai_n}'(js)\right) \label{2.1}\\
	\widetilde{V}_{n,2,2}(s',s) & = & \frac{\dsp \e^{1/3}}{\dsp in}\left(\a_2(s') {\rm Ai_n}'(s) + j\beta_2(s'){\rm Ai_n}'(js)\right) . \label{2.2}
\end{eqnarray}

\noindent This is equivalent to say that both vectors
\be
	\label{basis.airy}
	\begin{pmatrix}
		{\rm Ai_n}(s) \\
		-in^{-1}\e^{1/3} {\rm Ai_n}'(s)
	\end{pmatrix}
	\quad\text{and}\quad
	\begin{pmatrix}
		{\rm Ai_n}(js) \\
		-in^{-1}\e^{1/3}j {\rm Ai_n}'(js)
	\end{pmatrix}
\ee

\noindent forms a basis of solutions of the system \eqref{airy.system}. The functions $(\a_k(s'),\beta_k(s'))$ are determined by the initial conditions \eqref{init.1.1} and \eqref{init.1.1.ds} for $k=1$ and \eqref{init.1.2} and \eqref{init.1.2.ds} for $k=2$. We obtain the matrix representation of the $(\a_k(s'),\beta_k(s'))$:
\be
	\label{matrix.alpha.beta}
	\left(
		\begin{array}{cc}
			\a_1(s') & \a_2(s') \\[3mm]
			\beta_1(s') & \beta_2 (s')
		\end{array}
	\right)
	=
	\frac{\dsp 1}{\dsp D_n(s')}
	\left(
		\begin{array}{cc}
			\frac{\dsp \e^{1/3}}{\dsp in}j{\rm Ai_n}'(js') & -{\rm Ai_n}(js') \\[3mm]
			-\frac{\dsp \e^{1/3}}{\dsp in}{\rm Ai_n}'(s') & {\rm Ai_n}(s')
		\end{array}
	\right)
\ee

\noindent where $D_n(s')$ is the determinant of the basis \eqref{basis.airy}, that is 
$$ D_n(s') :=\frac{\dsp \e^{1/3}}{\dsp in}{\rm Ai_n}(s') j{\rm Ai_n}'(js') - \frac{\dsp \e^{1/3}}{\dsp in}{\rm Ai_n}(js') {\rm Ai_n}'(s') $$

\noindent which is in fact independent of $s'$:
$$ D_n(s') \equiv D_n(0) = \frac{\dsp \e^{1/3}}{\dsp in}(j-1){\rm Ai_n}(0){\rm Ai_n}'(0) .  $$

\noindent For simplicity we denote 
\be
	C = \left((j-1){\rm Ai_n}(0){\rm Ai_n}'(0)\right)^{-1} .
\ee 

Putting altogether equalities \eqref{1.1} to \eqref{matrix.alpha.beta}, we obtain
\be
	\label{matrix.U.Ai}
	\left\{
		\begin{array}{ccl}
			\widetilde{V}_{n,1,1}(s',s) & = & \quad \quad \,\, C\Big(j{\rm Ai_n}'(js') {\rm Ai_n}(s) - {\rm Ai_n}'(s') {\rm Ai_n}(js)\Big) \\[3mm]
			\widetilde{V}_{n,2,1}(s',s) & = & C\frac{\dsp \e^{1/3}}{\dsp in}\,\Big(j{\rm Ai_n}'(js') {\rm Ai_n}'(s) - j{\rm Ai_n}'(s'){\rm Ai_n}'(js)\Big) \\[3mm]
			\widetilde{V}_{n,1,2}(s',s) & = & C\frac{\dsp in}{\dsp \e^{1/3}}\,\,\,\Big(-{\rm Ai_n}(js'){\rm Ai_n}(s) + {\rm Ai_n}(s'){\rm Ai_n}(js)\Big) \\[3mm]
			\widetilde{V}_{n,2,2}(s',s) & = & \quad\quad-C\Big({\rm Ai_n}(js'){\rm Ai_n}'(s) - j{\rm Ai_n}(s'){\rm Ai_n}'(js)\Big) .
		\end{array}
	\right.
\ee


\subsection{Upper bounds for the propagator: proof of Lemma (3.4)}
\label{subsection.prop}


In order to prove Lemma \ref{lemma.growth.propa.airy}, we derive asymptotic estimates of ${\rm Ai_n}(s)$ and ${\rm Ai_n}(js)$ when $s$ real and $s\to {}+ \infty$. 

\begin{lemma}[Asymptotic estimates for the Airy function]
	\label{lemma.asymptotics}
	There holds for all $n\in\Z^{*}$ and $s\geq 1$, up to some complex constants:
		\begin{eqnarray}
		\label{asymptotic.ai_n.z}
		{\rm Ai_n}(s)   & \approx & s^{-1/4}|n|^{-1/2}\,\exp(-|n|\g_0 (2/3)s^{3/2}) \\
		\label{asymptotic.ai_n.jz}
		{\rm Ai_n}(js)  & \approx & s^{-1/4}|n|^{-1/2} \, \exp(|n|\g_0 (2/3)s^{3/2})  \\
		\label{asymptotic.ai'_n.z}
		{\rm Ai_n}'(s)  & \approx & s^{1/4}|n|^{1/2} \, \exp(-|n|\g_0 (2/3)s^{3/2}) \\
		\label{asymptotic.ai'_n.jz}
		{\rm Ai_n}'(js) & \approx &  s^{1/4}|n|^{1/2} \, \exp(|n|\g_0 (2/3)s^{3/2})  .
\end{eqnarray}

	\noindent In particular, the Airy function ${\rm Ai_n}$ and its derivative satisfy the upper bounds
	\be
		\label{bound.ai_n}
		e^{|n|\g_0 (2/3)s^{3/2}}\left|{\rm Ai_n}(s)\right| + e^{-|n|\g_0 (2/3)s^{3/2}}\left|{\rm Ai_n}(js)\right| \lesssim |n|^{-1/2}(1+s)^{-1/4} \quad \forall 0 \leq s ,\; \forall n\in\Z^*
	\ee
	
	\noindent and
	\be
		\label{bound.ai_n'}
		e^{|n|\g_0 (2/3)s^{3/2}}\left|{\rm Ai_n}'(s)\right| + e^{-|n|\g_0 (2/3)s^{3/2}}\left|{\rm Ai_n}'(js)\right|  \lesssim |n|^{1/2}s^{1/4} \quad \forall 0 \leq s ,\; \forall n\in\Z^*
	\ee

\end{lemma}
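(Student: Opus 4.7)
The plan is first to reduce ${\rm Ai_n}$ to the classical Airy function. Substituting $\zeta = (|n|\g_0)^{-1/3}\eta$ in \eqref{def.ain2}, the exponent $(|n|\g_0)(i\zeta^3/3 + i\zeta z)$ becomes $i\eta^3/3 + i(|n|\g_0)^{2/3}\eta z$, so that
$$
	{\rm Ai_n}(z) = (|n|\g_0)^{-1/3}\,{\rm Ai}\bigl((|n|\g_0)^{2/3}z\bigr),\qquad {\rm Ai_n}'(z) = (|n|\g_0)^{1/3}\,{\rm Ai}'\bigl((|n|\g_0)^{2/3}z\bigr),
$$
where ${\rm Ai}$ is the classical Airy function (the case $\lambda = 1$). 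The four asymptotics \eqref{asymptotic.ai_n.z}--\eqref{asymptotic.ai'_n.jz} then reduce to the four classical asymptotic expansions of ${\rm Ai}(x)$, ${\rm Ai}'(x)$, ${\rm Ai}(jx)$ and ${\rm Ai}'(jx)$ for $x = (|n|\g_0)^{2/3}s$ large, the $|n|^{-1/2}$ prefactor in \eqref{asymptotic.ai_n.z} arising as $(|n|\g_0)^{-1/3}\cdot(|n|\g_0)^{-1/6}$, and similarly for the three other estimates.

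Next I would apply the steepest-descent method to the standard Airy integral. After the further rescaling $\eta = x^{1/2}w$, the integral for ${\rm Ai}(x)$ reads $(2\pi)^{-1}x^{1/2}\int e^{x^{3/2}\phi(w)}\,dw$ with phase $\phi(w) = iw^3/3 + iw$, whose saddle points are $w = \pm i$, with critical values $\phi(\pm i) = \mp 2/3$ and $\phi''(\pm i) = \mp 2$. Deforming the contour $\{{\rm Im}\,\eta = a\}$ through the saddle at $w = i$ along the steepest-descent direction and applying the Gaussian approximation yields ${\rm Ai}(x) \sim (2\sqrt{\pi})^{-1}x^{-1/4}e^{-(2/3)x^{3/2}}$; differentiating under the integral before the saddle-point reduction produces an extra factor $i x^{1/2}w_{\ast} = -x^{1/2}$ at the saddle, hence ${\rm Ai}'(x) \sim -(2\sqrt{\pi})^{-1}x^{1/4}e^{-(2/3)x^{3/2}}$.

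For ${\rm Ai}(jx)$ and ${\rm Ai}'(jx)$ with $x > 0$, I would combine the classical connection formula ${\rm Ai}(x) + j\,{\rm Ai}(jx) + j^{-1}\,{\rm Ai}(j^{-1}x) = 0$ with the known subdominant asymptotics on the rays $\arg x = 0$ and $\arg x = -2\pi/3$ to extract the exponentially growing asymptotic on the ray $\arg x = 2\pi/3$; equivalently, one can deform the $\eta$-contour directly into one passing through the rotated saddle $w = -i$, which becomes the dominant critical point on this ray. Either approach produces ${\rm Ai}(jx) \sim c\,x^{-1/4}e^{(2/3)x^{3/2}}$ and ${\rm Ai}'(jx) \sim c'\,x^{1/4}e^{(2/3)x^{3/2}}$ as $x \to +\infty$, and combining with the rescaling of the first paragraph gives \eqref{asymptotic.ai_n.jz} and \eqref{asymptotic.ai'_n.jz}.

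Finally, to upgrade the point-wise asymptotics to the uniform bounds \eqref{bound.ai_n}--\eqref{bound.ai_n'}, the idea is to patch the saddle-point estimate, valid for $x = (|n|\g_0)^{2/3}s \geq 1$, with a bounded-argument estimate obtained by choosing the contour $\{{\rm Im}\,\eta = 1\}$ in the integral representation and exploiting the Gaussian decay of the integrand in the real direction: this gives $|{\rm Ai}(x)| + |{\rm Ai}'(x)| + |{\rm Ai}(jx)| + |{\rm Ai}'(jx)| \lesssim 1$ uniformly on $x \in [0,1]$. Monotonicity of the decay-normalised quantities $(1+x)^{1/4}|{\rm Ai}(x)|e^{(2/3)x^{3/2}}$ and its analogues on $[0,\infty)$ then supplies a single envelope $(1+s)^{-1/4}$ valid on all of $[0,\infty)$. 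The main obstacle is the careful bookkeeping of the $|n|$-dependence through the two rescalings $\zeta \to \eta \to w$ so as to align the exponents exactly with $(|n|\g_0)(2/3)s^{3/2}$, and the verification that the steepest-descent deformations used for the ${\rm Ai}(jx)$ and ${\rm Ai}'(jx)$ asymptotics are compatible with the prescribed contour $\{{\rm Im}\,\eta = a\}$ without crossing any obstructions.
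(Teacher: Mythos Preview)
Your approach is correct and in fact the more classical one: you first rescale $\zeta = (|n|\g_0)^{-1/3}\eta$ to reduce ${\rm Ai_n}$ to the standard Airy function ${\rm Ai}$, and then invoke (or rederive by steepest descent) the well-known asymptotics of ${\rm Ai}$ and ${\rm Ai}'$ along the rays $\arg x = 0$ and $\arg x = 2\pi/3$. The paper instead works directly with ${\rm Ai_n}$: it shifts the contour in \eqref{def.ain2} to ${\rm Im}\,\zeta = s^{1/2}$ (the saddle height), which immediately factors out $e^{-|n|\g_0(2/3)s^{3/2}}$, and then a single real change of variable $\xi \mapsto (|n|\g_0 s^{1/2})^{-1/2}\xi$ produces the prefactor $|n|^{-1/2}s^{-1/4}$ and leaves a Gaussian-dominated integral tending to $\sqrt{2\pi}$. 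The two arguments are the same saddle-point computation, but the paper's version never introduces the intermediate object ${\rm Ai}$ and tracks the $|n|$-dependence in one step rather than through two successive rescalings; your route has the compensating advantage that the classical asymptotics, the connection formula, and the contour deformations for ${\rm Ai}(jx)$ can simply be cited. One small caution: your final patching step invokes monotonicity of $(1+x)^{1/4}|{\rm Ai}(x)|e^{(2/3)x^{3/2}}$ on $[0,\infty)$, which is not obvious and is not actually needed---boundedness on $[0,1]$ together with the asymptotic on $[1,\infty)$ already gives the uniform envelope, which is all the paper claims.
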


\begin{proof}

For $s \geq 1$, we put $a= is^{1/2}$ into the definition \eqref{def.ain2} to obtain
\begin{eqnarray*}
	{\rm Ai_n}(s) & = & (2\pi)^{-1}\int_{{\rm Im}\zeta = is^{1/2}} \exp\left((|n|\g_0)(i\zeta^3/3 + i\zeta z)\right)d\zeta \\
		& = & (2\pi)^{-1}\int_{\mathbb{R}} \exp\left((|n|\g_0)(i(\xi+ i s^{1/2})^3/3 + i(\xi+ i s^{1/2}) s)\right)d\xi \\
		& = & (2\pi)^{-1} \, e^{-|n|\g_0 (2/3)s^{3/2}} \, \int_{\mathbb{R}} \exp\left((|n|\g_0)(i\xi^3/3 - \xi^2s^{1/2})\right)d\xi .
\end{eqnarray*}	

\noindent By the change of variables $ \xi \mapsto (|n|\g_0s^{1/2})^{-1/2}\xi $ in the integral, there holds
$$
	{\rm Ai_n}(s) = \frac{|n|^{-1/2}s^{-1/4}}{2\pi\g_0^{1/2}}\, e^{-|n|\g_0 (2/3)s^{3/2}} \, \int_{\mathbb{R}} \exp\left(i(|n|\g_0)^{{}-1/2}s^{{}-3/4}\xi^3/3 - \xi^2\right)d\xi.
$$

\noindent As the last integral satisfies the asymptotic development, for $s\to{}+\infty$:
\begin{eqnarray*}
	\int_{\mathbb{R}} \exp\left(i(|n|\g_0)^{{}-1/2}s^{{}-3/4}\xi^3/3 - \xi^2\right)d\xi = \sqrt{2\pi} + O\left(|n|^{-1/2}s^{-3/4}\right)
\end{eqnarray*}

\noindent we obtain \eqref{asymptotic.ai_n.z}. By an analog computation we have \eqref{asymptotic.ai_n.jz}, \eqref{asymptotic.ai'_n.z} and \eqref{asymptotic.ai'_n.jz}.

From those asymptotic estimates, we deduce immediately uniform bounds for ${\rm Ai_n} $ and the time derivative ${\rm Ai_n}'$.

\end{proof}

Thanks to the previous Lemma, we end the proof of Lemma \ref{lemma.growth.propa.airy} by getting the upper bound of the propagator $\widetilde{V}_n(s',s)$. Combining the expression of $\widetilde{V}_n$ in function of ${\rm Ai_n}$ given by \eqref{matrix.U.Ai} with the estimates \eqref{bound.ai_n} and \eqref{bound.ai_n'}, we obtain the upper bounds for the coefficients of the matrix flow $\widetilde{V}_n(s',s)$, for $0 \leq s' \leq s \leq \e^{-2/3}$:
\be
	\left\{
		\begin{array}{ccl}
			\left|\widetilde{V}_{n,1,1}(s',s)\right| & \approx & s'^{1/4}(1+s)^{-1/4}\exp\left(|n|\g_0 \frac{2}{3}(s^{3/2}-s'^{3/2})\right) \\[3mm]
			\left|\widetilde{V}_{n,2,1}(s',s)\right| & \approx & \e^{1/3}s'^{1/4}s^{1/4}\exp\left(|n|\g_0 \frac{2}{3}(s^{3/2}-s'^{3/2})\right) \\[3mm]
			\left|\widetilde{V}_{n,1,2}(s',s)\right| & \approx & \e^{-1/3}(1+s')^{-1/4}(1+s)^{-1/4}\exp\left(|n|\g_0 \frac{2}{3}(s^{3/2}-s'^{3/2})\right) \\[3mm]
			\left|\widetilde{V}_{n,2,2}(s',s)\right| & \approx & (1+s')^{-1/4}s^{1/4}\exp\left(|n|\g_0 \frac{2}{3}(s^{3/2}-s'^{3/2})\right)
		\end{array}
	\right.
\ee

\noindent As 
$$
	\e^{-1/3}(1+s)^{-1/4}> s^{1/4} \qquad \forall 0 \leq s < \e^{-2/3}
$$

\noindent we obtain the upper bound for the propagator
$$
	|\widetilde{V}_n(s',s)| \lesssim \e^{-1/3}(1+s')^{-1/4}(1+s)^{-1/4}\exp(|n|\g_0 (2/3)(s^{3/2}-s'^{3/2})) \quad \forall 0\leq s' \leq s < \e^{-2/3}
$$

\noindent which implies \eqref{growth.propa.airy} and ends the proof of Lemma \ref{lemma.growth.propa.airy}.


\subsection{Growth of the free solution: proof of Lemma (3.6)}
\label{subsection.free}


We prove here Lemma \ref{lemma.growth.freesolution.airy}, following the proof of Lemma \ref{lemma.growth.freesolution.smooth}. We showed in it that it suffices to prove the lower bound for $\widetilde{V}_n$. Thanks to the equalities \eqref{matrix.U.Ai}, a simple computation gives us 
$$
	\widetilde{V}_{+1}(0,s)
	\begin{pmatrix}
		{\rm Ai_1}(0) \\
		-i\e^{1/3} j{\rm Ai_1}'(0)
	\end{pmatrix} = 
	\begin{pmatrix}
		{\rm Ai_1}(js) \\
		-i\e^{1/3} j{\rm Ai_1}'(js)
	\end{pmatrix}
$$

\noindent and also
$$
	\widetilde{V}_{-1}(0,s)
	\begin{pmatrix}
		{\rm Ai_1}(0) \\
		i\e^{1/3} j{\rm Ai_1}'(0)
	\end{pmatrix} = 
	\begin{pmatrix}
		{\rm Ai_1}(js) \\
		i\e^{1/3} j{\rm Ai_1}'(js)
	\end{pmatrix} .
$$

\noindent We denote 
\be
	\label{local.widetilde.f}
	\widetilde{\f}(s,\theta) = {\rm Re}\left(\widetilde{V}_{+1}(0,s)
		\begin{pmatrix}
			{\rm Ai_1}(0) \\
			- i\e^{1/3} j{\rm Ai_1}'(0)
		\end{pmatrix}
		e^{i\theta} + \widetilde{V}_{-1}(0,s)
		\begin{pmatrix}
			{\rm Ai_1}(0) \\
			i\e^{1/3} j{\rm Ai_1}'(0)
		\end{pmatrix}
		e^{-i\theta} \right).
\ee

\noindent and we compute
\be
	\label{free.solution.airy.exact}
	\widetilde{\f}(s,\theta) = 2{\rm Re}
	\begin{pmatrix}
		{\rm Ai_1}(js) \cos(\theta) \\
		-\e^{1/3} j{\rm Ai_1}'(js) \sin(\theta)
	\end{pmatrix} .
\ee

\noindent Next we denote $\widetilde{\f}_1(s)$ and $\widetilde{\f}_2(s)$ the two components of the vector $\widetilde{\f}$ defined by \eqref{local.widetilde.f}. Thanks to Lemma \ref{lemma.asymptotics}, we have
$$
	|\widetilde{\f}_1(s) | \approx {\rm Ai_1}(js) \approx s^{-1/4} \exp(\g_0 2/3s^{3/2}) 
$$

\noindent and

$$
	|\widetilde{\f}_2(s) | \approx \e^{1/3} {\rm Ai_1}'(js) \approx \e^{1/3}s^{1/4} \exp(\g_0 2/3s^{3/2})
$$

\noindent for $1\leq s <\e^{-2/3}$. As $ \e^{1/3}s^{1/4} < s^{-1/4} $ for $s<\e^{-2/3}$, the $L^{\infty}$ norm of the vector $\widetilde{\f}(s,\theta)$ satisfies 
$$
	|\widetilde{\f}(s,\theta)|_{\infty} = |\widetilde{\f}_1(s,\theta)| \approx s^{-1/4} \exp(\g_0 2/3s^{3/2}) 
$$

\noindent for all $1 \leq s <\e^{-2/3}$. Using the same steps as in the proof of Lemma \ref{lemma.growth.freesolution.smooth}, this suffices to end the proof of Lemma \ref{lemma.growth.freesolution.airy}.


	\bibliographystyle{alpha}
	\bibliography{onset_insta}


\end{document}